\title[Domain perturbation for parabolic equations]
      {An abstract approach to domain perturbation for parabolic equations
        and parabolic variational inequalities}
\author{Parinya Sa Ngiamsunthorn}
\address{School of Mathematics and Statistics\\
  University of Sydney, NSW 2006 Australia}
\email{pasa4391@uni.sydney.edu.au}
\subjclass[2000]{35B20, 35K90, 49J40}  
\keywords{boundary value problems; domain perturbation; Mosco convergence;  non-autonomous parabolic equations; parabolic variational inequalities}
\theoremstyle{plain}
\newtheorem{theorem}{Theorem}[section]
\newtheorem{proposition}[theorem]{Proposition}
\newtheorem{lemma}[theorem]{Lemma}
\theoremstyle{definition}
\newtheorem{definition}[theorem]{Definition}
\theoremstyle{remark}
\newtheorem{remark}[theorem]{Remark}
\DeclareMathOperator{\supp}{supp}
\begin{document}
\date{March 10, 2011}
\begin{abstract}
We study the behaviour of solutions of linear non-autonomous parabolic equations subject to Dirichlet 
or Neumann boundary conditions under perturbation of the domain. 
We prove that Mosco convergence of function spaces for non-autonomous parabolic problems is equivalent to Mosco
convergence of function spaces for the corresponding elliptic problems.
As a consequence, we obtain convergence of solutions of non-autonomous parabolic equations under domain perturbation 
by variational methods using the same characterisation of domains as in elliptic case.
A similar technique can be applied to obtain convergence of weak solutions of parabolic variational inequalities when
the underlying convex set is perturbed. 
\end{abstract}
%

\maketitle

\section{Introduction}
\label{sec:intro}

The primary aim of this paper is to study convergence properties 
of solutions of linear \emph{non-autonomous} parabolic equations under perturbation of the domain. 
We consider a sequence of bounded open
sets $(\Omega_n)_{n=1}^\infty$ in $\mathbb R^N$ converging to a bounded
open set $\Omega$ and investigate the behaviour of solutions of 
the following parabolic equations 
\begin{equation}
\label{eq:paraIntro}
  \left\{ 
  \begin{aligned}
    \frac{\partial u}{\partial t} + \mathcal A_n(t) u &= f_n(x,t) &&\quad \text { in } \Omega_n \times (0,T]\\
    \mathcal B_n(t) u &= 0  &&\quad \text{ on } \partial \Omega_n \times (0,T]\\
     u(\cdot,0) &= u_{0,n} &&\quad \text{ in } \Omega_n , \\
  \end{aligned} 
  \right . 
\end{equation}
where $\mathcal A_n$ is an elliptic operator of the form
\begin{displaymath}
 \mathcal A_n(t) u := - \partial_i[a_{ij}(x,t) \partial_j u + a_i(x,t) u]
                     + b_i(x,t) \partial_i u  + c_0(x,t) u,
\end{displaymath}
and $\mathcal B_n(t)$ is one of the following boundary conditions
\begin{displaymath}
\begin{aligned}
 \mathcal B_n(t) u &:= u &&\quad \text{Dirichlet boundary condition} \\ 
 \mathcal B_n(t) u &:= [a_{ij}(x,t) \partial_j u + a_i(x,t)u] \;\nu_i &&\quad
 \text{Neumann boundary condition.}
\end{aligned}
\end{displaymath} 
In abstract form, \eqref{eq:paraIntro} can be written as
\begin{equation}
\label{eq:paraAbsIntro}
 \left \{
 \begin{aligned}
 u'(t) + A_n(t) u &= f(t)  \quad \text{ for } t \in (0,T]\\
 u(0) &= u_{0,n}  
 \end{aligned}
 \right .
\end{equation} 
in a Banach space $V_n$, where $V_n := H^1_0(\Omega_n)$ for Dirichlet problems or $V_n := H^1(\Omega_n)$ for Neumann problems.
We refer to Section \ref{sec:prelim} for the precise framework of these parabolic equations.
We are particularly interested in singular domain perturbation 
so that change of variables is not possible on these domains. 
Typically, the common examples include a sequence of dumbbell shape domains 
with shrinking handle, and a sequence of domains with cracks.  
Moreover, we mostly do not assume any smoothness of $\Omega_n$ 
and $\Omega$.
The second aim of this paper is to study a similar convergence properties
of solutions of parabolic variational inequalities 
\begin{equation}
\label{eq:parVarIneqKIntro}
 \left\{ 
  \begin{aligned}
   \langle u'(t), v-u(t) \rangle  + \langle A(t)u(t), v-u(t) \rangle 
     - \langle f(t) , v - u(t) \rangle &\geq 0, \quad \forall v \in K \\
   u(0) &= u_0.
   \end{aligned}
  \right .
\end{equation}
on $(0,T)$ when we perturb the underlying convex set $K$ in the problem.

To deal with non-autonomous parabolic equations, it is common to apply variational methods. 
In this paper we prove that under suitable assumptions on domains, a sequence of solutions $u_n$ of 
\eqref{eq:paraIntro} converges to the solution $u$ of a linear non-autonomous 
parabolic equation on the limit domain $\Omega$ (\eqref{eq:paraIntro} with $n$ deleted). 
This result sometimes refers to \emph{stability} of solutions under
domain perturbation or \emph{continuity} of solutions with respect to the domain. 
The method presented in this work is rather an abstract approach. In particular, it 
can be applied to obtain stability of solutions under domain perturbation for both Dirichlet problems
(Theorem \ref{th:unifConvSolDir}) and Neumann problems 
(Theorem \ref{th:unifConvSolNeu}). In general, it is more difficult when handling Neumann boundary condition. 
We cannot simply consider the trivial extension by
zero outside the domain for functions in the sobolev space $H^1(\Omega)$ because the
extended function does not belong to $H^1(\mathbb R^N)$. Moreover there
is no smooth extension from  $H^1(\Omega)$ to $H^1(\mathbb R^N)$ as we
do not impose any regularity of the domain. This means the compactness result for a sequence of solutions $u_n$ in 
\cite[Lemma 2.1]{MR1404388} cannot be applied in the case of Neumann
problems. However, our abstract approach can be applied to Neumann problems.
We refer to Section \ref{sec:stabilityParaEq} for the study on stability of solutions of 
non-autonomous parabolic equations under domain perturbation. 

The key result that enables us to determine a sufficient condition on domains 
for which solutions converge under domain perturbation is Theorem
\ref{th:equiMosco}. In particular, Theorem \ref{th:equiMosco} shows that continuity of
solutions for non-autonomous problems can be deduced from the
corresponding elliptic problems via \emph{Mosco convergence}.
We refer to Section \ref{sec:moscoConv} for the definition and results on Mosco convergence. 
A similar deduction is well-known for autonomous parabolic equations.
This is simply because we can apply semigroup methods together with convergence result of 
degenerate semigroups due to Arendt \cite[Theorem 5.2]{MR1832168}.
In  Section 6 of the same paper, stability of solutions of Dirichlet heat equation 
is given as an example. Further examples on other boundary conditions
including Neumann and Robin boundary conditions
can be found in \cite[Section 6]{MR2119988}.  Indeed, for quasilinear parabolic equations, 
Simondon \cite{MR1855977} also obtained continuity of
solutions of parabolic equations under Dirichlet
boundary condition using a similar equivalence of Mosco convergences between
certain Banach spaces. However, Theorem \ref{th:equiMosco}
can be seen as an abstract generalisation of \cite{MR1855977}.
We show equivalence between Mosco convergences of various \emph{closed and convex subsets} of a
Banach space rather than Mosco convergences of a particular choice of closed subspaces of a
Banach space. The obvious reason
for this generalisation is that Mosco convergence was originally introduced in
\cite{MR0298508} for convex sets and was the main tool to establish
convergence properties of solutions of elliptic variational inequalities when the 
convex set is perturbed. The second advantage of Theorem
\ref{th:equiMosco} is that we do not only show the equivalence between
Mosco convergence of convex subsets of the Bochner-Lebesgue space $L^2((0,T),V)$ and
Mosco convergence of convex subsets of the corresponding Banach space
$V$  but also show that they are equivalent to Mosco convergence of
convex subsets of the Bochner-Sobolev spaces $W((0,T),V,V')$. Hence a similar technique can be
applied to obtain stability of solutions of parabolic variational inequalities when the underlying
convex set is perturbed (Theorem \ref{th:strongConvSolParVar}). We study convergence of solutions
of parabolic variational inequalities in Section \ref{sec:appParVar}.

An important consequence of Theorem \ref{th:equiMosco} is that the same conditions for a sequence of domains give
stability of solutions under domain perturbation for both parabolic
and elliptic equations. We refer to \cite{MR1822408, MR1951783, MR1995490, MR1955096} for the study of domain perturbation
for elliptic equations using Mosco convergence.

\section{Preliminaries on  parabolic equations and parabolic variational
inequalities}
\label{sec:prelim}

In this section we state some basic results on variational methods
for parabolic equations and a variational formulation for parabolic 
inequalities.

Suppose $V$ is a real separable and reflexive Banach space and $H$ is a
separable Hilbert space such that $V$ is dense in $H$. By identifying
$H$ with its dual space $H'$, we consider the following \emph{evolution triple}
\begin{displaymath}
 V \overset{d}{\hookrightarrow} H \overset{d}{\hookrightarrow} V'.
\end{displaymath}
Throughout this paper, we denote by $(\cdot|\cdot)$, the scalar 
product in $H$ and $\langle \cdot,\cdot \rangle$, the duality paring
between $V'$ and $V$. 
For an interval $(a,b) \subset \mathbb R$, 
we denote by $L^2((a,b),V)$ the Bochner-Lebesgue space. 
We define the Bochner-Sobolev space
\begin{displaymath}
W((a,b),V,V') := \{ u \in L^2((a,b),V) : u'\in L^2((a,b),V') \},
\end{displaymath}
where $u'$ is the derivative in the sense of distributions taking
values in $V'$. The space $W((a,b),V,V')$ is a Banach space when
equipped with the following norm
\begin{displaymath}
\|u\|_W := \Big (\int_a^b \|u(t)\|^2_V \;dt + \int_a^b \|u'(t)\|^2_{V'} \;dt
\Big )^{1/2}.
\end{displaymath}
It is well known that 
$W((a,b),V,V') \hookrightarrow  C([a,b],H)$, where the
space of $H$-valued continuous functions $C([a,b],H)$ equipped 
with the uniform norm (\cite[Theorem I1.3.1]{MR1156075}). Moreover, for $u,v \in W((a,b),V,V')$ and $a_0,
b_0 \in [a,b]$ with $a_0 < b_0$ we have the integration by parts formula
\begin{equation}
\label{eq:intByParts}
(u(b_0)|v(b_0))-(u(a_0)|v(a_0)) 
= \int_{a_0}^{b_0} \langle u'(t),v(t) \rangle + \langle v'(t),u(t) \rangle \;dt.
\end{equation}
Let $I,J$ be two sets, we write $J \subset \subset I$ if $\bar{J} \subset I$. 
For a subset $X$ of a Banach space $V$, we define the closed convex hull by
\begin{displaymath}
 \overline{\text{conv}} (X) := \overline{ \left \{ \sum_{i=1}^k \alpha_i x_i \mid x_i \in X,
                              \alpha_i \in \mathbb R, \alpha_i \geq 0, 
                               \sum_{i=1}^k \alpha_i =1, k =1,2, \ldots \right \}}.
\end{displaymath}

For each $t \in [0,T]$, suppose $a(t;\cdot,\cdot)$ is a continuous
bilinear form on $V$ satisfying the following hypothesis:
\begin{itemize}
 \item for every $u,v \in V$, the map $t \mapsto a(t;u,v)$ is
 measurable.
 \item there exists a constant $M >0$ independent of $t \in [0,T]$
 such that
   \begin{equation}
    \label{eq:biFormCont}
    |a(t;u,v)| \leq M \|u\|_V \|v\|_V ,
   \end{equation}
 for all $u,v \in V$.
 \item there exist $\alpha >0$ and $\lambda \in \mathbb R$ such that
   \begin{equation}
    \label{eq:biFormCoer}
    a(t;u,u) + \lambda \|u\|^2_H \geq \alpha \|u\|^2_V ,
   \end{equation}
 for all $u \in V$.
\end{itemize}
It follows that for each $t \in [0,T]$ and $u \in V$ the bilinear form
$a(t;\cdot,\cdot)$ induces a continuous linear operator
$A(t) \in \mathscr L(V,V')$ with
\begin{displaymath}
 \langle A(t)u, v \rangle = a(t;u,v) ,
\end{displaymath}
for all $u,v \in V$. We easily see from \eqref{eq:biFormCont} that
$\sup_{t \in [0,T]} \|A(t)\|_{\mathscr L(V,V')} \leq M$.

\subsection{Parabolic equations}
\label{subsec:varPara}
Let us consider the abstract parabolic equation
\begin{equation}
\label{eq:paraAbs}
  \left \{
  \begin{aligned}
   u'(t) + A(t) u &= f(t) \quad \text{ for } t \in (0,T] \\
   u(0) &= u_0 ,\\
  \end{aligned}
  \right .
\end{equation}
where $u_0 \in H$ and $f \in L^2((0,T),V')$.
A function $u \in W(0,T,V,V')$ satisfying \eqref{eq:paraAbs} 
is called a \emph{variational solution}.  
It is well known that $u$ is a variational solution of \eqref{eq:paraAbs}
if and only if $u \in L^2((0,T),V)$ and
\begin{equation}
\label{eq:solFormula}
 \begin{aligned}
  - \int_0^T (u(t)|v) \phi'(t) \;dt + \int_0^T a(t;u(t),v) \phi(t) \;dt \\
  = (u_0|v) \phi(0) + \int_0^T \langle f(t), v \rangle \phi(t) \;dt,
 \end{aligned}
\end{equation}
for all $v \in V$ and for all $\phi \in \mathscr D([0,T))$. 
The existence and uniqueness of solution is given in the following
theorem (see, for example, \cite[XVIII \S 3]{MR1156075} and \cite[\S
23.7]{MR1033497}). 
\begin{theorem}
\label{th:ExiUniParEq}
Given $f \in L^2((0,T),V')$ and $u_0 \in H$,
there exists a unique variational solution of \eqref{eq:paraAbs}
satisfies 
\begin{equation}
\label{eq:unifBoundSol}
 \|u \|_{W(0,T,V,V')} \leq C \Big (\|u_0 \|_H + \|f\|_{L^2((0,T),V')}
 \Big ).
\end{equation}
Moreover, if $\lambda =0$ in \eqref{eq:biFormCoer} the variational solution satisfies
\begin{equation}
\label{eq:boundOfSol}
 \|u(t)\|^2_H + \alpha \int_0^t \|u(s)\|^2_V \;ds
 \leq \|u_0\|^2_H + \alpha^{-1} \int_0^t \|f(s)\|^2_{V'} \;ds,
\end{equation}
for all $t \in [0,T]$.
\end{theorem}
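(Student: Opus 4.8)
The plan is to establish existence via a Galerkin approximation, derive uniform a priori estimates that yield both weak compactness and the claimed bounds, and then pass to the limit and verify uniqueness. First I would fix a sequence $(w_k)_{k=1}^\infty$ in $V$ whose finite linear spans are dense in $V$; since $V \hookrightarrow H$ densely, these spans are also dense in $H$. For each $m$ I would seek an approximate solution $u_m(t) = \sum_{k=1}^m g_{mk}(t) w_k$ satisfying, for $j = 1,\dots,m$, the finite system
\begin{displaymath}
 (u_m'(t)|w_j) + a(t;u_m(t),w_j) = \langle f(t),w_j \rangle, \qquad u_m(0) = u_{0,m},
\end{displaymath}
where $u_{0,m}$ is the orthogonal projection of $u_0$ onto $\mathrm{span}\{w_1,\dots,w_m\}$ in $H$, so that $u_{0,m} \to u_0$ in $H$. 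Because $a(t;\cdot,\cdot)$ is measurable in $t$ and bounded by \eqref{eq:biFormCont}, this is a linear ODE system with $L^\infty$ (in fact $L^2$) coefficients, and Carath\'eodory theory gives a unique absolutely continuous solution $(g_{mk})$ on $[0,T]$.

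The heart of the argument is the a priori estimate. Multiplying the $j$-th equation by $g_{mj}(t)$ and summing over $j$ gives $(u_m'(t)|u_m(t)) + a(t;u_m(t),u_m(t)) = \langle f(t),u_m(t)\rangle$, hence $\tfrac12 \tfrac{d}{dt}\|u_m(t)\|_H^2 + a(t;u_m(t),u_m(t)) = \langle f(t),u_m(t)\rangle$. Applying the coercivity bound \eqref{eq:biFormCoer}, estimating the right-hand side by Young's inequality as $\langle f,u_m\rangle \le \tfrac{\alpha}{2}\|u_m\|_V^2 + \tfrac{1}{2\alpha}\|f\|_{V'}^2$, and integrating on $(0,t)$, I would obtain
\begin{displaymath}
 \|u_m(t)\|_H^2 + \alpha \int_0^t \|u_m(s)\|_V^2 \, ds \le \|u_{0,m}\|_H^2 + 2\lambda \int_0^t \|u_m(s)\|_H^2\,ds + \alpha^{-1}\int_0^t \|f(s)\|_{V'}^2\,ds.
\end{displaymath}
When $\lambda = 0$ this is already the sharp estimate \eqref{eq:boundOfSol}; for general $\lambda$, Gronwall's inequality turns it into a bound on $\sup_t \|u_m(t)\|_H^2 + \int_0^T \|u_m\|_V^2$ by $C(\|u_0\|_H^2 + \|f\|_{L^2(V')}^2)$, uniformly in $m$. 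This controls $(u_m)$ in $L^2((0,T),V) \cap L^\infty((0,T),H)$. To control the time derivative I would use $\|A(t)\|_{\mathscr L(V,V')} \le M$ to bound $A(\cdot)u_m$ in $L^2((0,T),V')$ and then estimate $u_m'$ in $L^2((0,T),V')$; here one must be mildly careful, since $u_m'(t)$ only satisfies the equation after projection onto the $m$-dimensional space, so I would test against the projection $P_m v$ and use $\|P_m\|_{\mathscr L(V,V)}$ boundedness, the main technical nuisance of the proof.

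With these uniform bounds, reflexivity of $L^2((0,T),V)$ and $L^2((0,T),V')$ lets me extract a subsequence with $u_m \rightharpoonup u$ in $L^2((0,T),V)$ and $u_m' \rightharpoonup \chi$ in $L^2((0,T),V')$; a standard argument identifies $\chi = u'$, so $u \in W(0,T,V,V')$. Passing to the weak limit in the Galerkin equations tested against $w_j \phi(t)$ for $\phi \in \mathscr D([0,T))$, using the integration by parts formula \eqref{eq:intByParts} to transfer the time derivative onto $\phi$ and recover the initial datum via $u_{0,m} \to u_0$, I would verify the weak formulation \eqref{eq:solFormula} for every basis function and hence, by density, for all $v \in V$; thus $u$ is a variational solution. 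The bound \eqref{eq:unifBoundSol} follows by passing the uniform Galerkin estimates to the limit under weak lower semicontinuity of the norms, and \eqref{eq:boundOfSol} in the case $\lambda = 0$ follows the same way using the continuity of $t \mapsto u(t) \in H$ granted by $W(0,T,V,V') \hookrightarrow C([0,T],H)$. Finally, uniqueness is the easiest step: if $u_1,u_2$ are two solutions then $w = u_1 - u_2$ solves the homogeneous problem with $w(0)=0$, and testing with $w$ itself via \eqref{eq:intByParts} gives $\tfrac12\|w(t)\|_H^2 + \int_0^t a(s;w,w)\,ds = 0$; coercivity and Gronwall then force $w \equiv 0$.
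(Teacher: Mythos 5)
Your Galerkin proof is correct and is essentially the argument the paper relies on: the paper does not prove Theorem \ref{th:ExiUniParEq} itself but refers to \cite[XVIII \S 3]{MR1156075} and \cite[\S 23.7]{MR1033497}, both of which establish existence by exactly this Faedo--Galerkin scheme with the same energy estimate, weak-compactness limit passage, and Gronwall uniqueness, and when $\lambda=0$ your integrated estimate is precisely \eqref{eq:boundOfSol}. The one point worth tidying is the uniform bound on $u_m'$: rather than relying on $V$-boundedness of the projections $P_m$ (which is not automatic for an arbitrary Galerkin basis), it is cleaner to pass to the limit in the tested equations first, obtain \eqref{eq:solFormula}, and then read off $u' = f - A(\cdot)u \in L^2((0,T),V')$ a posteriori, which yields \eqref{eq:unifBoundSol} at once.
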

Note that $v(t) := e^{-\lambda t}u(t)$ is a variational solution of \eqref{eq:paraAbs} with
$A(t)$ replaced by $A(t) + \lambda$. Hence
we can assume without loss of generality that $\lambda = 0$ in
\eqref{eq:biFormCoer}.

Let $\Omega$ be an open bounded set in $\mathbb R^N$.
Let $D \subset \mathbb R^N$ be a ball such that $\Omega \subset D$.
We shall consider a closed subspace $V$ of $H^1(\Omega)$ with
$H^1_0(\Omega) \subset V \subset H^1(\Omega)$. We take $H:= L^2(\Omega)$
and consider the evolution triple 
$ V \overset{d}{\hookrightarrow} H \overset{d}{\hookrightarrow} V'$.
In this paper, we study  bilinear forms $a(t;\cdot,\cdot)$
for $t \in [0,T]$ given by
\begin{equation}
\label{eq:biFormEx}
 a(t;u,v) := \int_{\Omega}[ a_{ij}(x,t) \partial_j u+ a_i(x,t)u] \partial_i v
                + b_i(x,t) \partial_i u v + c_0(x,t) uv \;dx,
\end{equation}
for $u,v \in V$. In the above, we use summation convention with $i,j$
running from $1$ to $N$. Also, we assume  $a_{ij}, a_i, b_i, c_0$ are
functions in 
$L^{\infty}(D \times (0,T))$ and there exists a constant $\alpha > 0$
independent of $(x,t) \in \Omega \times (0,T)$ such that
\begin{displaymath}
a_{ij}(x,t) \xi_i \xi_j \geq \alpha |\xi|^2 ,
\end{displaymath} 
for all $\xi \in \mathbb R^N$. It is clear that the map $t \mapsto a(t;u,v)$ is
measurable for all $u,v \in V$. Moreover, it can be verified that the form
$a(t;\cdot,\cdot)$ defined above satisfies \eqref{eq:biFormCont} and
\eqref{eq:biFormCoer} (see \cite{MR1156075}). 
Let $\mathcal A(t)$ be a differential operator on $V$ defined by
\begin{equation}
\label{eq:calAop}
 \mathcal A(t) u := - \partial_i[a_{ij}(x,t) \partial_j u + a_i(x,t) u]
                     + b_i(x,t) \partial_i u  + c_0(x,t) u .
\end{equation}
Given $u_0 \in L^2(D)$ and $f \in L^2(D \times (0,T))$, we consider the
following parabolic boundary value problem
\begin{equation}
\label{eq:para}
  \left\{ 
  \begin{aligned}
    \frac{\partial u}{\partial t} + \mathcal A(t) u &= f(x,t) &&\quad \text { in } \Omega \times (0,T]\\
    \mathcal B(t) u &= 0  &&\quad \text{ on } \partial \Omega \times (0,T]\\
     u(\cdot,0) &= u_0 &&\quad \text{ in } \Omega , \\
  \end{aligned} 
  \right . 
\end{equation}
where $\mathcal B(t)$ is one of the following boundary conditions
\begin{displaymath}
\begin{aligned}
 \mathcal B(t) u &:= u &&\quad \text{Dirichlet boundary condition} \\ 
 \mathcal B(t) u &:= [a_{ij}(x,t) \partial_j u + a_i(x,t)u] \;\nu_i &&\quad
 \text{Neumann boundary condition}
\end{aligned}
\end{displaymath}
It is well known that we can consider the boundary value problem 
\eqref{eq:para} as an abstract equation \eqref{eq:paraAbs}
by taking  $V = H^1_0(\Omega)$ for Dirichlet boundary
problem or $V=H^1(\Omega)$ for Neumann boundary problem 
(\cite[Corollary 23.24]{MR1033497}).

\subsection{Parabolic variational inequalities}
Suppose that $K$ is a closed and convex subset of $V$.
We denote by 
\begin{displaymath}
L^2((0,T), K) := \{ u \in L^2((0,T),V) \mid u(t) \in K \text{ a.e.}\}.
\end{displaymath}
For each $t \in (0,T)$, suppose $a(t; \cdot, \cdot)$ is a continuous 
bilinear form on $V$ satisfying \eqref{eq:biFormCont} and \eqref{eq:biFormCoer}.
As before, we denote the induced linear operator by $A(t)$.
Given $u_0 \in K$ and $f \in L^2((0,T),V')$, we wish to find $u$ such that
for a.e. $t \in (0,T)$, $u(t) \in K$ and 
\begin{equation}
\label{eq:parVarIneqK}
 \left\{ 
  \begin{aligned}
   \langle u'(t), v-u(t) \rangle  + \langle A(t)u(t), v-u(t) \rangle 
     - \langle f(t) , v - u(t) \rangle &\geq 0, \quad \forall v \in K \\
   u(0) &= u_0.
   \end{aligned}
  \right .
\end{equation}
A function $u \in W((0,T),V,V')$ satisfying \eqref{eq:parVarIneqK} is called a \emph{strong
  solution} of parabolic variational inequality \eqref{eq:parVarIneqK}. In this paper, we are
mainly interested in a weak formulation of the problem.
There are various (slightly different) definitions of weak solution of 
parabolic variational inequalities 
(see e.g.\cite{MR0428137},\cite{MR2210083},\cite{MR0259693}, \cite{MR0296479}).
We shall define a weak notion of solution similar to the one in
\cite{MR2210083} as follows.
\begin{definition}
\label{def:weakSolIneq}
A function $u$ is a \emph{weak solution} of parabolic
variational inequality \eqref{eq:parVarIneqK} if $u \in L^2((0,T),K)$ and
\begin{equation}
\label{eq:weakSolParVar}
\begin{aligned}
 \int_0^T \langle v'(t), v(t)-u(t) \rangle
   + \langle A(t)u(t), v(t)-u(t) \rangle  
   - \langle f(t), v(t)-u(t) \rangle \;dt\\
   + \frac{1}{2} \|v(0)-u_0\|^2_H
  \geq 0,
\end{aligned}
\end{equation}
for all $v \in W((0,T),V,V') \cap L^2((0,T),K)$.
\end{definition}
The existence and uniqueness of weak solutions of parabolic
variational inequalities have been studied by various authors 
according to their definitions. 
In our case, we can state the result in the following theorem.
\begin{theorem}
Given $u_0 \in K$ and $f \in L^2((0,T),V')$. There exists a unique weak
solution $u$ of the parabolic variational inequality \eqref{eq:parVarIneqK} satisfying $u \in
L^{\infty} ((0,T),H)$.
\end{theorem}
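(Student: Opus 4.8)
As remarked after Theorem~\ref{th:ExiUniParEq}, I may assume $\lambda = 0$ in \eqref{eq:biFormCoer}, so that $a(t;u,u) \ge \alpha\|u\|_V^2$. The plan is to obtain existence by penalisation and uniqueness by a time-regularisation argument that turns a weak solution into an admissible test function in \eqref{eq:weakSolParVar}.

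For existence, fix a penalty operator $\beta$ associated with $K$, that is, a bounded monotone hemicontinuous map with $\beta(v)=0$ if and only if $v\in K$, and for $\epsilon>0$ consider the penalised Cauchy problem
\[
u_\epsilon'(t)+A(t)u_\epsilon(t)+\tfrac1\epsilon\beta(u_\epsilon(t))=f(t),\qquad u_\epsilon(0)=u_0 .
\]
Since $\tfrac1\epsilon\beta$ is a bounded monotone perturbation of the linear $A(t)$, a unique $u_\epsilon\in W((0,T),V,V')$ exists by the theory of monotone parabolic operators (or, when $\beta$ is Lipschitz, by a fixed-point argument based on Theorem~\ref{th:ExiUniParEq}). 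Testing the equation against $u_\epsilon-u_0$ and using $\beta(u_0)=0$ together with the monotonicity of $\beta$ to discard the penalty term with a favourable sign, the energy estimate \eqref{eq:boundOfSol} provides bounds for $u_\epsilon$ in $L^2((0,T),V)\cap L^\infty((0,T),H)$ uniform in $\epsilon$, together with $\int_0^T\langle\beta(u_\epsilon),u_\epsilon-u_0\rangle\,dt=O(\epsilon)$.

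Next I would extract a subsequence with $u_\epsilon\rightharpoonup u$ in $L^2((0,T),V)$ and weakly-$*$ in $L^\infty((0,T),H)$; the bound on the penalty term forces the limit into the weakly closed convex set $L^2((0,T),K)$. To recover \eqref{eq:weakSolParVar}, I integrate the penalised equation against $v-u_\epsilon$ for a fixed $v\in W((0,T),V,V')\cap L^2((0,T),K)$, transfer the derivative from $u_\epsilon$ to $v$ by the integration-by-parts formula \eqref{eq:intByParts}, and use $\tfrac1\epsilon\langle\beta(u_\epsilon),v-u_\epsilon\rangle\le 0$ (monotonicity, $\beta(v)=0$) and $-\tfrac12\|u_\epsilon(T)-v(T)\|_H^2\le 0$ to obtain, for each $\epsilon$,
\[
\int_0^T\langle v',v-u_\epsilon\rangle+\langle A(t)u_\epsilon,v-u_\epsilon\rangle-\langle f,v-u_\epsilon\rangle\,dt+\tfrac12\|v(0)-u_0\|_H^2\ge 0 .
\]
Passing to the limit, the only delicate term is $\int_0^T a(t;u_\epsilon,u_\epsilon)\,dt$, which appears with a minus sign; its weak lower semicontinuity follows from coercivity, since $u\mapsto\int_0^T a(t;u(t),u(t))\,dt$ is the quadratic form of the symmetric part of $a(t;\cdot,\cdot)$, which is nonnegative and hence convex and weakly lower semicontinuous on $L^2((0,T),V)$. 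This yields \eqref{eq:weakSolParVar}, and the uniform $L^\infty((0,T),H)$ bound passes to the limit, giving $u\in L^\infty((0,T),H)$.

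For uniqueness, let $u_1,u_2$ be two weak solutions. The difficulty is that a weak solution need not belong to $W((0,T),V,V')$, so neither may be inserted as the test function $v$ for the other. I would repair this by replacing $u_i$ with a time-mollification (Steklov average) $u_i^h$, which lies in $W((0,T),V,V')$ and, by convexity and closedness of $K$, still takes values in $K$; using $u_2^h$ and $u_1^h$ as test functions in the inequalities for $u_1$ and $u_2$, adding, and letting $h\to0$, the coercivity bound $\langle A(t)(u_1-u_2),u_1-u_2\rangle\ge\alpha\|u_1-u_2\|_V^2$ combined with the integration-by-parts identity \eqref{eq:intByParts} then forces $u_1=u_2$. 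I expect this regularisation — keeping the mollified solutions inside $K$, controlling the cross terms $\langle v',v-u_i\rangle$ whose naive limit need not exist, and keeping track of the initial term $\tfrac12\|v(0)-u_0\|_H^2$ — to be the main technical obstacle, precisely because it is where the weak formulation's use of $v'$ in place of $u'$ must be undone.
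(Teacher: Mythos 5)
The paper does not actually write out a proof of this theorem: existence is delegated to the results of \cite{MR0259693} and uniqueness to the argument of \cite[Theorem 2.3]{MR0296479}. Your existence half is the standard penalisation argument underlying that citation, and the points you single out are the right ones: the penalty term is discarded by monotonicity and $\beta(u_0)=0$, the quadratic functional $u\mapsto\int_0^T a(t;u(t),u(t))\,dt$ is weakly lower semicontinuous because (with $\lambda=0$) its symmetric part is nonnegative and hence convex, and the uniform $L^\infty((0,T),H)$ bound survives the passage to the limit. This half is sound.

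The uniqueness half is where you diverge from the cited method, and the divergence is where the gap sits. The reference --- and the paper itself, in the proof of Theorem \ref{th:strongConvSolParVar} --- regularises a weak solution $u$ by solving $\epsilon u_\epsilon'+u_\epsilon=u$, $u_\epsilon(0)=u_0$. This has exactly the properties your Steklov average $u^h$ lacks: $u_\epsilon(t)$ is an explicit convex combination of $u_0\in K$ and of values $u(s)\in K$, so $u_\epsilon\in W((0,T),V,V')\cap L^2((0,T),K)$ \emph{and} $u_\epsilon(0)=u_0$ exactly, which annihilates the term $\tfrac12\|v(0)-u_0\|_H^2$; moreover the cross term is sign-definite, $\int_0^T\langle u_\epsilon',u_\epsilon-u\rangle\,dt=-\epsilon\int_0^T\|u_\epsilon'(t)\|_H^2\,dt\le 0$, so no delicate limit of the $\langle v',\cdot\rangle$ terms is required. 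With a Steklov average, $u^h(0)=\tfrac1h\int_0^h u_2(s)\,ds$, and there is no reason this converges to $u_0$ in $H$: a weak solution is only known to lie in $L^2((0,T),K)\cap L^\infty((0,T),H)$, with no continuity into $H$ at $t=0$, and the initial datum enters the weak formulation \emph{only} through the term $\tfrac12\|v(0)-u_0\|_H^2$ that you need to make vanish. Since that term acts against the conclusion $u_1=u_2$ when the two inequalities are added, leaving it uncontrolled means the argument does not close. You correctly flag this as the main technical obstacle, but it is resolved by choosing the ODE regularisation with prescribed initial value rather than by mollification; as written, the uniqueness part is incomplete.
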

Note that the existence of our weak solution follows immediately from the 
existence results in \cite[Theorem 6.2]{MR0259693}. The uniqueness can be proved in the same way 
as in \cite[Theorem 2.3]{MR0296479}.
\section{Mosco convergence}
\label{sec:moscoConv}

We often consider Mosco convergence as introduced in \cite{MR0298508} when dealing with a sequence 
of functions belonging to a sequence of function spaces. 
In this section, we establish the key result which enables us to study 
domain perturbation for parabolic problems via the corresponding elliptic
problems. We prove that Mosco convergence of function spaces for 
non-autonomous parabolic problems is equivalent to Mosco convergence of function
spaces for the corresponding elliptic problems.
Throughout this section, we assume that
$V$ is a reflexive and separable Banach space, and $K_n, K$  are closed
and convex subsets of $V$. We start by giving a definition of Mosco convergence
in various spaces including $V$, $L^2((0,T),V)$ and $W((0,T),V,V')$. 
\begin{definition}
\label{def:moscoEll}
We say that $K_n$ converges to $K$ in the sense of Mosco if the following conditions hold
\begin{itemize}
 \item[$(M1)$] For every $u \in K$ there exists a sequence 
       $u_n \in K_n$ such that 
       $u_n \rightarrow u$ strongly in $V$.
 \item[$(M2)$] If $(n_k)$ is a sequence of indices converging to $\infty$, 
      $(u_k)$ is a sequence such that $u_k \in K_{n_k}$ for 
      every $k$ and 
      $u_n \rightharpoonup u$ weakly in $V$, then
      $ u \in K$.
\end{itemize}
\end{definition}
There is an alternative definition of Mosco convergence defined in terms
of Kuratowski limits. A general result on Mosco convergence and 
equivalence of these definitions can be found in \cite[Chapter 3]{MR773850}.

As discussed in Section \ref{sec:prelim}, solutions of 
parabolic equations and parabolic variational inequalities are functions in $L^2((0,T),V)$. 
Thus, it is worthwhile to study Mosco convergence in $L^2((0,T),V)$.
We denote by
\begin{displaymath}
L^2((0,T), K) := \{ u \in L^2((0,T),V) \mid u(t) \in K \text{ a.e.}\},
\end{displaymath}
and 
\begin{displaymath}
C([0,T],K) := \{ u \in C([0,T],V) \mid u(t) \in K \quad \forall t \in [0,T] \}.
\end{displaymath}
It can be verified that $L^2((0,T),K)$ is a closed and convex subset of
$L^2((0,T),V)$. We next state Mosco convergence of function spaces
for parabolic problems.
\begin{definition}
\label{def:moscoPara}
We say that $L^2((0,T),K_n)$ converges to
$L^2((0,T),K)$ in the sense of Mosco if the following conditions hold
\begin{itemize}
 \item[$(M1')$] For every $u \in L^2((0,T),K)$ there exists a sequence 
       $u_n \in L^2((0,T),K_n)$ such that 
       $u_n \rightarrow u$ strongly in $L^2((0,T),V)$.
 \item[$(M2')$] If $(n_k)$ is a sequence of indices converging to $\infty$, 
      $(u_k)$ is a sequence such that $u_k \in L^2((0,T),K_{n_k})$ for 
      every $k$ and 
      $u_k \rightharpoonup  u$ weakly in $L^2((0,T),V)$, then
      $ u \in L^2((0,T),K)$.
\end{itemize}
\end{definition}
It is also useful to define a similar Mosco convergence in $W((0,T),V,V')$ 
when studying domain perturbation for parabolic variational inequalities.
\begin{definition}
\label{def:moscoPara2}
We say that $W((0,T),V,V') \cap L^2((0,T),K_n)$ converges to \linebreak
$W((0,T),V,V') \cap L^2((0,T),K)$ in the sense of Mosco if the following conditions hold
\begin{itemize}
 \item[$(M1")$] for every $u \in W((0,T),V,V') \cap L^2((0,T),K)$ there exists a sequence 
       $u_n \in W((0,T),V,V') \cap L^2((0,T),K_n)$ such that $u_n$ converges strongly to $u$ 
       in $W((0,T),V,V')$.
 \item[$(M2")$] if $(n_k)$ is a sequence of indices converging to $\infty$, 
      $(u_k)$ is a sequence such that $u_k \in W((0,T),V,V') \cap L^2((0,T),K_{n_k})$ for 
      every $k$, 
       $u_k \rightharpoonup u$ weakly in $L^2((0,T),V)$ and $u'_k \rightharpoonup w$ weakly in $L^2((0,T),V')$, then
      $u' = w$ and $u \in W((0,T),V,V') \cap L^2((0,T),K)$.
\end{itemize}
\end{definition}

The following theorem is the key result of this paper.
\begin{theorem}
\label{th:equiMosco}
 The following assertions are equivalent:
 \begin{itemize}
 \item[\upshape(i)] 
  $K_n$ converges to $K$ in the sense of Mosco.
 \item[\upshape(ii)]
  $L^2((0,T),K_n)$ converges to $L^2((0,T),K)$ in the sense of Mosco.
 \item[\upshape(iii)]
  $W((0,T),V,V') \cap L^2((0,T),K_n)$ converges  to
  $W((0,T),V,V') \cap L^2((0,T),K)$ in the sense of Mosco.
 \end{itemize}
\end{theorem}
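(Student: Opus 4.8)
The plan is to prove the cycle of implications $(i) \Rightarrow (ii) \Rightarrow (iii) \Rightarrow (i)$, exploiting the fact that each Mosco convergence splits into a \emph{recovery} condition ($M1$) and a \emph{lower-semicontinuity} condition ($M2$). The guiding principle is that Mosco convergence on the pointwise level $V$ should transfer to the Bochner spaces via the natural density of simple (or step) functions, while the reverse direction is recovered by testing with constant-in-time functions. I would treat the $M1$-type and $M2$-type conditions separately at each stage, since they require different tools.

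For $(i) \Rightarrow (ii)$, the $M2'$ condition is the easier half: if $u_k \in L^2((0,T),K_{n_k})$ and $u_k \rightharpoonup u$ weakly in $L^2((0,T),V)$, I want to conclude $u(t) \in K$ a.e. Here I would use that $L^2((0,T),K)$ is closed and convex, hence weakly closed, so it suffices to show $u$ lies in it; the standard device is Mazur's lemma to pass to strong convergence of convex combinations and extract an a.e.-convergent subsequence, then apply $(M2)$ pointwise in $t$ (the convex combinations still land in the appropriate $K_{n_k}$). The $M1'$ condition is more delicate: given $u \in L^2((0,T),K)$ I must produce a recovery sequence $u_n \in L^2((0,T),K_n)$ converging strongly. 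The natural approach is to first approximate $u$ by a step function $\sum_j \chi_{E_j} x_j$ with $x_j \in K$, apply $(M1)$ to each value $x_j$ to get $x_j^n \in K_n$ with $x_j^n \to x_j$ in $V$, and reassemble; a diagonal argument over a sequence of finer step-function approximations then yields the claim. The measurability and the fact that the recovered step functions lie in $L^2((0,T),K_n)$ need care but are routine given convexity of $K_n$.

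For $(ii) \Rightarrow (iii)$, the content is that adding the derivative constraint and strengthening the topology to $W((0,T),V,V')$ preserves Mosco convergence. The $M2''$ condition is nearly immediate: the hypotheses give $u_k \rightharpoonup u$ in $L^2((0,T),V)$ and $u_k' \rightharpoonup w$ in $L^2((0,T),V')$, and a standard closedness argument for the distributional derivative (pairing against test functions $\phi \in \mathscr D(0,T)$) forces $u' = w$; then $u \in L^2((0,T),K)$ follows from $(M2')$, so $u \in W((0,T),V,V') \cap L^2((0,T),K)$. The harder half is $M1''$: given $u \in W((0,T),V,V') \cap L^2((0,T),K)$ I need a recovery sequence converging \emph{in the $W$-norm}, i.e. controlling both $u_n \to u$ in $L^2((0,T),V)$ and $u_n' \to u'$ in $L^2((0,T),V')$ simultaneously, while keeping $u_n(t) \in K_n$. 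This is where I expect the main obstacle. A crude step-function recovery has no derivative control, so I anticipate needing a smoothing/mollification in time combined with the pointwise $(M1)$ recovery, arranged so that the time-derivative of the recovered function approximates $u'$; convexity of $K_n$ is essential to keep mollified functions inside $L^2((0,T),K_n)$.

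Finally, $(iii) \Rightarrow (i)$ should be obtained by specialising to time-independent functions. For $(M2)$, given $u_k \in K_{n_k}$ with $u_k \rightharpoonup u$ in $V$, I consider the constant functions $t \mapsto u_k$, which lie in $W((0,T),V,V') \cap L^2((0,T),K_{n_k})$ with zero derivative; these converge weakly in $L^2((0,T),V)$ to the constant $u$ and their derivatives converge weakly (to $0$) in $L^2((0,T),V')$, so $(M2'')$ gives that the constant $u$ lies in $W((0,T),V,V') \cap L^2((0,T),K)$, whence $u \in K$. For $(M1)$, given $u \in K$ the constant function lies in the target space, so $(M1'')$ provides $u_n \in W((0,T),V,V') \cap L^2((0,T),K_n)$ with $u_n \to u$ (the constant) in the $W$-norm, in particular in $L^2((0,T),V)$; evaluating at a suitable time (or averaging over $(0,T)$ and using convexity and closedness of $K_n$) extracts an element of $K_n$ converging strongly to $u$ in $V$, establishing $(M1)$. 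Throughout, I would rely on reflexivity and separability of $V$, the embedding $W \hookrightarrow C([0,T],H)$ recorded in the preliminaries, and the convexity/closedness of the $K_n$ and $K$.
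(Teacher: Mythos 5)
Your overall architecture (a cycle of implications, constant-in-time functions for the downward directions, time-mollification to upgrade an $L^2$ recovery sequence to a $W$-recovery sequence) is sound and close in spirit to the paper's, but there is a genuine gap in your proof of $(M2)\Rightarrow(M2')$. You propose to apply Mazur's lemma to the weakly convergent sequence $u_k\in L^2((0,T),K_{n_k})$ and assert that ``the convex combinations still land in the appropriate $K_{n_k}$.'' They do not: a Mazur combination $\sum_j\lambda_j u_{k_j}(t)$ mixes values taken in \emph{different} sets $K_{n_{k_1}},K_{n_{k_2}},\dots$, and since these are distinct closed convex sets there is in general no single index $m$ with $\sum_j\lambda_j u_{k_j}(t)\in K_m$. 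Consequently the hypothesis of $(M2)$ --- a sequence $v_k\in K_{m_k}$ with $m_k\to\infty$ converging weakly in $V$ --- is never produced, and the pointwise application of $(M2)$ has nothing to act on. (Nor can you bypass Mazur by arguing at a.e.\ fixed $t$ directly: weak convergence in $L^2((0,T),V)$ gives no pointwise weak convergence $u_k(t)\rightharpoonup u(t)$, even along a subsequence.) The weak closedness of $L^2((0,T),K)$ does not help either, since the $u_k$ are not elements of that set.

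The device that makes this step work, and which the paper uses, is to integrate against a fixed test function: for $\phi\in\mathscr D((0,T))$ with $\phi\ge 0$ and $\int_0^T\phi\,dt=1$, the vector $\int_0^T u_k(t)\phi(t)\,dt$ lies in the closed convex hull of the values of $u_k$, hence in the \emph{single} closed convex set $K_{n_k}$, and it converges weakly in $V$ to $\int_0^T u(t)\phi(t)\,dt$; now $(M2)$ applies and gives $\int_0^T u\phi\,dt\in K$. Letting $\phi$ run over approximate identities centred at Lebesgue points of $u$ (after extending $u_k$ and $u$ beyond the endpoints by a fixed element of $K_{n_k}$, respectively $K$) recovers $u(t)\in K$ a.e. You need this, or some other construction that produces for a.e.\ $t$ a weakly convergent sequence whose $k$-th term stays inside a single $K_{m_k}$, to close the gap. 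The rest of your plan is workable: the step-function recovery for $(M1')$ is a legitimate and more elementary alternative to the paper's continuous partition-of-unity construction; the mollification idea for $(M1'')$ is correct provided you handle the endpoints (the paper stretches the time interval first) and provided you use $(M1')$ rather than $(M1)$ inside the step $(ii)\Rightarrow(iii)$, to avoid circularity in your cycle; and the constant-function argument for $(iii)\Rightarrow(i)$ matches the paper's.
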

Before proving the equivalence of Mosco convergences in Theorem
\ref{th:equiMosco}, we require some technical lemmas. 
\begin{lemma}
\label{lem:convInt}
For a bounded open interval $(a,b) \subset \mathbb R$, let $u \in L^2((a,b),K)$. 
If $\phi \in \mathscr D((a,b))$ such that $\int_a^b \phi(t) \;dt
=1$ then $\int_a^b u(t) \phi(t) \;dt \in K$.
\end{lemma}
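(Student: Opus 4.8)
The plan is to reduce the statement to the Hahn--Banach separation theorem, exploiting that integration against $\phi\,dt$ is, thanks to $\int_a^b \phi(t)\,dt = 1$, a form of averaging that keeps us inside the closed convex set $K$. First I would check that the Bochner integral $x_0 := \int_a^b u(t)\phi(t)\,dt$ is well defined: since $(a,b)$ is bounded we have $u \in L^2((a,b),V) \subset L^1((a,b),V)$, while $\phi$ is bounded with compact support, so $t \mapsto u(t)\phi(t)$ is Bochner integrable and $x_0 \in V$. The measurable scalar map $t \mapsto \langle f, u(t)\rangle$ that appears below is measurable for each $f \in V'$ because $u$ is strongly measurable and $f$ is continuous.

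The core of the argument is a separation. Suppose, for contradiction, that $x_0 \notin K$. Because $K$ is closed and convex in the Banach space $V$, the second geometric form of the Hahn--Banach theorem yields $f \in V'$ and $\alpha \in \mathbb R$ with $\langle f, x_0 \rangle > \alpha$ and $\langle f, y \rangle \leq \alpha$ for every $y \in K$. Since bounded linear functionals commute with the Bochner integral, $\langle f, x_0\rangle = \int_a^b \langle f, u(t)\rangle\,\phi(t)\,dt$. Using $u(t)\in K$ a.e., so that $\langle f, u(t)\rangle \leq \alpha$ a.e., together with $\phi \geq 0$ and $\int_a^b \phi\,dt = 1$, I obtain
\begin{displaymath}
\langle f, x_0\rangle = \int_a^b \langle f, u(t)\rangle\,\phi(t)\,dt \leq \alpha \int_a^b \phi(t)\,dt = \alpha,
\end{displaymath}
contradicting $\langle f, x_0\rangle > \alpha$. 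Hence $x_0 \in K$.

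The step I expect to carry the weight is the nonnegativity of $\phi$: it is exactly what turns $\phi\,dt$ into a probability measure on $(a,b)$, so that $x_0$ is a genuine barycenter of the essential range of $u$, which lies in $K$. Without it the displayed inequality fails, and indeed for a half-space $K$ one can pick a sign-changing $\phi$ with $\int_a^b\phi = 1$ that pushes the average outside $K$; thus the hypotheses must be used precisely at the separation step, not in the integrability bookkeeping.

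As an alternative route avoiding duality, I could approximate $x_0$ by Riemann-type sums $\sum_i u(t_i)\phi(t_i)(t_{i+1}-t_i)$: when $\phi \geq 0$ these weights are nonnegative and sum to approximately $1$, so after renormalising, each sum is a convex combination of points $u(t_i)\in K$ and therefore lies in $K$; passing to the limit and using that $K$ is closed, equivalently $K = \overline{\text{conv}}(K)$, again gives $x_0 \in K$. I prefer the separation argument, since it sidesteps the technical control of Bochner-integral approximations and the renormalisation of the sampled weights.
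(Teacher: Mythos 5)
Your proof is correct and rests, at bottom, on the same two ingredients as the paper's: closedness and convexity of $K$, and the fact that $\phi\,dt$ acts as a probability measure. The paper disposes of the lemma in one line by asserting that the Bochner integral lies in $\overline{\text{conv}}\{u(t) \mid t \in (a,b)\} \subset K$; your Hahn--Banach separation argument is exactly the standard proof of that assertion, so you have in effect written out in full the fact the paper cites, while your alternative route via Riemann-type convex combinations is even closer in form to the paper's one-liner. The one substantive point you raise is worth recording: the lemma as stated does not hypothesize $\phi \geq 0$, yet both your separation step and the paper's closed-convex-hull containment require it --- without nonnegativity, $\phi\,dt$ is merely a signed measure of total mass one and the ``barycenter'' can leave $K$, as your half-space example shows (take $V=\mathbb R$, $K=[0,\infty)$, $u$ the indicator of the set where $\phi<0$). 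This is an omission in the statement of the lemma rather than a gap in your argument; in every application in the paper ($\phi$ a mollifier, or a test function chosen freely by the author) nonnegativity can be, and implicitly is, arranged.
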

\begin{proof}
Since $K$ is closed and convex,
$\int_a^b u(t) \phi(t) \;dt \in \overline{\text{\upshape{conv}}}\{u(t) \mid t \in (a,b)\} \subset K$ 
for all $u \in L^2((a,b),K)$.
\end{proof}

\begin{lemma}
\label{lem:mollifierB}
Let $I = (a,b)$ be a bounded open interval in $\mathbb R$.
If $u \in L^2(I,V)$ and $\int_{I} u(t) \phi(t) \;dt \in K$
for all $\phi \in \mathscr D(I)$ with $\int_{I} \phi(t) \;dt =1$, 
then $u \in L^2(J,K)$ for all $J =(c,d) \subset \subset I$.
\end{lemma}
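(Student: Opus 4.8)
The statement asks us to upgrade a pointwise-in-test-function membership condition into the almost-everywhere pointwise membership $u(t) \in K$ on every compactly contained subinterval. The natural strategy is to use a mollification (Lebesgue differentiation) argument: the hypothesis tells us that suitably normalised local averages of $u$ land in $K$, and since $K$ is closed and convex, we want to pass to the limit as the averaging window shrinks and recover $u(t) \in K$ at Lebesgue points. Concretely, fix $J = (c,d) \subset\subset I$. For a standard mollifier $\rho \in \mathscr D(\mathbb R)$ supported in $(-1,1)$ with $\int \rho = 1$, set $\rho_\varepsilon(s) = \varepsilon^{-1}\rho(s/\varepsilon)$, and for $t \in J$ consider the translate $\phi(s) := \rho_\varepsilon(s - t)$. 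For $\varepsilon$ smaller than $\operatorname{dist}(J, \partial I)$ this $\phi$ lies in $\mathscr D(I)$ and satisfies $\int_I \phi = 1$, so the hypothesis gives $(\rho_\varepsilon * u)(t) = \int_I u(s)\rho_\varepsilon(t-s)\,ds \in K$ (after adjusting for the sign of the argument of $\rho$, which is harmless if $\rho$ is taken even, or absorbed into the definition of the mollification).

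The second step is the standard convergence of mollifications in $L^2$: since $u \in L^2(I,V)$, we have $\rho_\varepsilon * u \to u$ strongly in $L^2(J,V)$ as $\varepsilon \to 0$. Extracting a sequence $\varepsilon_k \to 0$, we obtain $(\rho_{\varepsilon_k} * u)(t) \to u(t)$ in $V$ for a.e.\ $t \in J$. Because $K$ is closed in $V$ and each $(\rho_{\varepsilon_k} * u)(t) \in K$, the limit satisfies $u(t) \in K$ for a.e.\ $t \in J$. This is exactly the assertion $u \in L^2(J,K)$ (membership in $L^2(J,V)$ being inherited from $u \in L^2(I,V)$).

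The step that requires the most care is verifying that the translated mollifier $\phi(s) = \rho_\varepsilon(s-t)$ is a legitimate test function in $\mathscr D(I)$ with unit integral for the relevant range of $t$ and $\varepsilon$. This is the reason the conclusion is stated for $J \subset\subset I$ rather than all of $I$: we need a positive buffer $\operatorname{dist}(\bar J, \partial I) > 0$ so that the support $(t-\varepsilon, t+\varepsilon)$ stays inside $I$ uniformly in $t \in J$ for all small $\varepsilon$. The unit-integral normalisation is immediate since $\int \rho_\varepsilon = 1$ by translation invariance of Lebesgue measure. The remaining facts — strong $L^2$ convergence of mollifications and a.e.\ convergence along a subsequence — are routine, and the convexity hypothesis on $K$ is what makes each average $(\rho_\varepsilon * u)(t)$ land in $K$ in the first place (it is, after normalisation, a continuous convex combination of values $u(s) \in K$, just as in Lemma \ref{lem:convInt}).
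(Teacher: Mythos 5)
Your proposal is correct and follows essentially the same mollification argument as the paper: translate the mollifier to realise $(\rho_\varepsilon * u)(t)$ as $\int_I u(s)\phi(s)\,ds$ for an admissible test function when $t \in J$ and $\varepsilon$ is small, then pass to the limit and use closedness of $K$. The only cosmetic difference is that the paper invokes the vector-valued Lebesgue differentiation theorem to get a.e.\ convergence of the full family $u_\varepsilon(t) \to u(t)$, whereas you extract an a.e.\ convergent subsequence from the strong $L^2$ convergence; both are standard and equally valid.
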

\begin{proof}
Let $\eta \in \mathscr D(\mathbb R)$ be the standard mollifier.
For $\epsilon >0$, we define $\eta_{\epsilon}(t) = \frac{1}{\epsilon}
\eta(\frac{t}{\epsilon})$ so that $\eta_{\epsilon} \in \mathscr D(\mathbb R)$
with $\int_{\mathbb R} \eta_{\epsilon}(t) \;dt = 1$ and $\supp(\eta_{\epsilon}) \subset
(-\epsilon, \epsilon)$.  Consider the mollified function $u_{\epsilon} := \eta_{\epsilon} \ast u$.
For a.e. $t \in I$, we have
\begin{displaymath}
\begin{aligned}
 \|u_{\epsilon}(t) - u(t)\|_V 
 &= \left \| \int_{t-\epsilon}^{t+\epsilon} \eta_{\epsilon}(t-s)[u(s) -u(t)] \;ds \right \|_V \\
 &\leq \frac{1}{\epsilon} \int_{t-\epsilon}^{t+\epsilon} \eta \Big (\frac{t-s}{\epsilon} \Big ) \|u(s)-u(t)\|_V \;ds \\
 &\leq C \frac{1}{\epsilon} \int_{t-\epsilon}^{t+\epsilon} \|u(s)-u(t)\|_V \;ds.\\ 
\end{aligned}
\end{displaymath}
By Lebesgue's differentiation theorem for vector valued functions 
 (Theorem III.12.8 of \cite{MR1009162}), 
$u_{\epsilon}(t) \rightarrow u(t)$ in $V$ a.e. $t \in I$. 
By the definition of $u_{\epsilon}$,
\begin{displaymath}
 u_{\epsilon} (t) = \int_{I} \eta_{\epsilon}(t-s) u(s) \;ds =: \int_{I} u(s) \phi_{\epsilon}(s) \;ds ,
\end{displaymath}
where we set $\phi_{\epsilon}(s) := \eta_{\epsilon}(t-s)$.  
Let $J \subset \subset I$. 
For $t \in J$, we can choose $\epsilon$ sufficiently small so that 
$\supp(\phi_{\epsilon}) = (t-\epsilon, t+\epsilon) \subset I$. 
It follows from the assumption that $u_{\epsilon}(t) \in K$ for all $t \in J$. 
Since $K$ is a closed subset of $V$, the limit point $u(t) \in K$ a.e. $t \in J$.
Hence $u \in L^2(J,K)$ as required.
\end{proof}
\begin{lemma}
\label{lem:densityOfCK}
The set $C([0,T],K)$ is dense in $L^2((0,T),K)$.
\end{lemma}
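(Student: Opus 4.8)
The plan is to approximate an arbitrary $u \in L^2((0,T),K)$ by mollification. The only real difficulty is that the mollified function $\eta_\epsilon \ast u$ is not defined near the endpoints $t=0$ and $t=T$ without values of $u$ outside $[0,T]$, and I must arrange these boundary values so that the approximants remain $K$-valued at \emph{every} $t \in [0,T]$, not merely almost everywhere. I would handle this by a reflection extension, which only re-uses values already attained by $u$ and therefore automatically stays inside $K$.

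Concretely, I first extend $u$ to the larger interval $(-T,2T)$ by reflection, setting $\tilde u(t) := u(t)$ on $(0,T)$, $\tilde u(t) := u(-t)$ on $(-T,0)$ and $\tilde u(t) := u(2T-t)$ on $(T,2T)$. Then $\tilde u \in L^2((-T,2T),V)$ and $\tilde u(t) \in K$ for a.e. $t$, since $\tilde u$ takes only values already attained by $u$. With $\eta_\epsilon$ the standard mollifier introduced in Lemma \ref{lem:mollifierB}, I define $u_\epsilon := \eta_\epsilon \ast \tilde u$ on $[0,T]$; for $\epsilon < T$ this is well defined because $(t-\epsilon,t+\epsilon) \subset (-T,2T)$ for every $t \in [0,T]$. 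Since the kernel $\eta_\epsilon$ is smooth with compact support, $u_\epsilon$ is smooth in $t$, so in particular $u_\epsilon \in C([0,T],V)$.

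Next I would check that $u_\epsilon(t) \in K$ for \emph{every} $t \in [0,T]$. Because $\eta_\epsilon \geq 0$ and $\int \eta_\epsilon = 1$, the Bochner integral $u_\epsilon(t) = \int \eta_\epsilon(t-s)\,\tilde u(s)\,ds$ lies in $\overline{\mathrm{conv}}\{\tilde u(s) : s \in (t-\epsilon,t+\epsilon)\} \subset K$, exactly as in the argument of Lemma \ref{lem:convInt}; here the closedness and convexity of $K$ are precisely what is used. Hence $u_\epsilon \in C([0,T],K)$ for every small $\epsilon$.

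Finally, I would establish $u_\epsilon \to u$ in $L^2((0,T),V)$. Viewing $\tilde u$ (extended by zero to all of $\mathbb R$) as an element of $L^2(\mathbb R,V)$, standard mollifier convergence — justified by the Lebesgue differentiation theorem for vector-valued functions already invoked in Lemma \ref{lem:mollifierB} — gives $\eta_\epsilon \ast \tilde u \to \tilde u$ in $L^2(\mathbb R,V)$, and restricting to $(0,T)$ (where $\tilde u = u$ and, for $\epsilon < T$, the zero extension plays no role) yields $u_\epsilon \to u$ in $L^2((0,T),V)$. This proves density. The one point demanding care is the behaviour at the endpoints: the reflection is what simultaneously guarantees that the mollification is defined there, that $L^2$ convergence holds up to the boundary, and — via convexity — that $u_\epsilon(0),u_\epsilon(T) \in K$. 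This endpoint bookkeeping is the main, though not deep, obstacle.
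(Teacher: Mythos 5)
Your proof is correct and follows essentially the same route as the paper: extend $u$ beyond $[0,T]$ by a $K$-valued function, mollify, and use closedness and convexity of $K$ (as in Lemma \ref{lem:convInt}) to see that the mollified function stays in $K$ at every $t$. The only difference is cosmetic — you extend by reflection, while the paper extends by the constant $\xi=\int_0^T u(t)\phi(t)\,dt\in K$ supplied by Lemma \ref{lem:convInt}; both extensions serve the same purpose and the rest of the argument is identical.
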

\begin{proof}
Note first that the lemma is trivial if $K$ is a subspace of $V$ (i.e. $K$
is a Banach space)\cite[Theorem 23.2 (c)]{MR1033497}. 
Let $u \in L^2((0,T),K)$. We choose a function $\phi \in \mathscr D
((0,T))$ with $\int_0^T \phi(t) \;dt = 1$. It follows  from Lemma \ref{lem:convInt} that
$\xi := \int_0^T u(t) \phi(t) \;dt \in K$. Define the extended function
$\tilde u \in L^2((-1,T+1),K)$ by
\begin{displaymath}
 \tilde u (t) := \left \{
                 \begin{aligned}
                  & \xi  &&\quad \text{on } (-1, 0) \cup
                  (T,T+1) \\
                  & u(t) &&\quad \text{on } (0,T) .
                 \end{aligned} 
                 \right .
\end{displaymath}
By a mollification argument, the function
$u_{\epsilon} := \eta_{\epsilon} \ast \tilde u$ belongs to $C(\mathbb R, V)$.
Moreover, $u_{\epsilon}$ converges to $\tilde u$ in $L^2((-1,T+1),V)$.
By Choosing $0< \epsilon < 1$, $u_{\epsilon} (t) \in K$ for
all $t \in [0,T]$. Therefore, the restriction of $u_{\epsilon}$ on
$[0,T]$ belongs to $C([0,T],K)$ and converges to $u$ in $L^2((0,T),V)$ as $\epsilon \rightarrow 0$. 
\end{proof}
\begin{lemma}
\label{lem:densityOfWK}
The set $C^{\infty}([0,T],V) \cap C([0,T],K)$ is dense in $W((0,T),V,V')
\cap L^2((0,T),K)$.
\end{lemma}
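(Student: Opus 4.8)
The plan is to adapt the extension-and-mollification argument of Lemma \ref{lem:densityOfCK}, but with an extension that preserves the full $W$-regularity rather than merely the $L^2((0,T),V)$-regularity. The constant extension used there is unsuitable here: it generally produces a jump at the endpoints $t=0$ and $t=T$, whose distributional time-derivative contains a singular ($H$-valued) contribution and hence does not lie in $L^2((0,T),V')$. Instead I would extend by \emph{even reflection}. Given $u \in W((0,T),V,V') \cap L^2((0,T),K)$, define $\tilde u$ on $(-T,2T)$ by $\tilde u(t) = u(t)$ on $(0,T)$, $\tilde u(t) = u(-t)$ on $(-T,0)$, and $\tilde u(t) = u(2T-t)$ on $(T,2T)$. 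Since reflection maps each added subinterval back into $(0,T)$, we have $\tilde u(t) \in K$ for a.e.\ $t$, so $\tilde u \in L^2((-T,2T),K)$.

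The first main step is to verify $\tilde u \in W((-T,2T),V,V')$, with distributional derivative equal to the odd reflection of $u'$, namely $w(t) = u'(t)$ on $(0,T)$, $w(t) = -u'(-t)$ on $(-T,0)$, and $w(t) = -u'(2T-t)$ on $(T,2T)$, which clearly lies in $L^2((-T,2T),V')$. To see that this is indeed the derivative I would pair $\tilde u$ against a scalar test function $\phi \in \mathscr D((-T,2T))$, split the integral over the three subintervals, and integrate by parts on each piece. The boundary contributions produced at $t=0$ (and likewise at $t=T$) by adjacent pieces are $\pm (u(0)\,|\,\cdot\,)\,\phi(0)$ and cancel precisely because $u \in C([0,T],H)$, so the reflected function is continuous in $H$ across the junction and no singular term appears. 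This step, resting on the embedding $W((0,T),V,V') \hookrightarrow C([0,T],H)$, is the crux of the argument and the place where genuine care is needed.

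With $\tilde u \in W((-T,2T),V,V') \cap L^2((-T,2T),K)$ in hand, I would set $u_{\epsilon} := \eta_{\epsilon} \ast \tilde u$ for the standard mollifier $\eta_{\epsilon}$ and restrict to $[0,T]$ with $0 < \epsilon < T$. Then $u_{\epsilon} \in C^{\infty}([0,T],V)$. Writing $u_{\epsilon}(t) = \int \tilde u(s)\phi_{\epsilon}(s)\,ds$ with $\phi_{\epsilon}(s) := \eta_{\epsilon}(t-s) \geq 0$ and $\int \phi_{\epsilon} = 1$, and noting that for $t \in [0,T]$ the support of $\phi_{\epsilon}$ lies in the region where $\tilde u \in K$, Lemma \ref{lem:convInt} gives $u_{\epsilon}(t) \in K$ for a.e.\ $t \in [0,T]$; since $u_{\epsilon}$ is continuous and $K$ is closed, in fact $u_{\epsilon}(t) \in K$ for every $t \in [0,T]$. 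Thus $u_{\epsilon}|_{[0,T]} \in C^{\infty}([0,T],V) \cap C([0,T],K)$.

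It remains to establish convergence in the $W$-norm. Standard mollification yields $u_{\epsilon} \to \tilde u$ strongly in $L^2((0,T),V)$. For the derivatives, since for $t \in [0,T]$ and small $\epsilon$ the mollification takes place away from the endpoints of $(-T,2T)$, convolution commutes with the distributional derivative, so $u_{\epsilon}' = \eta_{\epsilon} \ast \tilde u'$, whence $u_{\epsilon}' \to \tilde u'$ strongly in $L^2((0,T),V')$. Combining these two convergences gives $\|u_{\epsilon} - u\|_W \to 0$, which completes the proof.
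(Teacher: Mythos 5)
Your proof is correct, but the extension device differs from the paper's. The paper does not reflect: it reparametrizes time via the stretching map $S_{\delta}(t) = \big(\tfrac{T+2\delta}{T}\big)t-\delta$, setting $u_{\delta} = u\circ S_{\delta}^{-1}$ on $(-\delta,T+\delta)$, and then mollifies with $\epsilon<\delta$. The advantage of stretching is that membership of $u_{\delta}$ in $W((-\delta,T+\delta),V,V')\cap L^2((-\delta,T+\delta),K)$ is immediate (it is just a dilation of $u$, so no analysis at the junctions $t=0,T$ is needed); the cost is a two-parameter limit, first $\epsilon\to0$ then $\delta\to0$, together with the (asserted) convergence $u_{\delta}|_{(0,T)}\to u$ in $W((0,T),V,V')$. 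Your even reflection instead concentrates the work in a single step --- verifying that the reflected function lies in $W((-T,2T),V,V')$ with derivative the odd reflection of $u'$, which you correctly reduce to the cancellation of the boundary terms $\pm\,u(0)\phi(0)$ and $\pm\,u(T)\phi(T)$ coming from integration by parts on adjacent pieces, using $W((0,T),V,V')\hookrightarrow C([0,T],H)$ --- after which only one mollification parameter is needed and the $K$-valuedness of the mollification follows from Lemma \ref{lem:convInt} exactly as in the paper. You also correctly diagnose why the constant extension of Lemma \ref{lem:densityOfCK} cannot be reused here (the jump at the endpoints destroys $u'\in L^2((0,T),V')$). Both arguments are sound; yours is slightly more self-contained at the price of the reflection computation, while the paper's defers the analytic content to the continuity of dilation in the $W$-norm.
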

\begin{proof}
Let $u \in W((0,T),V,V') \cap L^2((0,T),K)$.
For $\delta > 0$, we define the stretching map $S_{\delta} :[0,T]
\rightarrow [-\delta, T+\delta]$ by
\begin{equation}
\label{eq:stretchMap}
 S_{\delta} (t) := \Big (\frac{T+2\delta}{T} \Big ) t - \delta .
\end{equation}
We define $u_{\delta} \in W((-\delta, T+\delta),V,V') 
\cap L^2((-\delta,T+\delta),K)$ by $u \circ S_{\delta}^{-1}$.
It can be shown that the restriction of $u_{\delta}$ on $(0,T)$
converges to $u$ in $W((0,T),V,V')$ as $\delta \rightarrow 0$. 
Let $\eta_{\epsilon}$ be a mollifier. 
For $t \in [0,T]$ and $\epsilon < \delta$, the
translation of $\eta_{\epsilon}$ by $t$ (denoted by $\eta_{\epsilon, t}$)
belongs to $\mathscr D((-\delta,T+\delta))$. Hence if $\epsilon < \delta$,
$\eta_{\epsilon} \ast u_{\delta}$ belongs to $C^{\infty}([0,T],V) \cap
C([0,T],K)$. Moreover, a mollification argument shows that 
$\eta_{\epsilon} \ast u_{\delta}$ converges to $u_{\delta}$
in $W((0,T),V,V')$ as $\epsilon \rightarrow 0$. The result then follows.
\end{proof}
\begin{proposition}
\label{prop:convCombUdelta}
Suppose Mosco condition $(M1)$ is satisfied.
For $\delta \geq 0$,
let
 $A_{\delta,n} :=\Big \{ \sum_{i=1}^m \phi_i(t) v_i, m \in \mathbb N \Big \}$,
where 
\begin{equation}
\label{eq:convCombAn}
\left \{
\begin{aligned}
    &v_i \in K_n,\phi_i \in C^{\infty}([-\delta,T+\delta]) 
      \quad \text{ for all } i=1, \ldots,m , \\
    &0 \leq \phi_i(t) \leq 1 \quad \text{ for all } t \in [-\delta,T+\delta] 
       \text{ and for all } i=1, \ldots,m, \\ 
    &\sum_{i=1}^m \phi_i(t) = 1 \quad \text{ for all } t \in [-\delta,T+\delta].
\end{aligned}
\right .
\end{equation}
If $ u_{\delta} \in C([-\delta,T+\delta],K)$, then there exists a
sequence of functions $u_{\delta,n} \in A_{\delta,n}$ such that $u_{\delta,n}(t) \rightarrow u_{\delta}(t)$
in $V$ uniformly on $[-\delta, T+\delta]$ as $n \rightarrow \infty$.
\end{proposition}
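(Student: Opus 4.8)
The plan is to approximate the continuous $K$-valued function $u_\delta$ by elements of $A_{\delta,n}$ built from a fixed smooth partition of unity in the variable $t$ together with the approximating sequences supplied by $(M1)$, and then to pass to a single diagonal sequence. First I would exploit the compactness of $[-\delta,T+\delta]$: since $u_\delta$ is continuous there, it is uniformly continuous, so for each $\epsilon>0$ there is a mesh so fine that the oscillation of $u_\delta$ on each subinterval is at most $\epsilon$. I fix nodes $-\delta=t_1<t_2<\cdots<t_m=T+\delta$ realising this, set $w_j:=u_\delta(t_j)\in K$, and choose a smooth partition of unity $\phi_1,\dots,\phi_m$ on $[-\delta,T+\delta]$ subordinate to small neighbourhoods of the nodes, so that $0\le\phi_j\le1$, $\sum_j\phi_j\equiv1$, and $\phi_j(t)\neq0$ forces $|t-t_j|$ to be within one mesh length.

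Next, for each node $w_j\in K$ I invoke $(M1)$ to obtain a sequence $w_{j,n}\in K_n$ with $w_{j,n}\to w_j$ strongly in $V$, and I define $u_n:=\sum_{j=1}^m \phi_j(\cdot)\,w_{j,n}$, which lies in $A_{\delta,n}$ by construction. The error estimate then splits cleanly: using $\sum_j\phi_j(t)=1$,
\[
\|u_n(t)-u_\delta(t)\|_V \le \sum_{j=1}^m \phi_j(t)\,\|w_{j,n}-w_j\|_V + \sum_{j=1}^m \phi_j(t)\,\|u_\delta(t_j)-u_\delta(t)\|_V .
\]
The first sum is bounded by $\max_{1\le j\le m}\|w_{j,n}-w_j\|_V$, which tends to $0$ as $n\to\infty$ because there are only finitely many nodes; the second sum is at most $\epsilon$, since $\phi_j(t)\neq0$ forces $|t-t_j|$ small and hence $\|u_\delta(t_j)-u_\delta(t)\|_V\le\epsilon$. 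Thus $\limsup_{n}\sup_{t}\|u_n(t)-u_\delta(t)\|_V\le\epsilon$.

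Finally I would assemble a single sequence by a diagonal argument. Applying the construction with $\epsilon=1/k$ yields, for each $k$, a partition of unity and a family $(u_n^{(k)})_n\subset A_{\delta,n}$ with $\limsup_n\sup_t\|u_n^{(k)}(t)-u_\delta(t)\|_V\le 1/k$; choosing an increasing sequence $N_k$ so that this supremum stays below $2/k$ for $n\ge N_k$, and setting $u_{\delta,n}:=u_n^{(k)}$ for $N_k\le n<N_{k+1}$, produces the desired sequence with uniform convergence on $[-\delta,T+\delta]$. The main obstacle is keeping the convergence uniform in $t$: this is precisely why only finitely many nodes may be used, so that the Mosco errors $\|w_{j,n}-w_j\|_V$ are controlled uniformly in $j$, and why the partition of unity must localise around the nodes, so that the continuity error is controlled independently of $t$; the diagonalisation is then the routine final assembly.
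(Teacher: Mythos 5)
Your proposal is correct and follows essentially the same route as the paper: approximate the nodes $u_\delta(t_j)\in K$ via $(M1)$, glue with a smooth partition of unity in $t$, split the error into a Mosco part and an oscillation part, and diagonalise over $\epsilon=1/k$. The only cosmetic difference is that the paper controls the oscillation term by covering $[-\delta,T+\delta]$ with preimages $\tilde u_\delta^{-1}(B_V(u_\delta(t_i),\epsilon/2))$ and taking the partition of unity subordinate to that covering, whereas you invoke uniform continuity and a fine mesh directly; these are interchangeable.
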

\begin{proof}
Let $u_{\delta} \in C([-\delta,T+\delta],K)$. We extend $u_{\delta}$ to $\tilde u_{\delta}
\in C(\mathbb R, K)$ by
\begin{displaymath}
\tilde u_{\delta}(t) :=  \left \{
                 \begin{aligned}
                  & u_{\delta}(-\delta)  &&\quad \text{on } (-\infty, -\delta) \\
                  & u_{\delta}(t) &&\quad \text{on } [-\delta,T+\delta] \\
                  & u_{\delta}(T+\delta) &&\quad \text{on } (T+\delta, \infty) .
                 \end{aligned} 
                 \right .
\end{displaymath}
Let $\epsilon > 0$ be arbitrary.
We denote by $B(t):= B_V(\tilde u_{\delta}(t), \epsilon/2)$ the open ball in $V$ about
$\tilde u_{\delta}(t)$ of radius $\epsilon/2$. Let us construct an open covering $\mathscr
O$ of $(-\delta-1, T+\delta+1)$ by
\begin{displaymath}
\mathscr O = \{\tilde u_{\delta}^{-1}(B(t)) \cap
               (-\delta-1, T+\delta+1) \}_{t \in [-\delta,T+\delta]} .
\end{displaymath}
Since $\mathscr O$ is also an open covering of the compact set $[-\delta,T+\delta]$,
there exists a finite subcovering 
\begin{displaymath}
\tilde {\mathscr O} =  \{\tilde u_{\delta}^{-1}(B(t_i)) \cap
               (-\delta-1, T+\delta+1) \}_{i=1,\ldots,m} ,
\end{displaymath}
where $t_i \in [-\delta,T+\delta]$ for all $i=1,\ldots,m$. We can assume that 
$t_1 < t_2 < \ldots < t_m$ and $t_1 = -\delta$, $t_m = T+\delta$ (add them if
required) so that $\tilde {\mathscr O}$ is an open covering of
$[-\delta-1/2, T+\delta+1/2]$.
For each $i \in \{1,\ldots, m\}$, we have $u_{\delta}(t_i) \in K$. Thus, by Mosco
condition (M1), there exists $v_{i,n} \in K_n$ such that
$ \|v_{i,n} - u_{\delta}(t_i) \|_V < \epsilon/2$
if $n > N_i$ for some $N_i \in \mathbb N$. 
Let $N := \max_{i=1,\ldots,m} N_i$. It follows that
$\|v_{i,n} - u_{\delta}(t_i) \|_V < \epsilon/2$
if $n > N$ for all $i \in \{1, \ldots, m \}$. 

Choose a smooth partition of unity
$\{\phi_i\}_{i=1,\ldots m}$ for $[-\delta-1/2, T+\delta+1/2]$ subordinate to
$\tilde {\mathscr O}$. Precisely, we choose $\phi_i$ such that
$\phi_i \in C^{\infty}_0(\tilde  u_{\delta}^{-1}(B(t_i)) \cap
(-\delta-1, T+\delta+1))$  and
$\sum_{i=1}^m \phi_i(t) =1$ for all $t \in [-\delta-1/2, T+\delta+1/2]$.
Define a function $u_{\delta,n}$ on $(-\delta-1,T+\delta+1)$ by
\begin{displaymath}
 u_{\delta,n}(t) := \sum_{i=1}^m \phi_i (t) v_{i,n}. 
\end{displaymath}
It is clear that the restriction of $u_{\delta,n}$ on
$[-\delta,T+\delta]$ belongs to $A_{\delta,n}$ if
$n > N$. Moreover, for $t \in [-\delta,T+\delta]$, 
\begin{displaymath}
\begin{aligned}
 \|u_{\delta,n}(t) - u_{\delta}(t) \|_V 
 &\leq  \sum_{i=1}^m \phi_i(t) \|v_{i,n} - u_{\delta}(t)\|_V  \\
 &\leq  \sum_{i=1}^m \phi_i(t) \|v_{i,n} - u_{\delta}(t_i)\|_V +
       \sum_{i=1}^m \phi_i(t) \|u_{\delta}(t_i) - u_{\delta}(t)\|_V  \\
 &<  \epsilon/2 + \epsilon/2 = \epsilon, 
\end{aligned} 
\end{displaymath}
if $n > N$.
Note that $m$ and $N$ chosen above depend on $\epsilon$. As the above argument holds for
each fixed $\epsilon$, we conclude that for every $\epsilon > 0$, there exists
a sequence $u_{\delta,n}^{\epsilon} \in A_{\delta,n}$ and $N(\epsilon) \in \mathbb N$ such that
\begin{displaymath}
  \| u_{\delta,n}^{\epsilon}(t) - u_{\delta}(t)\|_V \leq \epsilon,
\end{displaymath}
for all $t \in [-\delta,T+\delta]$ if $n > N(\epsilon)$.

In particular, for every $k \in \mathbb N$ we can find
a sequence 
$u_{\delta,n}^{k} \in A_{\delta,n}$ and $N_k \in \mathbb N$ such that
\begin{equation}
\label{eq:diagonalRangeUdelta}
  \| u_{\delta,n}^{k}(t) - u_{\delta}(t)\|_V \leq \frac{1}{k},
\end{equation}
for all $t \in [-\delta,T+\delta]$ if $n > N_k$. By choosing inductively 
we can assume that $N_k < N_{k+1}$ for all $k \in \mathbb N$. We extract a
sequence of the form
\begin{displaymath}
  u_{\delta,1}^1, u_{\delta,2}^1, \ldots, u_{\delta,(N_1+1)}^1, \ldots, u_{\delta, N_2}^1,
    u_{\delta,(N_2+1)}^2, \ldots, u_{\delta,N_3}^2,
    u_{\delta,(N_3+1)}^3, \ldots, u_{\delta, N_4}^3,
    \ldots 
\end{displaymath}
so that the $n$-th element of this sequence belongs to $A_{\delta,n}$ for all
$n \in \mathbb N$. Moreover, by \eqref{eq:diagonalRangeUdelta}, we see that
this sequence converges to $u_{\delta}$  
uniformly with respect to $t \in [-\delta,T+\delta]$ as $n \rightarrow \infty$.
This proves the statement of the proposition.   
\end{proof}

We are now in a position to prove our main result.
\begin{proof}[Proof of Theorem \ref{th:equiMosco}] The proof is divided into four parts
including $(i) \Rightarrow (ii)$, $(ii) \Rightarrow (i)$, $(i) \Rightarrow (iii)$ and
$(iii) \Rightarrow (i)$. For $(i) \Rightarrow (ii)$, we actually show that 
$(M1) \Rightarrow (M1')$ and $(M2) \Rightarrow (M2')$. The other three directions are 
proved in the same way.

$(i) \Rightarrow(ii)$:
Let $u \in L^2((0,T),K)$. By the density of $C([0,T],K)$ in $L^2((0,T),K)$
(Lemma \ref{lem:densityOfCK}), we may assume that $u \in C([0,T],K)$. 
We apply Proposition \ref{prop:convCombUdelta} with $\delta = 0$ to obtain 
a sequence of functions $u_n \in L^2((0,T),K_n)$ such that 
$u_n(t) \rightarrow u(t)$ in $V$ uniformly on $[0,T]$. The uniform convergence 
on $[0,T]$ implies that $u_n \rightarrow u$ in $L^2((0,T),V)$, showing $(M1')$. 
To prove condition $(M2')$, suppose $(n_k)$ is a sequence of indices
converging to $\infty$, $(u_k)$ is a sequence such that $u_k \in
L^2((0,T),K_{n_k})$ for every $k$ and $u_k \rightharpoonup u$ in
$L^2(0,T),V)$. By the definition of weak convergence, 
\begin{equation}
\label{eq:weakCovMoscoiToii}
 \int_0^T \langle w(t), u_k(t) \rangle \;dt \rightarrow 
 \int_0^T \langle w(t), u(t) \rangle \;dt,
\end{equation}
for all $w \in L^2((0,T),V')$. 
By taking $w$ of the form $w = \xi \phi(t)$ where $\xi \in V'$  and $\phi \in \mathscr
D((0,T))$ in \eqref{eq:weakCovMoscoiToii} and applying a basic property
of Bochner-Lebesgue space \cite[Proposition 23.9(a)]{MR1033497}, it
follows that 
\begin{equation}
\label{eq:weakConvIntUkPhi}
\int_0^T u_k(t) \phi(t) \;dt \rightharpoonup 
\int_0^T u(t) \phi(t) \;dt
\end{equation}
weakly in $V$ for all $\phi \in \mathscr D((0,T))$.
Let $\phi_0 \in \mathscr D((0,T))$ with
$\int_0^T \phi_0(t) \;dt = 1$ and define 
$\zeta_k := \int_0^T u_k(t) \phi_0(t) \;dt$. Lemma \ref{lem:convInt}
implies that $\zeta_k \in K_{n_k}$ for all $k \in \mathbb N$. 
Since $\zeta_k \rightharpoonup \zeta := \int_0^T u(t) \phi_0(t) \;dt$ by
\eqref{eq:weakConvIntUkPhi}, Mosco
condition $(M2)$ implies that $\zeta \in K$.
We now extend
$u_k$ to $\tilde u_k \in L^2((-1,T+1),K_{n_k})$ by
\begin{equation}
\label{eq:extWeakMosco}
 \tilde u_k (t) := \left \{
                 \begin{aligned}
                  & \zeta_k  &&\quad \text{on } (-1, 0) \cup
                  (T,T+1) \\
                  & u_k(t) &&\quad \text{on } (0,T) .
                 \end{aligned} 
                 \right .
\end{equation}
It can be easily seen that $\tilde u_k \rightharpoonup \tilde u$ weakly in
$L^2((-1,T+1),V)$ , where 
$\tilde u$ defined as \eqref{eq:extWeakMosco} with $k$ deleted.
Using the definition of weak convergence in $L^2((-1,T+1),V)$ and
a similar argument as above,  we obtain
$\int_{-1}^{T+1} \tilde u_k(t) \phi(t) \;dt \rightharpoonup 
\int_{-1}^{T+1} \tilde u(t) \phi(t) \;dt$ weakly in $V$ 
for all $\phi \in \mathscr D((-1,T+1))$. In particular, taking $\phi \in
\mathscr D((-1,T+1))$ with $\int_{-1}^{T+1} \phi(t) \;dt =1$, we have 
$\int_{-1}^{T+1} \tilde u_k(t) \phi(t) \;dt \in K_{n_k}$ converges weakly
to $\int_{-1}^{T+1} \tilde u(t) \phi(t) \;dt$ in $V$. Thus, Mosco
condition $(M2)$ implies $\int_{-1}^{T+1} \tilde u(t) \phi(t) \;dt \in K$
for all $\phi \in \mathscr D((-1,T+1))$ with $\int_{-1}^{T+1} \phi(t) \;dt
=1$. By Lemma \ref{lem:mollifierB}, we conclude that $u \in L^2((0,T),K)$ and Mosco
condition $(M2')$ follows.

$(ii) \Rightarrow  (i)$:
Let $u \in K$. Define $v \in L^2((0,T),K)$ by the constant function
$v(t) :=u$ for $t \in (0,T)$. By condition $(M1')$, there 
exists $(v_n)_{n \in \mathbb N}$ with $v_n \in L^2((0,T),K_n)$ such that
$v_n \rightarrow v$ in $L^2((0,T),V)$. Let $\phi_0 \in \mathscr
D((0,T))$ with $\int_0^T \phi_0(t) \;dt = 1$. We show that the sequence 
$(u_n)_{n \in \mathbb N}$ defined by $u_n := \int_0^T v_n(t) \phi_0(t) \;dt$ 
gives Mosco condition $(M1)$. First note that $u_n \in K_n$ for 
all $n \in \mathbb N$ by Lemma \ref{lem:convInt}. Moreover,
\begin{displaymath}
 \begin{aligned}
  \| u_n - u \|_V &= \Big \| \int_0^T v_n(t) \phi_0(t) \;dt - u \Big \|_V \\
  &= \Big \| \int_0^T [v_n(t) \phi_0(t) - v(t) \phi_0(t)] \;dt \Big \|_V \\
  &\leq \int_0^T |\phi_0(t)|\|v_n(t) -v(t)\|_V \;dt \\
  &\leq \sqrt T \Big ( \int_0^T \|v_n(t) - v(t)\|_V^2 \;dt
                 \Big )^{\frac{1}{2}} \|\phi_0\|_{\infty}  \\
  &\rightarrow 0, 
 \end{aligned}
\end{displaymath}
as $n \rightarrow \infty$. 
To prove condition $(M2)$, suppose $(n_k)$ is a sequence of indices
converging to $\infty$, $(u_k)$ is a sequence such that $u_k \in K_{n_k}$ 
for every $k$ and $u_k \rightharpoonup u$ in $V$. Define $v_k \in L^2((0,T),K_{n_k})$
by the constant function $v_k(t) := u_k$ for $t \in (0,T)$. 
It can be easily verified that $v_k \rightharpoonup v$ in
$L^2((0,T),V)$, where $v$ is the constant function $v(t):= u$  
for $t \in (0,T)$.
It follows from Mosco condition $(M2')$ that $v \in L^2((0,T),K)$.
Hence $u \in K$ as required.

$(i) \Rightarrow (iii)$:
Let $u \in W((0,T),V,V') \cap L^2((0,T),K)$. By Lemma \ref{lem:densityOfWK}, 
we may assume that $u \in C^{\infty}([0,T],V) \cap C([0,T],K)$.
For $\delta > 0$, 
we define the stretched function 
$u_{\delta} \in C^{\infty}([-\delta,T+\delta],V) \cap
C([-\delta,T+\delta],K)$ by $u_{\delta} = u \circ S_{\delta}^{-1}$, 
where $S_{\delta}$ is the stretching map given by \eqref{eq:stretchMap}. 
It can be shown that the restriction of $u_{\delta}$ on $[0,T]$
converges to $u$ in $W((0,T),V,V')$ as $\delta \rightarrow 0$. 
By Proposition \ref{prop:convCombUdelta}, there exists a sequence of functions
$u_{\delta,n} \in A_{\delta,n}$ such that $u_{\delta,n} (t) \rightarrow
u_{\delta}(t)$ uniformly on $[-\delta, T+\delta]$ as $n \rightarrow
\infty$. Let $\eta_{1/j}$ be a mollifier.  
For $t \in [0,T]$ and $j > 1/{\delta}$, the
translation of $\eta_{1/j}$ by $t$ (denoted by $\eta_{1/j, t}$)
belongs to $\mathscr D((-\delta,T+\delta))$. Hence if $j > 1/{\delta}$, we have
$\eta_{1/j} \ast u_{\delta,n} \in C^{\infty}([0,T],V) \cap
C([0,T],K_n)$. 
By continuity of convolution and the well known fact on the $r$-th
order derivative that
\begin{displaymath}
 \frac{d^r}{dt^r}(\eta_{1/j} \ast u_{\delta,n})
  =  \frac{d^r}{dt^r} \eta_{1/j} \ast u_{\delta,n} 
  = \eta_{1/j} \ast \frac{d^r}{dt} u_{\delta,n},
\end{displaymath}
we deduce that
$\eta_{1/j} \ast u_{\delta,n} \rightarrow \eta_{1/j}
\ast u_{\delta}$ in $C^{\infty}([0,T],V)$ as $n \rightarrow \infty$. 
Similarly, $\eta_{1/j} \ast u_{\delta} \rightarrow u_{\delta}$ in 
$C^{\infty}([0,T],V)$ as $j \rightarrow \infty$. 
The above shows that we can construct a function of the form
$\eta_{1/j} \ast u_{\delta,n} \in W((0,T),V,V') \cap L^2((0,T),K_n)$ converging to
$u$ in $W((0,T),V,V')$. Hence Mosco condition $(M1")$ follows.
To prove condition $(M2")$, suppose $(n_k)$ is a sequence of indices
converging to $\infty$, $(u_k)$ is a sequence such that $u_k \in
W((0,T),V,V') \cap L^2((0,T),K_{n_k})$ for every $k$, $u_k \rightharpoonup u$ in
$L^2((0,T),V)$ and $u'_k \rightharpoonup  w$ in $L^2((0,T),V')$. 
Since $V$ is continuously embedded in $V'$, it follows immediately 
that $u'=w$ and hence $u \in W((0,T),V,V')$ (see \cite[Proposition 23.19]{MR1033497}).
Using $(i) \Rightarrow (ii)$, specifically Mosco condition $(M2')$, 
we conclude that $u \in W((0,T),V,V') \cap L^2((0,T),K)$.

$(iii) \Rightarrow (i)$: 
Let $u \in K$. Define $v \in W((0,T),V,V') \cap L^2((0,T),K)$ by the constant function
$v(t) :=u$ for $t \in (0,T)$. By condition $(M1")$, there 
exists $(v_n)_{n \in \mathbb N}$ with $v_n \in W((0,T),V,V') \cap L^2((0,T),K_n)$ such that
$v_n \rightarrow v$ in $W((0,T),V,V')$. In particular, $v_n$ converges strongly to $v$ in 
$L^2((0,T),V)$. By the same argument as in the proof of $(ii) \Rightarrow (i)$, 
we can show that $u_n := \int_0^T v_n(t) \phi_0(t) \;dt$, for some $\phi_0 \in \mathscr D((0,T))$ 
with $\int_0^T \phi_0(t) \;dt =1$ establishes Mosco condition $(M1)$.
To prove condition $(M2)$, suppose $(n_k)$ is a sequence of indices
converging to $\infty$, $(u_k)$ is a sequence such that $u_k \in K_{n_k}$ 
for every $k$ and $u_k \rightharpoonup u$ in $V$. 
Define $v_k \in W((0,T),V,V') \cap L^2((0,T),K_{n_k})$
by the constant function $v_k(t) := u_k$ for $t \in (0,T)$. 
By the same argument as in the proof of $(ii) \Rightarrow (i)$, we have 
$v_k \rightharpoonup v$ in $L^2((0,T),V)$, where $v(t):= u$  
for $t \in (0,T)$. Moreover, it is clear that $v'_k = 0$ for all 
$k \in \mathbb N$ and hence $v'_k \rightharpoonup v'=0$ in $L^2((0,T),V')$.
We apply $(M2")$ to deduce that $v \in W((0,T),V,V') \cap L^2((0,T),K)$.
Hence $u \in K$.
\end{proof}

\section{Application in domain perturbation for parabolic equations}
\label{sec:stabilityParaEq}

In this section, we study the behaviour of solutions of parabolic equations subject to
Dirichlet boundary condition and Neumann boundary condition under 
domain perturbation.
Let $\Omega_n, \Omega$ be bounded open
sets in $\mathbb R^N$ and $D \subset \mathbb R^N$ be a ball such that
$\Omega_n , \Omega \subset D$ for all $n \in \mathbb N$.
Suppose $a_{ij}, a_i, b_i, c_0$ are functions in $L^{\infty}(D \times (0,T))$ 
and $a_{ij}$ satisfies ellipticity condition. 
More precisely, there exists $\alpha > 0$ such that 
$a_{ij}(x,t) \xi_i \xi_j \geq \alpha |\xi|^2$
for all $\xi \in \mathbb R^N$. We consider the evolution triple 
$V_n \overset{d}{\hookrightarrow} H_n \overset{d}{\hookrightarrow} V_n'$, 
where we choose 
\begin{itemize}
 \item $V_n = H^1_0(\Omega_n)$ and $H_n = L^2(\Omega_n)$ for Dirichlet problem
 \item $V_n = H^1(\Omega_n)$ and $H_n = L^2(\Omega_n)$ for Neumann problem.
\end{itemize}
For $t \in (0,T)$, suppose $a_n(t;\cdot, \cdot)$ is a bilinear form on $V_n$ defined by
\begin{equation}
 a_n(t;u,v) := \int_{\Omega_n} [a_{ij}(x,t) \partial_j u + a_i(x,t)u] \partial_i v
                + b_i(x,t) \partial_i u v + c_0(x,t) uv \;dx.
\end{equation}
It follows that for all $n \in \mathbb N$, there exist three constants
$M >0, \alpha >0$ and $\lambda \in \mathbb R$ independent of $t \in [0,T]$ 
such that
\begin{equation}
 \label{eq:biFormContOnN}
    |a_n(t;u,v)| \leq M \|u\|_{V_n} \|v\|_{V_n} ,
\end{equation}
for all $u,v \in V_n$ and
\begin{equation}
 \label{eq:biFormCoerOnN}
    a_n(t;u,u) + \lambda \|u\|^2_{H_n} \geq \alpha \|u\|^2_{V_n} ,
\end{equation}
for all $u \in V_n$.
Given $u_{0,n} \in L^2(D)$ and $f_n \in L^2(D\times (0,T))$, 
let us consider the following boundary value problem in $\Omega_n \times (0,T]$.
\begin{equation}
\label{eq:paraOmegaN}
  \left \{ 
  \begin{aligned}
    \frac{\partial u}{\partial t} + \mathcal A_n(t) u &= f_n(x,t) &&\quad \text { in } \Omega_n \times (0,T]\\
    \mathcal B_n(t) u &= 0  &&\quad \text{ on } \partial \Omega_n \times (0,T]\\
     u(\cdot,0) &= u_{0,n}  &&\quad \text{ in } \Omega_n , \\
  \end{aligned} 
  \right . 
\end{equation}
where $\mathcal A_n$ and $\mathcal B_n$ are operators on $V_n$ given by
\begin{displaymath}
 \mathcal A_n(t) u := - \partial_i[a_{ij}(x,t) \partial_j u + a_i(x,t)u] + b_i(x,t) \partial_i u 
                       + c_0(x,t) u ,
\end{displaymath}
and $\mathcal B_n$ is one of the following 
\begin{displaymath}
\begin{aligned}
 \mathcal B_n(t) u &:= u &&\quad \text{Dirichlet boundary condition}  \\
 \mathcal B_n(t) u &:= [a_{ij}(x,t) \partial_j u + a_i(x,t)u] \;\nu_i
  &&\quad \text{Neumann boundary condition.} \\
\end{aligned}
\end{displaymath}
We wish to show that a sequence of solutions of
the above parabolic equations in $\Omega_n \times (0,T]$ converges to the solution of the
following limit problem
\begin{equation}
\label{eq:paraOmega}
  \left\{ 
  \begin{aligned}
    \frac{\partial u}{\partial t} + \mathcal A(t) u &= f(x,t) &&\quad \text { in } \Omega \times (0,T]\\
    \mathcal B(t) u &= 0  &&\quad \text{ on } \partial \Omega \times (0,T]\\
     u(\cdot,0) &= u_0 &&\quad \text{ in } \Omega. \\
  \end{aligned} 
  \right . 
\end{equation}
However, we will consider the boundary value problems \eqref{eq:paraOmegaN} and
\eqref{eq:paraOmega} in the abstract form. As discussed in Section \ref{sec:prelim}, 
we can write \eqref{eq:paraOmegaN} as
\begin{equation}
\label{eq:paraAbsOmegaN}
  \left \{
  \begin{aligned}
   u'(t) + A_n(t) u &= f_n(t) \quad \text{ for } t \in (0,T] \\
   u(0) &= u_{0,n},\\
  \end{aligned}
  \right .
\end{equation}
where $A_n(t) \in \mathscr L(V_n , V_n')$ is the operator
induced by the bilinear form $a_n(t;,\cdot,\cdot)$. Similarly, we write
\eqref{eq:paraOmega} as
\begin{equation}
\label{eq:paraAbsOmega}
  \left \{
  \begin{aligned}
   u'(t) + A(t) u &= f(t) \quad \text{ for } t \in (0,T] \\
   u(0) &= u_0.\\
  \end{aligned}
  \right .
\end{equation}
Throughout this section, we denote the variational
solution of \eqref{eq:paraAbsOmegaN} by $u_n$ and 
the variational solution of \eqref{eq:paraAbsOmega} by $u$.
We illustrate an application of Mosco convergence 
to obtain stability of variational solutions under domain perturbation.
The proof is motivated by the techniques presented in \cite{MR1404388}. 
However, we replace the notion of convergence of domains ((3.5) and (3.6) in \cite{MR1404388}) by
Mosco convergence. It is not difficult to see that the assumption on domains in \cite{MR1404388}
implies that $H^1_0(\Omega_n)$ converges to $H^1_0(\Omega)$ in the sense of Mosco.

\subsection{Dirichlet problems}
\label{subsec:dirProb}
 
When the domain is perturbed, the variational solutions belong to different function spaces.
We often extend functions by zero outside the domain.
We embed the spaces $H^1_0(\Omega_n)$ into $H^1(D)$ by $v \mapsto \tilde v$,
where $\tilde v = v$ on $\Omega_n$ and $\tilde v = 0$ on $D \backslash \Omega_n$.
Similarly, we may consider the embedding $L^2((0,T),H^1_0(\Omega_n))$ into $L^2((0,T),H^1(D))$
by $w(t) \mapsto \tilde w(t)$ for a.e. $t \in (0,T)$. 
Note that the trivial extension $\tilde v$ also acts on $L^2(\Omega_n)$ into $L^2(D)$.

Let us take $V := H^1(D)$,  $K_n := H^1_0(\Omega_n)$  and $K := H^1_0(\Omega)$, 
and consider Mosco convergence of $K_n$ to $K$. In this case $K_n$
and $K$ are closed and convex subsets of $V$ in the sense of the above embedding. 
In fact, $K_n$ and $K$ are closed subspace of $V$. The main application 
of Theorem \ref{th:equiMosco} is to show that the variational
solution $u_n$ of \eqref{eq:paraAbsOmegaN} converges to the variational solution $u$ of \eqref{eq:paraAbsOmega} by applying 
various Mosco conditions.

\begin{theorem}
 \label{th:weakConvSolDir}
  Suppose $\tilde f_n \rightarrow \tilde f$ in $L^2((0,T),L^2(D))$ and $\tilde u_{0,n} \rightarrow \tilde u_0$ in $L^2(D)$.
  If $H^1_0(\Omega_n)$ converges to $H^1_0(\Omega)$ in the sense of Mosco, then
  $\tilde u_n$ converges weakly to $\tilde u$ in $L^2((0,T),H^1(D))$.
\end{theorem}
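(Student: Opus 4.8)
The plan is to pass to the limit in the weak formulation \eqref{eq:solFormula} for the solutions $u_n$, using the equivalence of Theorem \ref{th:equiMosco} to supply both the admissible test functions and the support information on the limit. Throughout I identify $H^1_0(\Omega_n)$ with its image in $V := H^1(D)$ under trivial extension, so that $\|\tilde v\|_{H^1(D)} = \|v\|_{H^1(\Omega_n)}$ and the bilinear form $a_n(t;\cdot,\cdot)$ on $H^1_0(\Omega_n)$ coincides with the form $a_D(t;\tilde u,\tilde v) := \int_D [a_{ij}\partial_j \tilde u + a_i \tilde u]\partial_i \tilde v + b_i \partial_i \tilde u\,\tilde v + c_0 \tilde u\,\tilde v\,dx$ evaluated on the extensions, since extensions of $H^1_0$ functions carry a vanishing gradient outside $\Omega_n$.

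First I would establish a uniform bound. Because the constants $M,\alpha,\lambda$ in \eqref{eq:biFormContOnN}--\eqref{eq:biFormCoerOnN} are independent of $n$, the a priori estimate \eqref{eq:unifBoundSol} holds with a constant $C$ independent of $n$; combined with $\|u_{0,n}\|_{H_n} = \|\tilde u_{0,n}\|_{L^2(D)}$ and $\|f_n\|_{L^2((0,T),V_n')} \le \|\tilde f_n\|_{L^2((0,T),L^2(D))}$, the convergence hypotheses on $\tilde f_n$ and $\tilde u_{0,n}$ force $\sup_n \|\tilde u_n\|_{L^2((0,T),H^1(D))} < \infty$. Since $L^2((0,T),H^1(D))$ is reflexive, some subsequence satisfies $\tilde u_{n_k}\rightharpoonup \chi$ weakly, and it suffices to identify $\chi = \tilde u$: a bounded sequence in a reflexive space with a unique weak subsequential limit converges weakly to that limit, which then upgrades the subsequence to the full sequence.

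To identify $\chi$, I would first invoke Mosco condition $(M2')$---available because hypothesis (i) of Theorem \ref{th:equiMosco} yields (ii)---to conclude from $\tilde u_{n_k}\in L^2((0,T),H^1_0(\Omega_{n_k}))$ and $\tilde u_{n_k}\rightharpoonup \chi$ that $\chi\in L^2((0,T),H^1_0(\Omega))$, so $\chi = \tilde{\hat u}$ with $\hat u\in L^2((0,T),H^1_0(\Omega))$. Next, fixing $v\in H^1_0(\Omega)$ and $\phi\in\mathscr D([0,T))$, I would use Mosco condition $(M1)$ (= hypothesis (i)) to choose recovery test functions $v_n\in H^1_0(\Omega_n)$ with $v_n\to v$ strongly in $H^1(D)$, and substitute $v_{n_k}$ into \eqref{eq:solFormula} for $u_{n_k}$. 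Every term is then a pairing of a weakly convergent factor (one of $\tilde u_{n_k}$ or $\partial_j\tilde u_{n_k}$, converging weakly in $L^2((0,T),L^2(D))$ since the coordinate maps are bounded and hence weak--weak continuous) against a strongly convergent one (built from $\tilde v_{n_k}$, $\partial_i\tilde v_{n_k}$, the fixed $L^\infty$ coefficients and $\phi$), so each passes to the limit by weak--strong convergence; the right-hand side converges because $\tilde u_{0,n_k}\to\tilde u_0$ and $\tilde f_{n_k}\to\tilde f$ strongly. The limit identity is exactly \eqref{eq:solFormula} for $\hat u$ on $\Omega$, so $\hat u$ is the variational solution of \eqref{eq:paraAbsOmega}; by uniqueness in Theorem \ref{th:ExiUniParEq}, $\hat u = u$ and $\chi = \tilde u$.

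I expect the main obstacle to be the passage to the limit with $n$-dependent test spaces: one cannot simply test \eqref{eq:solFormula} for $u_n$ against a fixed $v\in H^1_0(\Omega)$, and the decisive mechanism is the recovery sequence $v_n\to v$ furnished by $(M1)$. The remaining care lies in the bookkeeping that rewrites each $\Omega_n$-integral and duality pairing as a pairing over $D$ of the extensions, so that weak--strong convergence applies term by term; the constraint $\chi\in L^2((0,T),H^1_0(\Omega))$ from $(M2')$ is precisely what makes the limit identity live on $\Omega$ rather than on all of $D$.
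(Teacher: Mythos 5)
Your proposal is correct and follows essentially the same route as the paper's own proof: a uniform bound from \eqref{eq:unifBoundSol}, extraction of a weak limit, Mosco condition $(M2')$ to place the limit in $L^2((0,T),H^1_0(\Omega))$, recovery test functions from $(M1)$ to pass to the limit in \eqref{eq:solFormula}, and uniqueness to identify the limit and upgrade to the whole sequence. The extra bookkeeping you supply on extensions and weak--strong pairings is exactly what the paper leaves implicit.
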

\begin{proof}
  Since $f_n \rightarrow f$ in $L^2((0,T),L^2(D))$ and $u_{0,n} \rightarrow u_0$ in $L^2(D)$,
 it follows from \eqref{eq:unifBoundSol} that
 $\|u_n \|_{W(0,T,V_n,V_n')}$ is uniformly bounded. Hence 
 $\tilde u_n$ is uniformly bounded in $L^2((0,T),H^1(D))$.
 We can extract a subsequence (denoted again by $u_n$), such that
 $\tilde u_n \rightharpoonup w$ in $L^2((0,T),H^1(D))$.
 Mosco condition $(M2')$ (from Theorem \ref{th:equiMosco}) implies that 
 $w \in L^2((0,T),H^1_0(\Omega))$. 
 It remains to show that  $w=u$ in $L^2((0,T),H^1_0(\Omega))$.
 
 Let $\xi \in H^1_0(\Omega)$ and $\phi \in \mathscr D([0,T))$. 
 Mosco condition ($M1)$ implies that there exists $\xi_n \in H^1_0(\Omega_n)$ such that
 $\tilde \xi_n \rightarrow \tilde \xi$ in $H^1(D)$.
 As $u_n$ is the variational solution of \eqref{eq:paraAbsOmegaN}, we get from \eqref{eq:solFormula} that
  \begin{displaymath}
   \begin{aligned}
    - \int_0^T (u_n(t)|\xi_n) \phi'(t) \;dt + \int_0^T a_n(t; u_n(t), \xi_n ) \phi(t) \;dt \\
    = (u_{0,n}|\xi_n) \phi(0) + \int_0^T \langle f_n(t), \xi_n \rangle \phi(t) \;dt .
   \end{aligned}
  \end{displaymath}
   By letting $n \rightarrow \infty$, we get
   \begin{equation}
   \label{eq:limitIsWsolDir}
   \begin{aligned}
    - \int_0^T (w(t)|\xi) \phi'(t) \;dt + \int_0^T a(t; w(t), \xi ) \phi(t) \;dt \\
    = (u_{0}|\xi) \phi(0) + \int_0^T \langle f(t), \xi \rangle \phi(t) \;dt .
   \end{aligned}
  \end{equation}
Hence $w$ is a variational solution of \eqref{eq:paraAbsOmega}. By
the uniqueness of solution, we conclude that $w=u$ in $L^2((0,T),H^1_0(\Omega))$ and the whole 
sequence converges. 
\end{proof}
\begin{lemma}
\label{lem:L2weakLimUnDir}
Suppose $\tilde f_n \rightarrow \tilde f$ in $L^2((0,T),L^2(D))$ and
$\tilde u_{0,n}\rightarrow \tilde u_0$ in $L^2(D)$. If $H^1_0(\Omega_n)$
converges to $H^1_0(\Omega)$ in the sense of Mosco, then
for each $t \in [0,T]$ we have $\tilde u_n(t)_{|_{\Omega}} \rightharpoonup  u(t)$ in $L^2(\Omega)$.
\end{lemma}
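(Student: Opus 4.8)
The statement upgrades the weak convergence $\tilde u_n \rightharpoonup u$ in $L^2((0,T),H^1(D))$ (from Theorem \ref{th:weakConvSolDir}) to \emph{pointwise-in-time} weak convergence of the $L^2(\Omega)$-valued traces $\tilde u_n(t)$. The plan is to exploit the additional regularity coming from the uniform $W(0,T,V_n,V_n')$ bound established in the proof of Theorem \ref{th:weakConvSolDir}, which controls not only $u_n$ but also its time derivative $u_n'$. Concretely, $\|u_n\|_{W(0,T,V_n,V_n')}$ is uniformly bounded, so the trivial extensions $\tilde u_n$ are bounded in $L^2((0,T),H^1(D))$ and $\tilde u_n'$ are bounded in $L^2((0,T),H^{-1}(D))$. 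Since the embedding $W((0,T),H^1(D),H^{-1}(D)) \hookrightarrow C([0,T],L^2(D))$ is continuous (Theorem I.1.3.1 of \cite{MR1156075}), the sequence $(\tilde u_n)$ is bounded in $C([0,T],L^2(D))$; in particular, for each fixed $t$ the family $\{\tilde u_n(t)\}_n$ is bounded in $L^2(D)$ and hence in $L^2(\Omega)$ after restriction.

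First I would fix $t \in [0,T]$ and $\psi \in L^2(\Omega)$, extend $\psi$ by zero to $\tilde\psi \in L^2(D)$, and aim to show $(\tilde u_n(t)|\tilde\psi)_{L^2(D)} \to (u(t)|\tilde\psi)_{L^2(D)}$. The natural device is to test the weak formulation against a time-dependent function of the form $v\,\phi(t)$ and use the integration-by-parts identity \eqref{eq:intByParts} to convert the bound on $\tilde u_n'$ into pointwise information. Using \eqref{eq:intByParts} with $\tilde\psi$ (viewed as a constant $L^2(D)$-valued function) one can express $(\tilde u_n(t)|\tilde\psi) - (\tilde u_n(s)|\tilde\psi)$ as the time-integral of $\langle \tilde u_n'(r),\tilde\psi\rangle\,dr$ over $[s,t]$, giving a uniform modulus of continuity in $t$ for the scalar functions $t\mapsto (\tilde u_n(t)|\tilde\psi)$ that is independent of $n$ (since $\|\tilde u_n'\|_{L^2((0,T),H^{-1}(D))}$ is uniformly bounded). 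Combined with the boundedness in $L^2(D)$, an Arzelà--Ascoli-type argument for the scalar functions $g_n(t) := (\tilde u_n(t)|\tilde\psi)$ yields a subsequence converging uniformly on $[0,T]$; the identification of the limit as $(u(t)|\tilde\psi)$ follows from the already-established weak convergence $\tilde u_n \rightharpoonup u$ in $L^2((0,T),H^1(D))$ by integrating against $\phi \in \mathscr D((0,T))$ and passing to the limit. Since the limit is independent of the subsequence, the whole sequence converges.

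The main obstacle will be handling the dependence on the test direction $\tilde\psi$ uniformly, so as to pass from convergence against each fixed $\psi$ to the pointwise weak convergence claimed for \emph{every} $t$ simultaneously. The equicontinuity argument above is scalar and $\psi$-dependent, so one must either run it for a countable dense set of $\psi$ in $L^2(\Omega)$ and use the uniform $L^2(D)$-bound on $\tilde u_n(t)$ to extend by density, or argue directly that the family $t \mapsto \tilde u_n(t)$ is uniformly weakly equicontinuous into $L^2(D)$. I would favour the density approach: choose a countable dense subset $\{\psi_j\}$ of $L^2(\Omega)$, apply the equicontinuity-plus-boundedness argument to each $g_n^{(j)}(t) := (\tilde u_n(t)|\tilde\psi_j)$, extract a diagonal subsequence along which all $g_n^{(j)}$ converge, and then use the uniform bound $\sup_n \|\tilde u_n(t)\|_{L^2(D)} < \infty$ together with density to upgrade to weak convergence $\tilde u_n(t) \rightharpoonup u(t)$ in $L^2(\Omega)$ for each $t$. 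The one subtlety to check carefully is that the restriction map $L^2(D) \to L^2(\Omega)$ and the zero-extension $L^2(\Omega)\to L^2(D)$ are adjoint, so that testing the $L^2(D)$-limit against $\tilde\psi$ is exactly testing the $L^2(\Omega)$-restriction against $\psi$; this is where the compatibility of the extension-by-zero embedding with the $L^2$ inner products is used.
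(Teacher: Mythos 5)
Your proposal has a genuine gap at its very first step: the claim that the trivial extensions satisfy a uniform bound on $\tilde u_n'$ in $L^2((0,T),H^{-1}(D))$, so that $\tilde u_n \in W((0,T),H^1(D),H^{-1}(D))$. Extension by zero does not commute with taking time derivatives into the dual space. The identity $\frac{d}{dt}(u_n(t)\,|\,w) = \langle u_n'(t),w\rangle = \langle f_n(t),w\rangle - a_n(t;u_n(t),w)$ is only available for test functions $w \in H^1_0(\Omega_n)$; for $v \in H^1_0(D)$ the restriction $v|_{\Omega_n}$ does not vanish on $\partial\Omega_n$, and testing the Dirichlet problem with it produces an uncontrolled conormal boundary term (formally $\int_{\partial\Omega_n}\partial_\nu u_n\, v\,dS$), which is not bounded by $\|v\|_{H^1(D)}$ uniformly in $n$ --- especially since no regularity of $\Omega_n$ is assumed. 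The same defect invalidates your equicontinuity step: the pairing $\langle \tilde u_n'(r),\tilde\psi\rangle$ for $\psi \in L^2(\Omega)$ extended by zero is simply not defined, because $\tilde\psi|_{\Omega_n}\notin H^1_0(\Omega_n)$ (and not even in $H^1$). Note that nowhere in your argument for the fixed-$t$ convergence do you invoke Mosco condition $(M1)$, which is a strong sign something is missing: producing admissible test functions $\xi_n\in H^1_0(\Omega_n)$ approximating a given $\xi\in H^1_0(\Omega)$ is exactly what $(M1)$ is for, and it cannot be bypassed here.

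The paper's route avoids all of this and is also simpler than your Arzel\`a--Ascoli/diagonal scheme. Fix $t\in[0,T]$. The a priori estimate \eqref{eq:boundOfSol} gives $\max_{t}\|\tilde u_n(t)\|_{L^2(D)}\le M$, so along a subsequence $\tilde u_n(t)|_{\Omega}\rightharpoonup w$ in $L^2(\Omega)$ for some $w$. To identify $w$, take $\xi\in H^1_0(\Omega)$, use $(M1)$ to get $\xi_n\in H^1_0(\Omega_n)$ with $\tilde\xi_n\to\tilde\xi$ in $H^1(D)$, and test the weak formulation of the equation for $u_n$ on $(0,t]$ against $\xi_n\,\phi$ with $\phi\in\mathscr D((0,t])$ not vanishing at $t$; the endpoint term $-(\tilde u_n(t)|\tilde\xi_n)\phi(t)$ then appears explicitly, and passing to the limit (using the already established weak convergence $\tilde u_n\rightharpoonup\tilde u$ in $L^2((0,T),H^1(D))$ and the convergence of data) yields $(w|\xi)=(u(t)|\xi)$ for all $\xi\in H^1_0(\Omega)$. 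Density of $H^1_0(\Omega)$ in $L^2(\Omega)$ gives $w=u(t)$, and uniqueness of the limit upgrades the subsequential convergence to the full sequence. If you want to repair your write-up, replace the $H^{-1}(D)$ bound and the scalar equicontinuity argument by this identification-through-the-weak-formulation step; the restriction/zero-extension adjointness observation at the end of your proposal is fine and is implicitly used in the paper as well.
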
 
\begin{proof}
Since $\tilde f_n$ is uniformly bounded in $L^2((0,T),L^2(D))$ and 
  $\tilde u_{0,n}$ is uniformly bounded in $L^2(D)$, we have from \eqref{eq:boundOfSol} that
\begin{displaymath}
 \max_{t \in [0,T]} \|\tilde u_n(t)\|_{L^2(D)} \leq M,
\end{displaymath}
for some $M > 0$. Hence for a subsequence denoted again by $u_n(t)$,
there exists $w \in L^2(\Omega)$ such that $\tilde u_n(t)_{|_{\Omega}} \rightharpoonup  w$ in $L^2(\Omega)$.
Let $\xi \in H^1_0(\Omega)$ and $\phi \in \mathscr D((0,t])$.
 Mosco condition $(M1)$ implies that there exists $\xi_n \in H^1_0(\Omega_n)$ such that 
$\tilde \xi_n \rightarrow \tilde \xi$ in $H^1(D)$. 
As $u_n$ is the variational solution of \eqref{eq:paraAbsOmegaN}, we have
\begin{displaymath}
\begin{aligned}
 - \int_0^t (u_n(s)|\xi_n) \phi'(s) \;ds + \int_0^t a_n(s;u_n(s),\xi_n) \phi(s) \;ds \\
= -(\tilde u_n(t)| \tilde \xi_n)_{L^2(D)} \phi(t) + \int_0^t  \langle f_n(s), \xi_n \rangle \phi(s) \;ds.
 \end{aligned}
\end{displaymath}
Now
\begin{displaymath}
 (\tilde u_{0,n})| \tilde \xi_n)_{L^2(D)} = ( \tilde u_{0,n} | \tilde \xi_n)_{L^2(\Omega)}+
 (\tilde u_{0,n}| \tilde \xi_n)_{L^2(D \backslash \Omega)}.
\end{displaymath}
 Since $\tilde \xi_n \rightarrow \tilde \xi$ in $L^2(D)$, we have $\tilde \xi_n |_{\Omega} \rightarrow  \xi$ 
 in $L^2(\Omega)$ and $ \tilde \xi_n |_{(D \backslash \Omega)} \rightarrow 0$ in
 $L^2(D \backslash \Omega)$.  Applying the dominated convergence theorem in the second term above
 and using the weak convergence of initial condition $u_{0,n}$ in the first term above, we
 see that 
 \begin{displaymath}
  (u_{0,n}| \xi_n)_{L^2(\Omega_n)} \rightarrow ( u_0 |\xi)_{L^2(\Omega)}.
 \end{displaymath}
%
%
Hence, 
\begin{equation}
\label{eq:weakL2}
 \begin{aligned}
 - \int_0^t (u(s)|\xi) \phi'(s) \;ds + \int_0^t a(s;u(s),\xi) \phi(s) \;ds \\
= -(w|\xi)_{L^2(\Omega)} \phi(t) + \int_0^t  \langle f(s), \xi \rangle \phi(s) \;ds,
 \end{aligned}
\end{equation}
as $n \rightarrow \infty$. As $u$ is the variational solution of \eqref{eq:paraAbsOmega}, 
a similar equation holds with $(w| \xi)_{L^2(\Omega)}$ 
replaced by $(u(t)|\xi)_{L^2(\Omega)}$. 
Therefore $(w| \xi)_{L^2(\Omega)}=(u(t)|\xi)_{L^2(\Omega)}$ for all
$\xi \in H^1_0(\Omega)$.
By the density of $H^1_0(\Omega)$ in $L^2(\Omega)$, $w =u(t)$. 
Hence, for subsequences $\tilde u_n(t)_{|_{\Omega}} \rightharpoonup u(t)$ in $L^2(\Omega)$.
By the uniqueness, the whole sequence $\tilde u_n(t)_{|_{\Omega}}$
converges weakly to $u(t)$ in $L^2(\Omega)$.
\end{proof} 
\begin{remark}
\label{rem:dirWeakInitialF}
In fact, we only require that $\tilde f_n |_{\Omega} \rightharpoonup f$ weakly in
$L^2((0,T),L^2(\Omega))$ and $\tilde u_{0,n}|_{\Omega} \rightharpoonup  u_0$ weakly in $L^2(\Omega)$ to obtain
the conclusion of Theorem \ref{th:weakConvSolDir} and Lemma \ref{lem:L2weakLimUnDir}
as done in \cite{MR1404388}. 
\end{remark}
Next we show the strong convergence of solutions. The assumptions on strong 
convergence of initial values $u_{0,n}$ and inhomogeneous data $f_n$ are 
required in the proof below (see also Remark \ref{rem:NoNeedStrongConvDir} below).
\begin{theorem}
\label{th:strongConvSolDir}
Suppose $\tilde f_n \rightarrow \tilde f$ in $L^2((0,T),L^2(D))$ and
$\tilde u_{0,n}\rightarrow \tilde u_0$ in $L^2(D)$. If $H^1_0(\Omega_n)$
converges to $H^1_0(\Omega)$ in the sense of Mosco, then
$\tilde u_n$ converges strongly to $\tilde u$ in $L^2((0,T),H^1(D))$.
\end{theorem}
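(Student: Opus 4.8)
The plan is to exploit that $L^2((0,T),H^1(D))$ is a Hilbert space, so the weak convergence $\tilde u_n \rightharpoonup \tilde u$ already supplied by Theorem \ref{th:weakConvSolDir} upgrades to strong convergence as soon as I establish convergence of norms. Recalling that we may take $\lambda = 0$ in \eqref{eq:biFormCoerOnN}, and noting that extension by zero gives $\|\tilde v\|_{H^1(D)} = \|v\|_{V_n}$ and $\nabla\tilde v = \widetilde{\nabla v}$ for $v\in H^1_0(\Omega_n)$, it suffices to show that $\int_0^T\int_D a_{ij}\,\partial_j\tilde u_n\,\partial_i\tilde u_n\,dx\,dt$ and $\int_0^T\int_D \tilde u_n^2\,dx\,dt$ converge to the corresponding integrals of $\tilde u$; the point is that the weighted gradient form is, by the ellipticity of $a_{ij}$ and $a_{ij}\in L^\infty$, an inner product on $L^2((0,T),L^2(D)^N)$ equivalent to the usual one.

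First I derive an energy identity. Testing \eqref{eq:paraAbsOmegaN} with $u_n$ and using the integration by parts formula \eqref{eq:intByParts} gives
\[
\int_0^T a_n(t;u_n,u_n)\,dt=\tfrac12\|u_{0,n}\|_{H_n}^2-\tfrac12\|u_n(T)\|_{H_n}^2+\int_0^T\langle f_n(t),u_n(t)\rangle\,dt.
\]
I then pass to the $\limsup$ term by term: $\|u_{0,n}\|_{H_n}^2=\|\tilde u_{0,n}\|_{L^2(D)}^2\to\|\tilde u_0\|_{L^2(D)}^2$ by the strong convergence of the data; the pairing $\int_0^T\langle f_n,u_n\rangle=\int_0^T(\tilde f_n|\tilde u_n)_{L^2(D)}\to\int_0^T(\tilde f|\tilde u)_{L^2(D)}$ since $\tilde f_n\to\tilde f$ strongly while $\tilde u_n\rightharpoonup\tilde u$ weakly in $L^2((0,T),L^2(D))$; and by Lemma \ref{lem:L2weakLimUnDir} at $t=T$ together with weak lower semicontinuity of the $L^2$ norm, $\liminf_n\|u_n(T)\|_{H_n}^2\ge\|u(T)\|_H^2$. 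Writing the analogous energy identity for the limit solution $u$, these three facts combine to give the key bound
\[
\limsup_{n\to\infty}\int_0^T a_n(t;u_n,u_n)\,dt\le\int_0^T a(t;u,u)\,dt.
\]

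The crux is then to promote the pointwise weak convergence of Lemma \ref{lem:L2weakLimUnDir} to strong convergence $\tilde u_n\to\tilde u$ in $L^2((0,T),L^2(D))$, and this is where the Dirichlet structure is essential: extension by zero maps $H^1_0(\Omega_n)$ into the fixed space $H^1_0(D)$, so the uniform bound $\|u_n\|_{W(0,T,V_n,V_n')}\le C$ from \eqref{eq:unifBoundSol} places $\tilde u_n$ in a bounded subset of $L^2((0,T),H^1_0(D))$ with $\tilde u_n'$ controlled in $L^2((0,T),H^{-1}(D))$, whence an Aubin--Lions compactness argument (cf. \cite[Lemma 2.1]{MR1404388}) yields strong $L^2((0,T),L^2(D))$ convergence. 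I expect this compactness step to be the main obstacle, and it is precisely the step that does not survive for Neumann data, since the zero-extension is no longer $H^1_0(D)$-valued. Granting it, the lower-order part $R_n:=\int_0^T\int_D[a_i\tilde u_n\partial_i\tilde u_n+b_i\partial_i\tilde u_n\,\tilde u_n+c_0\tilde u_n^2]\,dx\,dt$ converges to the corresponding limit $R$, each term being the pairing of a strongly convergent factor with a weakly convergent one. Subtracting $R_n$ from the key bound leaves $\limsup_n\int_0^T\int_D a_{ij}\partial_j\tilde u_n\partial_i\tilde u_n\le\int_0^T\int_D a_{ij}\partial_j\tilde u\,\partial_i\tilde u$; since the weighted gradient form is convex and continuous, hence weakly lower semicontinuous, the reverse $\liminf$ inequality also holds, so these energies converge. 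Convergence of norms together with weak convergence in the associated inner product forces $\nabla\tilde u_n\to\nabla\tilde u$ strongly in $L^2((0,T),L^2(D)^N)$, and combined with the strong $L^2$ convergence above this yields $\tilde u_n\to\tilde u$ strongly in $L^2((0,T),H^1(D))$, as required.
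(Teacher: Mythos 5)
Your argument is essentially correct, but it follows a genuinely different route from the paper. The paper never passes through strong convergence in $L^2((0,T),L^2(D))$ as an intermediate step: it uses Mosco condition $(M1')$ to produce a recovery sequence $w_n \in L^2((0,T),H^1_0(\Omega_n))$ with $\tilde w_n \to \tilde u$ in $L^2((0,T),H^1(D))$, and then controls the single quantity $d_n(t) = \tfrac12\|\tilde u_n(t)-\tilde u(t)\|^2_{L^2(D)} + \alpha\int_0^t\|\tilde u_n-\tilde w_n\|^2_{H^1(D)}$ via coercivity of the full form $a_n$, expanding $a_n(s;u_n-w_n,u_n-w_n)$ into four terms whose limits are identified by the weak convergence of $\tilde u_n$, the strong convergence of $\tilde w_n$, Lemma \ref{lem:L2weakLimUnDir}, and the same energy identity you use. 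You instead prove convergence of the energy $\int_0^T a_n(t;u_n,u_n)\,dt$, peel off the lower-order terms with a compactness step, and recover strong gradient convergence from norm-plus-weak convergence for the (symmetrized) principal part. What your approach buys is conceptual transparency (weak convergence plus norm convergence in a Hilbert space); what the paper's approach buys is independence from the compactness lemma of \cite[Lemma 2.1]{MR1404388} --- this is deliberate, as Remark \ref{rem:NoNeedStrongConvDir} explains, so that the identical argument transfers verbatim to the Neumann case, and so that the proof of Theorem \ref{th:unifConvSolDir} can reuse the uniformity of $d_n(T)$ in $t$. You correctly diagnose that your compactness step is exactly what breaks for Neumann data.

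Two points deserve care. First, your statement that ``$\tilde u_n'$ is controlled in $L^2((0,T),H^{-1}(D))$'' is not literally correct: $u_n'(t)$ is a functional on $H^1_0(\Omega_n)$, and there is no bounded restriction $H^1_0(D)\to H^1_0(\Omega_n)$, so the zero-extension of $u_n$ does not come with a uniform $H^{-1}(D)$ bound on its time derivative. The classical Aubin--Lions lemma therefore does not apply off the shelf; you must genuinely invoke the bespoke compactness result \cite[Lemma 2.1]{MR1404388}, which is formulated precisely for a sequence bounded in $L^2((0,T),H^1_0(D))$ with derivatives bounded only in $L^2((0,T),H^{-1}(\Omega_n))$. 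With that citation your step stands. Second, since the paper does not assume $a_{ij}=a_{ji}$, the weighted gradient form is not an inner product; but your conclusion survives because the quadratic form sees only the symmetric part of $a_{ij}$, which is still bounded and elliptic, or more directly because $\limsup_n\int\!\!\int a_{ij}\,\partial_j(\tilde u_n-\tilde u)\,\partial_i(\tilde u_n-\tilde u)\le 0$ follows from your energy bound together with the convergence of the cross terms under weak gradient convergence, and ellipticity then forces $\|\nabla(\tilde u_n-\tilde u)\|_{L^2}\to 0$. Spelled out this way, the argument is complete.
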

\begin{proof}
We have $\tilde u_n$ converges weakly to $\tilde u$ 
in $L^2((0,T),H^1(D))$ from Theorem \ref{th:weakConvSolDir}.
Mosco condition $(M1')$ (from Theorem \ref{th:equiMosco}) implies that
there exists $w_n \in L^2((0,T),H^1_0(\Omega_n))$
such that $\tilde w_n \rightarrow \tilde u$ in $L^2((0,T),H^1(D))$.
For $t \in [0,T]$, we consider 
\begin{equation}
\label{eq:dNDir}
 \begin{aligned}
  d_n(t) &= \frac{1}{2} \|\tilde u_n(t) - \tilde u(t) \|^2_{L^2(D)}
          + \alpha \int_0^t \|\tilde u_n(s) - \tilde w_n(s) \|^2_{H^1(D)} ds.
 \end{aligned}
\end{equation}
By \eqref{eq:biFormCoerOnN} (with $\lambda =0$), we have
\begin{equation}
\label{eq:dNleqDir}
 \begin{aligned}
 d_n(t) &\leq \frac{1}{2} \|\tilde u_n(t)\|^2_{L^2(D)} + \int_0^t a_n(s;u_n(s),u_n(s)) \;ds \\
        & \quad + \frac{1}{2} \|\tilde u(t)\|^2_{L^2(D)} + \int_0^t a_n(s; w_n(s), w_n(s)) \;ds \\
        & \quad -(\tilde u_n(t)| \tilde u(t))_{L^2(D)} - \int_0^t a_n(s; u_n(s),w_n(s)) \;ds \\
        & \quad - \int_0^t a_n(s; w_n(s), u_n(s)) \;ds,\\
 \end{aligned}
\end{equation}
for all $n \in \mathbb N$.
It can be easily seen from the weak convergence of $\tilde u_n$ and the strong
convergence of $\tilde w_n$ to $\tilde u$ in $L^2((0,T),H^1(D))$ that
\begin{equation}
\label{eq:wUnsWnDir}
\begin{aligned}
 &\lim_{n \rightarrow \infty} 
      \Big [\int_0^t a_n(s;u_n(s),w_n(s)) \;ds + \int_0^t a_n(s;w_n(s),u_n(s)) \;ds \Big ] \\
  & \quad = 2 \int_0^t a(s;u(s),u(s)) \;ds ,
\end{aligned}
\end{equation}
and
\begin{equation}
\label{eq:sWnWnDir}
 \lim_{n \rightarrow \infty} \int_0^t a_n(s;w_n(s),w_n(s)) \;ds
   = \int_0^t a(s;u(s),u(s)) \;ds.
\end{equation}
Also, by lemma \ref{lem:L2weakLimUnDir}, we have
\begin{equation}
\label{eq:limUnNormDir}
\lim_{n \rightarrow \infty} (\tilde u_n(t) | \tilde u(t))_{L^2(D)}
 = \lim_{n \rightarrow \infty} (\tilde u_n(t)_{|_{\Omega}} | u(t))_{L^2(\Omega)}
 = \| u(t)\|^2_{L^2(\Omega)}.
\end{equation}
Finally, as $u_n$ is the variational solution of \eqref{eq:paraAbsOmegaN} we get from
\eqref{eq:intByParts} that
\begin{displaymath}
 \begin{aligned}
 \frac{1}{2} \|u_n(t)\|^2_{L^2(\Omega_n)}+ \int_0^t a_n(s; u_n(s),u_n(s)) \;ds \\
 =\frac{1}{2} \|u_n(0)\|^2_{L^2(\Omega_n)} + \int_0^t \langle f_n(s), u_n(s) \rangle \;ds.
 \end{aligned}
\end{displaymath}
By the assumption that $\tilde u_{0,n} \rightarrow \tilde u_0$ strongly in $L^2(D)$ 
and $\tilde f_n \rightarrow \tilde f$ strongly in $L^2((0,T),L^2(D))$, we get
\begin{equation}
\label{eq:limUnPlusAnDir}
 \begin{aligned}
& \lim_{n \rightarrow \infty} \Big [ \frac{1}{2} \|u_n(t)\|^2_{L^2(\Omega_n)}+ \int_0^t a_n(s; u_n(s),u_n(s)) \;ds \Big ]\\
& \quad  =\frac{1}{2} \|u(0)\|^2_{L^2(\Omega)} + \int_0^t \langle f(s), u(s) \rangle \;ds \\
& \quad =\frac{1}{2} \|u(t)\|^2_{L^2(\Omega)}+ \int_0^t a(s; u(s),u(s)) \;ds.
 \end{aligned}
\end{equation}
Hence, it follows from \eqref{eq:dNleqDir} -- \eqref{eq:limUnPlusAnDir} that 
$d_n(t) \rightarrow 0$ for all $t \in [0,T]$. This shows pointwise
convergence of $\tilde u_n(t)$ to $\tilde u(t)$ in $L^2(D)$.
Moreover, by taking $t=T$ we get
\begin{displaymath}
\begin{aligned}
 &\int_0^T \|\tilde u_n(s)-\tilde u(s)\|^2_{H^1(D)} \;ds  \\
 & \quad \leq \int_0^T \|\tilde u_n(s)-\tilde w_n(s)\|^2_{H^1(D)} \;ds
  + \int_0^T \|\tilde w_n(s)-\tilde u(s)\|^2_{H^1(D)} \;ds \\
 & \quad \rightarrow 0, 
\end{aligned}
\end{displaymath}
as $n \rightarrow \infty$. This proves the strong convergence $\tilde u_n \rightarrow \tilde u$
in $L^2((0,T),H^1(D))$.
\end{proof}
In the next theorem we prove convergence of solutions in a stronger norm.
We show that Mosco convergence is suf\mbox{}ficient for uniform convergence of 
solutions in $L^2(D)$ with respect to $t \in [0,T]$. 
We require the following result
on Mosco convergence of $H^1_0(\Omega_n)$ to $H^1_0(\Omega)$
 \cite[Proposition 6.3]{MR1955096}.
\begin{lemma}
\label{lem:M1capH10}
The following statements are equivalent.
\begin{enumerate}
\item  Mosco condition $(M1)$: for every $w \in H^1_0(\Omega)$, there
  exists a sequence $w_n \in H^1_0(\Omega_n)$ such that
  $\tilde w_n \rightarrow \tilde w$ in $H^1(D)$.
\item  $\text{\upshape{cap}} (K \cap \Omega_n^c) \rightarrow 0$ as $n \rightarrow \infty$
   for all compact set $K \subset \Omega$.
\end{enumerate}
\end{lemma}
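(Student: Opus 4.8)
The plan is to prove the two implications separately, using the standard characterisation of $H^1_0$ in terms of the Sobolev $H^1$-capacity relative to $D$: for $E \subset D$ one sets $\mathrm{cap}(E) = \inf\{\|v\|_{H^1(D)}^2 : v \in H^1_0(D),\ v \ge 1 \text{ quasi-everywhere on } E\}$, and a function $v \in H^1_0(D)$ lies in $H^1_0(\Omega_n)$ precisely when its quasi-continuous representative vanishes quasi-everywhere on $\Omega_n^c$. I will also use that $C^\infty_c(\Omega)$ is dense in $H^1_0(\Omega)$, together with the elementary observation that the set of $w \in H^1_0(\Omega)$ for which the conclusion of $(M1)$ holds is closed (if $w^{(j)} \to w$ and each $w^{(j)}$ admits an approximating sequence $w_n^{(j)} \in H^1_0(\Omega_n)$, a diagonal choice $w_n := w_n^{(j(n))}$ works for $w$). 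Hence in proving $(2) \Rightarrow (1)$ it is enough to treat $w \in C^\infty_c(\Omega)$.

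For $(2) \Rightarrow (1)$, fix $w \in C^\infty_c(\Omega)$ and put $K := \supp w$, a compact subset of $\Omega$. Since $\mathrm{cap}(K \cap \Omega_n^c) \to 0$, I choose near-optimal potentials and truncate them to $[0,1]$ to obtain $z_n \in H^1_0(D)$ with $0 \le z_n \le 1$, with $z_n = 1$ quasi-everywhere on $K \cap \Omega_n^c$, and with $\|z_n\|_{H^1(D)} \to 0$ (truncation to $[0,1]$ does not increase the $H^1$-norm). Setting $w_n := w(1 - z_n)$, the function $w_n$ is supported in $K$ and vanishes quasi-everywhere on $K \cap \Omega_n^c$, hence quasi-everywhere on $\Omega_n^c$, so $w_n \in H^1_0(\Omega_n)$. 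Finally $w_n - w = -w z_n$, and from $\nabla(w z_n) = z_n \nabla w + w \nabla z_n$ together with $z_n \to 0$ in $H^1(D)$ and $w, \nabla w \in L^\infty$ one gets $\|w_n - w\|_{H^1(D)} \to 0$, which is exactly $(M1)$ for this $w$.

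For $(1) \Rightarrow (2)$, fix a compact $K \subset \Omega$ and pick $w \in C^\infty_c(\Omega)$ with $0 \le w \le 1$ and $w \equiv 1$ on a neighbourhood of $K$, so that $w \in H^1_0(\Omega)$. By $(M1)$ there exist $w_n \in H^1_0(\Omega_n)$ with $\tilde w_n \to \tilde w$ in $H^1(D)$. Since $w_n$ vanishes quasi-everywhere on $\Omega_n^c$ while $w \equiv 1$ near $K$, the function $w - w_n$ satisfies $w - w_n \ge 1$ quasi-everywhere on the compact set $K \cap \Omega_n^c$; after replacing it by its truncation $\min\{(w-w_n)^+,1\}$ if desired, it is an admissible competitor for the capacity of that set. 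Therefore $\mathrm{cap}(K \cap \Omega_n^c) \le \|w - w_n\|_{H^1(D)}^2 \to 0$, which is $(2)$.

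The main obstacle is not the soft architecture of the two implications but the potential-theoretic bookkeeping that makes them rigorous: identifying membership in $H^1_0(\Omega_n)$ with quasi-everywhere vanishing on $\Omega_n^c$, so that the competitors $w(1 - z_n)$ and $w - w_n$ genuinely lie in the required spaces, and building the cut-offs $z_n$ from the capacity bound while keeping them in $[0,1]$ and small in $H^1(D)$. These are precisely the steps that force one to argue with quasi-continuous representatives rather than mere $L^2$-classes; all of it is standard and is carried out in \cite[Proposition 6.3]{MR1955096}, but this is where the real content of the lemma resides.
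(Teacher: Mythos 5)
Your argument is correct: both implications are the standard capacity argument --- for $(2)\Rightarrow(1)$ reduce to $w\in C^\infty_c(\Omega)$ by density and a diagonal extraction, then cut off by a truncated near-optimal potential $z_n$; for $(1)\Rightarrow(2)$ use the truncation of $w-\tilde w_n$ as a competitor --- resting on the Havin--Bagby characterisation of $H^1_0$ via quasi-everywhere vanishing of quasi-continuous representatives. The paper itself offers no proof of this lemma, stating it with a citation to \cite[Proposition 6.3]{MR1955096}, and your proof is essentially the one carried out there, so the only caveat is that the potential-theoretic facts you defer to (the $H^1_0$ characterisation and the equivalence of the quasi-everywhere and neighbourhood definitions of capacity) are precisely the content of that reference.
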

\begin{theorem}
\label{th:unifConvSolDir}
Suppose $\tilde f_n \rightarrow \tilde f$ in $L^2((0,T),L^2(D))$ and
$\tilde u_{0,n}\rightarrow \tilde u_0$ in $L^2(D)$. If $H^1_0(\Omega_n)$
converges to $H^1_0(\Omega)$ in the sense of Mosco, then
$\tilde u_n$ converges strongly to $\tilde u$ in $C([0,T],L^2(D))$.
\end{theorem}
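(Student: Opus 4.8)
The plan is to combine the strong convergence $\tilde u_n \to \tilde u$ in $L^2((0,T),H^1(D))$ furnished by Theorem \ref{th:strongConvSolDir} with a uniform-in-$t$ sharpening of the energy estimates, and then to recover the desired convergence by expanding the square $\|\tilde u_n(t)-\tilde u(t)\|^2_{L^2(D)}$. Recall that the proof of Theorem \ref{th:strongConvSolDir} already gives, for each fixed $t\in[0,T]$, that $d_n(t)\to 0$, whence $\tilde u_n(t)\to\tilde u(t)$ in $L^2(D)$ pointwise in $t$; the whole task is to upgrade this to uniform convergence. Throughout I use that, for $u_n \in H^1_0(\Omega_n)$, the trivial extension satisfies $\|u_n(t)\|_{L^2(\Omega_n)}=\|\tilde u_n(t)\|_{L^2(D)}$ and $a_n(s;u_n,u_n)=\bar a(s;\tilde u_n,\tilde u_n)$, where $\bar a$ denotes the form taken over all of $D$.

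First I would establish uniform convergence of the norms, i.e. $\sup_{t\in[0,T]}\big|\,\|\tilde u_n(t)\|^2_{L^2(D)}-\|\tilde u(t)\|^2_{L^2(D)}\,\big|\to 0$. Applying \eqref{eq:intByParts} to the variational solution of \eqref{eq:paraAbsOmegaN} gives the energy identity $\tfrac12\|\tilde u_n(t)\|^2_{L^2(D)}=\tfrac12\|u_n(0)\|^2_{L^2(\Omega_n)}+\int_0^t\big[\langle f_n(s),u_n(s)\rangle-a_n(s;u_n(s),u_n(s))\big]\,ds$. Since $\tilde f_n\to\tilde f$ in $L^2((0,T),L^2(D))$, since $\tilde u_n\to\tilde u$ strongly in $L^2((0,T),H^1(D))$, and since the coefficients are bounded, the bracketed integrand converges in $L^1((0,T))$ to $\langle f(s),u(s)\rangle-a(s;u(s),u(s))$; because $\sup_t|\int_0^t g_n|\leq\|g_n\|_{L^1((0,T))}$ and $\|u_n(0)\|^2_{L^2(\Omega_n)}\to\|u_0\|^2_{L^2(\Omega)}$, the right-hand side converges uniformly in $t$. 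Comparing with the same identity for $u$ yields the claim.

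The main obstacle is to control the cross term uniformly, namely to show $\sup_t\,(\tilde u(t)-\tilde u_n(t)\,|\,\tilde u(t))_{L^2(D)}\to 0$, for which the naive bound $|(\tilde u(t)-\tilde u_n(t)\,|\,\tilde u(t))|\leq\|\tilde u(t)-\tilde u_n(t)\|\,\|\tilde u(t)\|$ is circular. I would first prove a uniform-in-$t$ tested convergence: fixing $\xi\in H^1_0(\Omega)$ and choosing, by Mosco condition $(M1)$, a recovery sequence $\xi_n\in H^1_0(\Omega_n)$ with $\tilde\xi_n\to\tilde\xi$ in $H^1(D)$, formula \eqref{eq:intByParts} with the constant test function $\xi_n$ together with \eqref{eq:paraAbsOmegaN} gives $(\tilde u_n(t)\,|\,\tilde\xi_n)_{L^2(D)}=(u_n(0)\,|\,\xi_n)+\int_0^t\big[\langle f_n(s),\xi_n\rangle-a_n(s;u_n(s),\xi_n)\big]\,ds$, whose right-hand side again converges uniformly in $t$ (an $L^1$-convergent integrand plus a convergent constant) to the analogous expression for $u$; hence $\sup_t|(\tilde u_n(t)\,|\,\tilde\xi_n)-(\tilde u(t)\,|\,\tilde\xi)|\to 0$. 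Since $u\in W((0,T),V,V')\hookrightarrow C([0,T],L^2(\Omega))$, the trajectory $\{u(t):t\in[0,T]\}$ is compact in $L^2(\Omega)$; given $\epsilon>0$, I cover it by finitely many balls centred at $\xi^1,\dots,\xi^m\in H^1_0(\Omega)$ (using density of $H^1_0(\Omega)$ in $L^2(\Omega)$), so that for every $t$ there is $j(t)$ with $\|\tilde u(t)-\tilde\xi^{j(t)}\|_{L^2(D)}<\epsilon$. Splitting $(\tilde u(t)-\tilde u_n(t)\,|\,\tilde u(t))$ as $(\tilde u(t)-\tilde u_n(t)\,|\,\tilde u(t)-\tilde\xi^{j(t)})+(\tilde u(t)-\tilde u_n(t)\,|\,\tilde\xi^{j(t)})$, the first piece is $\leq C\epsilon$ by the uniform $L^2(D)$-bound on $\tilde u_n(t)-\tilde u(t)$, while the second, involving only the finitely many fixed $\tilde\xi^{j}$, is controlled by the uniform tested convergence above (after replacing $\tilde\xi^j$ by $\tilde\xi^j_n$, at cost $\|\tilde\xi^j-\tilde\xi^j_n\|_{L^2(D)}\to 0$ uniformly over the finite index set). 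This gives $\sup_t(\tilde u(t)-\tilde u_n(t)\,|\,\tilde u(t))\leq C'\epsilon$ for $n$ large.

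Finally, expanding $\|\tilde u_n(t)-\tilde u(t)\|^2_{L^2(D)}=\big(\|\tilde u_n(t)\|^2_{L^2(D)}-\|\tilde u(t)\|^2_{L^2(D)}\big)+2\,(\tilde u(t)-\tilde u_n(t)\,|\,\tilde u(t))_{L^2(D)}$ and taking the supremum over $t\in[0,T]$, both contributions tend to $0$ by the two preceding steps, so $\tilde u_n\to\tilde u$ in $C([0,T],L^2(D))$. I expect the delicate point to be precisely the uniform treatment of the moving cross term: reducing the $t$-dependent test element $\tilde u(t)$ to finitely many fixed ones, via compactness of the limit trajectory in $L^2(\Omega)$ and Mosco condition $(M1)$, is what prevents the argument from collapsing into the circular estimate above.
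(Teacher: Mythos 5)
Your proof is correct, and for the crucial step --- uniform-in-$t$ control of the cross term $(\tilde u(t)-\tilde u_n(t)\,|\,\tilde u(t))_{L^2(D)}$ --- it takes a genuinely different route from the paper. The paper handles this term by fixing $s\in[0,T]$, choosing a compact $K\subset\Omega$ and a cut-off $\phi\in C_0^\infty(\Omega)$, and invoking the capacity characterisation of Mosco condition $(M1)$ (Lemma \ref{lem:M1capH10}) to build cut-offs $\phi_n\in C_0^\infty(\Omega_n)$; it then tests the equations against the $t$-dependent, $n$-dependent functions $\phi_n\tilde u(t)$ and $\phi_n\tilde u_n(t)$ and controls the remainder near $\partial\Omega$ via \eqref{eq:normUOutsideK}, finishing with compactness of $[0,T]$. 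You instead exploit compactness of the trajectory $\{u(t):t\in[0,T]\}$ in $L^2(\Omega)$ to reduce the moving test element $\tilde u(t)$ to a finite $\epsilon$-net $\xi^1,\dots,\xi^m\in H^1_0(\Omega)$, and for each fixed $\xi^j$ you get uniform tested convergence from the energy identity \eqref{eq:intByParts} with a constant recovery test function $\xi^j_n$ supplied directly by $(M1)$, together with the strong convergence of Theorem \ref{th:strongConvSolDir} (which makes the integrands converge in $L^1((0,T))$, hence the integrals $\int_0^t$ converge uniformly). Your argument buys two things: it avoids the capacity lemma and the product construction $\phi_n\tilde u\in H^1_0(\Omega_n)$ altogether, using only the abstract Mosco condition $(M1)$; and it is insensitive to the Dirichlet structure, so it would transfer essentially verbatim to the Neumann case, where the paper instead develops a separate smooth-approximation argument (Lemma \ref{lem:M1ddreplaceNeu} and Theorem \ref{th:unifConvSolNeu}) that is in fact the closest analogue of your finite-net idea. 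The paper's cut-off approach, by contrast, yields more explicit local information about where mass is lost near $\partial\Omega_n$, which is in the spirit of the capacity-based domain conditions it cites. All the individual steps you use --- the energy identities, the $L^1$ bound $\sup_t|\int_0^t g_n|\le\|g_n\|_{L^1}$, the uniform $L^2(D)$ bound \eqref{eq:constM0}, and the final expansion of $\|\tilde u_n(t)-\tilde u(t)\|^2_{L^2(D)}$ --- check out.
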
 
\begin{proof}
We notice from the proof of Theorem \ref{th:strongConvSolDir} that 
\begin{displaymath}
 \int_0^t \|\tilde u_n(s) - \tilde w_n(s)\|^2_{H^1(D)} \;ds \rightarrow 0
\end{displaymath}
uniformly with respect to $t \in [0,T]$. Indeed, by \eqref{eq:dNDir} 
\begin{displaymath}
 \int_0^t \|\tilde u_n(s) - \tilde w_n(s)\|^2_{H^1(D)} \;ds \leq \alpha^{-1} d_n(T)
\end{displaymath}
for all $n \in \mathbb N$ and for all $t \in [0,T]$. Moreover, it is clear that
\eqref{eq:wUnsWnDir}, \eqref{eq:sWnWnDir} and \eqref{eq:limUnPlusAnDir} hold 
uniformly on $[0,T]$. It remains to show uniform convergence of \eqref{eq:limUnNormDir}.

Fix $s \in [0,T]$. For $\epsilon > 0$ arbitrary, we choose a compact set $K \subset \Omega$
such that $\|u(s)\|_{L^2(\Omega \backslash K)} \leq \epsilon/2$. 
Since $u \in C([0,T],L^2(\Omega))$, there exists $\eta > 0$ only depending on $\epsilon$
such that $\|u(t) - u(s) \|_{L^2(\Omega)} \leq \epsilon/2$ for all 
$t \in (s-\eta, s+\eta) \cap [0,T]$. It follows that
\begin{equation}
\label{eq:normUOutsideK}
 \|u(t)\|_{L^2(\Omega \backslash K)} 
\leq \|u(t) - u(s) \|_{L^2(\Omega)} + \|u(s)\|_{L^2(\Omega \backslash K)}
\leq \epsilon/2 + \epsilon/2 = \epsilon,
\end{equation}
for all $t \in (s-\eta, s+\eta) \cap [0,T]$.
We next choose a cut-off function $\phi \in C^{\infty}_0(\Omega)$ such that 
$0 \leq \phi \leq 1$ and $\phi =1$ on $K$.  
Since $H^1_0(\Omega_n)$ converges to
$H^1_0(\Omega)$ in the sense of Mosco, we have
from Lemma \ref{lem:M1capH10} that
$ \text{cap}(\text{supp}(\phi) \cap \Omega_n^c) \rightarrow 0$. 
By definition of capacity, there exists a sequence $\xi_n \in C^{\infty}_0(\Omega)$
such that $0 \leq \xi_n \leq 1$, $\xi_n = 1$ on a neighborhood of 
$\text{supp}(\phi) \cap \Omega_n^c$ and 
$\| \xi_n\|_{H^1(\mathbb R^N)} \leq \text{cap}(\text{supp}(\phi) \cap \Omega_n^c) + 1/n$.
Define $\phi_n := (1-\xi_n) \phi$. We have that $\phi_n \in C^{\infty}_0(\Omega_n)$ and
$\phi_n \rightarrow \phi$ in $L^2(D)$.
Consider 
\begin{equation}
\label{eq:unifUnUExpandDir}
\begin{aligned}
&\left |(\tilde u_n(t) -\tilde u(t) |\tilde u(t))_{L^2(D)} \right | \\
&\leq \left | (\tilde u_n(t)|\phi_n \tilde u(t))_{L^2(D)} - (\tilde
  u(t)| \phi \tilde u(t))_{L^2(D)} \right | \\
& \quad + \left | (\tilde u_n(t)|(1-\phi_n) \tilde u(t))_{L^2(D)} -
  (\tilde u(t)| (1-\phi) \tilde u(t))_{L^2(D)} \right |  \\
& \leq \left | (\tilde u_n(t)|\phi_n \tilde u(t))_{L^2(D)} - (\tilde
  u(t)| \phi \tilde u(t))_{L^2(D)} \right |  \\
& \quad  + \left | (\tilde u_n(t) -\tilde u(t) |(1-\phi_n) \tilde u(t))_{L^2(D)} \right | 
   + \left |(\tilde u(t)| (\phi - \phi_n) \tilde u(t))_{L^2(D)} \right |.\\
\end{aligned}
\end{equation}
We prove that each term on the right of \eqref{eq:unifUnUExpandDir} is uniformly small for
$t \in (s-\eta, s+\eta) \cap [0,T]$ if n is sufficiently large. 
For the first term, applying integration by parts formula \eqref{eq:intByParts} and 
the definition of variational solutions, we obtain  
\begin{equation}
\label{eq:firstTermUnif}
\begin{aligned}
& (\tilde u_n(t)|\phi_n \tilde u(t))_{L^2(D)}  \\
&= (u_{0,n}|\phi_n u_0)_{L^2(D)} + \int_0^t \langle u_n'(s), \phi_n \tilde u(s) \rangle \;ds  
    + \int_0^t \langle u'(s), \phi_n \tilde u_n(s) \rangle \;ds \\  
&= (u_{0,n}|\phi_n u_0)_{L^2(D)} + \int_0^t \langle f_n(s), \phi_n \tilde u(s) \rangle \;ds  
    - \int_0^t a_n(s; u_n(s), \phi_n \tilde u(s)) \;ds  \\
   &\quad + \int_0^t \langle f(s), \phi_n \tilde u_n(s) \rangle \;ds 
    - \int_0^t a(s; u(s), \phi_n \tilde u_n(s)) \;ds.\\
\end{aligned}
\end{equation}
It can be easily verified using Dominated Convergence Theorem that 
$\phi_n \tilde u \rightarrow \phi \tilde u$ in $L^2((0,T),L^2(D))$. Moreover,
\begin{displaymath}
\begin{aligned}
&\int_0^T \|\phi_n \tilde u_n(t) - \phi \tilde u(t)\|^2_{L^2(D)} \;dt \\
&\leq \int_0^T \|\phi_n \tilde u(t) - \phi \tilde u(t)\|^2_{L^2(D)} \;dt
 + \int_0^T \|\phi_n \tilde u_n(t) - \phi_n \tilde u(t)\|^2_{L^2(D)} \;dt \\
&\leq \int_0^T \|\phi_n \tilde u(t) - \phi \tilde u(t)\|^2_{L^2(D)} \;dt
 + \|\phi_n\|^2_{\infty} \int_0^T \| \tilde u_n(t) -  \tilde u(t)\|^2_{L^2(D)} \;dt. \\
\end{aligned}
\end{displaymath}  
It follows also that $\phi_n \tilde u_n \rightarrow \phi \tilde u$ in $L^2((0,T),L^2(D))$.
Taking into consideration that $\tilde u_{0,n} \rightarrow \tilde u_0$ and 
$\tilde f_n \rightarrow \tilde f$, we conclude form \eqref{eq:firstTermUnif} that 
\begin{equation}
\label{eq:firstTUnif2}
(\tilde u_n(t)|\phi_n \tilde u(t))_{L^2(D)} \rightarrow (\tilde u(t)|\phi \tilde u(t))_{L^2(D)}
\end{equation}
uniformly with respect to $t \in [0,T]$. 
For the last term on the right of \eqref{eq:unifUnUExpandDir}, 
applying a similar argument as above, we write
\begin{displaymath}
 (\tilde u(t)|\phi_n \tilde u(t))_{L^2(D)}  
= (u_0|\phi_n u_0)_{L^2(D)} + 2 \int_0^t \langle u'(s), \phi_n \tilde u(s) \rangle \;ds.  
\end{displaymath}
We conclude that
\begin{equation}
\label{eq:lastTUnif}
(\tilde u(t)|\phi_n \tilde u(t))_{L^2(D)} \rightarrow (\tilde u(t)|\phi \tilde u(t))_{L^2(D)}
\end{equation} 
uniformly with respect to $t \in [0,T]$. 
Finally, for the second term on the right of \eqref{eq:unifUnUExpandDir}, we notice that
$0 \leq 1- \phi_n \leq 1$ on $\Omega$ and $1- \phi_n  = 1-(1-\xi_n)\phi
= \xi_n$ on $K$.
Moreover, using \eqref{eq:boundOfSol} and the assumption that
$\tilde u_{0,n} \rightarrow \tilde u_0$ and $\tilde f_n \rightarrow \tilde f$, 
there exists a constant $M_0 > 0$ such that
\begin{equation}
\label{eq:constM0}
 \|\tilde u_n(t)\|_{L^2(D)}, \|\tilde u(t)\|_{L^2(D)} \leq M_0,
\end{equation}
for all $t \in [0,T]$.
Hence, by Cauchy-Schwarz  inequality and \eqref{eq:normUOutsideK},
\begin{equation}
\label{eq:secondTUnif1}
\begin{aligned}
 \left | (\tilde u_n(t) -\tilde u(t) |(1-\phi_n) \tilde u(t))_{L^2(D)} \right | 
 &\leq \|\tilde u_n(t) - \tilde u(t)\|_{L^2(D)} \|(1- \phi_n) \tilde u(t)\|_{L^2(D)} \\
 &\leq 2M_0 \left ( \|u(t)\|^2_{L^2(\Omega \backslash K)} + \|\xi_n u(t) \|_{L^2(K)} \right ) \\
 &\leq 2M_0 \left ( \epsilon + \|\xi_n u(t) \|_{L^2(K)} \right ), \\
\end{aligned}
\end{equation}
for all $t \in (s-\eta, s+\eta) \cap [0,T]$ and for all $n \in \mathbb N$.
Since $\xi_n \rightarrow 0$ in $L^2(D)$, a standard argument using Dominated Convergence 
Theorem implies that $\xi_n u(s) \rightarrow 0$ in $L^2(\Omega)$. Hence,
there exists $N_{s,\epsilon} \in \mathbb N$ such that $\|\xi_n u(s)\|_{L^2(\Omega)} \leq \epsilon/2$ for 
all $n \geq N_{s,\epsilon}$. Therefore,
\begin{displaymath}
\begin{aligned}
 \|\xi_n u(t)\|_{L^2(K)} 
 &\leq \|\xi_n u(s)\|_{L^2(K)} + \| \xi_n u(t) - \xi_n u(s)\|_{L^2(K)} \\
 &\leq \|\xi_n u(s)\|_{L^2(K)} + \| u(t) -  u(s)\|_{L^2(K)}   \\
 &\leq \epsilon/2 + \epsilon/2 = \epsilon,
\end{aligned}
\end{displaymath} 
 for all $t \in (s-\eta, s+\eta) \cap [0,T]$ and  for all $n \geq N_{s,\epsilon}$.
 It follows from \eqref{eq:secondTUnif1} that 
 \begin{equation}
 \label{eq:secondTUnif2}
 \left | (\tilde u_n(t) -\tilde u(t) |(1-\phi_n) \tilde u(t))_{L^2(D)} \right | 
 \leq 2M_0  ( \epsilon + \epsilon) 
 = 4M_0 \epsilon, 
 \end{equation}
 for all $t \in (s-\eta, s+\eta) \cap [0,T]$ and  for all $n \geq N_{s,\epsilon}$.
 Therefore, by \eqref{eq:unifUnUExpandDir}, \eqref{eq:firstTUnif2}, \eqref{eq:lastTUnif}, and
 \eqref{eq:secondTUnif2}, we conclude that there exist $\tilde N_{s,\epsilon} \in \mathbb N$ 
 and a positive constant $C$ such that
 \begin{displaymath}
 \left |(\tilde u_n(t) -\tilde u(t)|\tilde u(t))_{L^2(D)} \right | \leq C \epsilon,
 \end{displaymath}
 for all $t \in (s-\eta, s+\eta) \cap [0,T]$ and for all $n \geq \tilde N_{s,\epsilon}$.  

Finally, as $[0,T]$ is a compact interval and $\eta$ only depends on $\epsilon$, it follows that
$(\tilde u_n(t)|\tilde u(t))_{L^2(D)} \rightarrow (\tilde u(t)|\tilde
u(t))_{L^2(D)}$  uniformly with respect to
$t \in [0,T]$. 
 \end{proof}
\begin{remark}
\label{rem:NoNeedStrongConvDir} 
 In fact, we can conclude that $\tilde u_n \rightarrow \tilde u$ in $L^2((0,T),L^2(D))$  
 directly from the weak convergence of solutions in Theorem \ref{th:weakConvSolDir} 
 and the compactness result in \cite[Lemma 2.1]{MR1404388}. This means we only require 
 that $\tilde f_n |_{\Omega} \rightharpoonup f$ weakly in
 $L^2((0,T),L^2(\Omega))$ and $\tilde u_{0,n} |_{\Omega} \rightharpoonup  u_0$ weakly in $L^2(\Omega)$.
 Under the same assumptions, we can restate convergence result in Theorem \ref{th:unifConvSolDir} 
 as $\tilde u_n \rightarrow \tilde u$ in $C([\delta,T],L^2(D))$ for all $\delta \in (0,T]$, as appeared in
 \cite{MR1404388}. The reason we impose stronger assumptions on the initial data and the inhomogeneous terms
 is to avoid using \cite[Lemma 2.1]{MR1404388}, which is not applicable to Neumann problems,
 and illustrate a technique that can be applied to both boundary conditions.
\end{remark}
It is known that stability under domain perturbation of solution of elliptic equations subject to
Dirichlet boundary condition can be obtained from Mosco convergence of
$H^1_0(\Omega_n)$ to $H^1_0(\Omega)$ \cite{MR1955096}. Hence we can use the same
criterion on $\Omega_n$ and $\Omega$ to conclude the stability of
solutions of parabolic equations.  In particular, the conditions on domains given in \cite[Theorem 7.5]{MR1955096} 
implies convergence of solutions of non-autonomous parabolic equations \eqref{eq:paraOmegaN}  subject to
Dirichlet boundary condition under domain perturbation.
%
%
\subsection{Neumann problems}
\label{subsec:neuProb}

It is more complicated for Neumann problems because the trivial
extension by zero outside the domain of a function $u_n$ in $H^1(\Omega_n)$ does not
belong to $H^1(D)$. In addition, as we do not assume any smoothness of
domains, there is no smooth extension operator from $H^1(\Omega_n)$ to $H^1(D)$.
In order to study the limit of $u_n \in H^1(\Omega_n)$ when the domain
is perturbed, we embed the space $H^1(\Omega_n)$ into the following
space
\begin{displaymath}
 H^1(\Omega_n) \hookrightarrow L^2(D) \times L^2(D, \mathbb R^N)
\end{displaymath}
by
\begin{displaymath}
 v_n \mapsto (\tilde v_n, \tilde \nabla v_n),
\end{displaymath}
where $\tilde v_n(x) =v(x)$ if $x \in \Omega_n$ and 
$\tilde v_n(x) = 0$ if $x \in D \backslash \Omega_n$. Similarly,
$\tilde \nabla v_n(x) = \nabla v(x)$ if $x \in \Omega_n$ and 
$\tilde \nabla v_n(x) = 0$ if $x \in D \backslash \Omega_n$.
Note that $\tilde \nabla v_n$ is not the gradient of $\tilde v_n$
in the sense of distribution. By a similar embedding for $H^1(\Omega)$,
we can consider Mosco convergence of
\begin{displaymath}
K_n := \{ (\tilde v_n, \tilde \nabla v_n) \in L^2(D) \times
          L^2(D,\mathbb R^N) \mid v_n \in H^1(\Omega_n) \}
\end{displaymath}
to
\begin{displaymath}
K:= \{ (\tilde v, \tilde \nabla v) \in L^2(D) \times
          L^2(D,\mathbb R^N) \mid v \in H^1(\Omega) \}
\end{displaymath}
in $V:=L^2(D) \times L^2(D,\mathbb R^N)$.
In this case, $K_n$ and  $K$ are closed subspace of $V$.
For simplicity, we use the term $H^1(\Omega_n)$ converges 
in the sense of Mosco to $H^1(\Omega)$ for $K_n$ and $K$ above.
 
When dealing with parabolic equations, we regard the space
$L^2((0,T),V)$ as 
$L^2((0,T),L^2(D)) \times L^2((0,T),L^2(D,\mathbb R^N))$
via the isomorphism between them. Hence 
\begin{displaymath}
\begin{aligned}
L^2((0,T),K_n) &\equiv \{ (\tilde w_n, \tilde \nabla w_n)  
  \mid w_n \in L^2((0,T),H^1(\Omega_n) \} \\
 &\subset  L^2((0,T),L^2(D)) \times L^2((0,T),L^2(D,\mathbb R^N)),
\end{aligned}
\end{displaymath}              
and 
\begin{displaymath}
\begin{aligned}
L^2((0,T),K) &\equiv \{ (\tilde w, \tilde \nabla w) 
  \mid w \in L^2((0,T),H^1(\Omega) \} \\
 &\subset L^2((0,T),L^2(D)) \times L^2((0,T),L^2(D,\mathbb R^N)).
\end{aligned}
\end{displaymath}

As in the case of Dirichlet problem, we apply various Mosco conditions
from Theorem \ref{th:equiMosco} to prove that the variational
solution $u_n$ converges to the variational solution $u$.

\begin{theorem}
 \label{th:weakConvSolNeu}
  Suppose $\tilde f_n \rightarrow \tilde f$ in $L^2((0,T),L^2(D))$ and $\tilde u_{0,n} \rightarrow \tilde u_0$ in $L^2(D)$.
  If $H^1(\Omega_n)$ converges to
  $H^1(\Omega)$ in the sense of Mosco, then
  $\tilde u_n$ converges weakly to $\tilde u$ in $L^2((0,T),L^2(D))$ and
  $\tilde \nabla u_n$ converges weakly to $\tilde\nabla u$ in
  $L^2((0,T),L^2(D,\mathbb R^N))$.
\end{theorem}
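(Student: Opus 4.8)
The plan is to follow the architecture of the proof of Theorem \ref{th:weakConvSolDir}, but now carried out in the product space $V = L^2(D) \times L^2(D,\mathbb R^N)$ with $K_n, K$ the images of $H^1(\Omega_n), H^1(\Omega)$ under the embedding $v \mapsto (\tilde v, \tilde\nabla v)$. First I would establish a uniform bound: since $\tilde f_n \to \tilde f$ in $L^2((0,T),L^2(D))$ and $\tilde u_{0,n} \to \tilde u_0$ in $L^2(D)$, the data are bounded independently of $n$, so by the a priori estimate \eqref{eq:unifBoundSol} the norm $\|u_n\|_{W(0,T,V_n,V_n')}$ is uniformly bounded and in particular $u_n$ is bounded in $L^2((0,T),H^1(\Omega_n))$. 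Consequently the pair $(\tilde u_n, \tilde\nabla u_n)$ is bounded in $L^2((0,T),V)$, and by reflexivity I extract a subsequence (not relabelled) with $\tilde u_n \rightharpoonup w_0$ in $L^2((0,T),L^2(D))$ and $\tilde\nabla u_n \rightharpoonup w_1$ in $L^2((0,T),L^2(D,\mathbb R^N))$.

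The key structural step is to invoke Mosco condition $(M2')$ from Theorem \ref{th:equiMosco}: the weak limit $(w_0,w_1)$ must lie in $L^2((0,T),K)$, which means there is a single function $w \in L^2((0,T),H^1(\Omega))$ with $w_0 = \tilde w$ and $w_1 = \tilde\nabla w$. This is exactly where the Neumann case departs from the Dirichlet case: trivial extension by zero does not preserve the $H^1$ structure, so the weak limit of the extended gradients is not \emph{a priori} the gradient of the weak limit of the functions; Mosco convergence of $H^1(\Omega_n)$ to $H^1(\Omega)$ supplies this identification for free.

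It then remains to show $w = u$. I would fix $\xi \in H^1(\Omega)$ and $\phi \in \mathscr D([0,T))$, use Mosco condition $(M1)$ to pick $\xi_n \in H^1(\Omega_n)$ with $(\tilde\xi_n,\tilde\nabla\xi_n) \to (\tilde\xi,\tilde\nabla\xi)$ strongly in $V$, and write the weak formulation \eqref{eq:solFormula} for $u_n$ with test function $\xi_n$. Rewriting each integral over $\Omega_n$ as an integral over $D$ in terms of $(\tilde u_n,\tilde\nabla u_n)$ and the bounded coefficients, every term in the bilinear form is a pairing of a weakly convergent factor (built from $u_n$) against a strongly convergent factor (built from $\xi_n$, the fixed coefficients and $\phi$); the time-derivative term $\int_0^T (\tilde u_n(t)|\tilde\xi_n)_{L^2(D)}\phi'(t)\,dt$ and the forcing and initial terms converge because $\tilde f_n \to \tilde f$, $\tilde u_{0,n} \to \tilde u_0$ and $\tilde\xi_n \to \tilde\xi$ strongly. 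Passing to the limit yields \eqref{eq:solFormula} for the limit problem on $\Omega$ with $V = H^1(\Omega)$, so $w$ is a variational solution of \eqref{eq:paraAbsOmega}. By uniqueness in Theorem \ref{th:ExiUniParEq}, $w = u$, whence $(\tilde u, \tilde\nabla u) = (w_0,w_1)$; since the limit is independent of the subsequence, the whole sequence converges weakly as claimed.

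The step I expect to be the main obstacle is the careful weak--strong passage in the bilinear form. For each coefficient block one must verify that the factor assembled from $\xi_n$ — for instance $a_{ij}(\cdot,t)(\tilde\nabla\xi_n)_i\,\phi(t)$ — converges strongly in $L^2(D\times(0,T))$, using the $L^\infty$-boundedness of $a_{ij},a_i,b_i,c_0$ together with the strong $L^2$-convergence of $\tilde\xi_n$ and $\tilde\nabla\xi_n$, so that the product pairs correctly against the weakly convergent $\tilde u_n$ or $\tilde\nabla u_n$. Once this is in place, the remainder is a direct transcription of the Dirichlet argument into the product-space setting.
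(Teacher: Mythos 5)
Your proposal is correct and follows essentially the same route as the paper: uniform boundedness of $(\tilde u_n,\tilde\nabla u_n)$ from the a priori estimate \eqref{eq:unifBoundSol}, extraction of a weakly convergent subsequence, identification of the weak limit as an element of $L^2((0,T),H^1(\Omega))$ via Mosco condition $(M2')$, and then passage to the limit in the weak formulation \eqref{eq:solFormula} with test functions supplied by $(M1)$, concluding by uniqueness. The paper states this only in outline by reference to the Dirichlet proof, whereas you correctly spell out the weak--strong pairing in the bilinear form and the reason $(M2')$ is indispensable in the Neumann setting; no gap.
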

\begin{proof}
 By a similar argument as in the proof of Theorem \ref{th:weakConvSolDir},
 we have the uniform boundedness of  $(\tilde u_n, \tilde \nabla u_n)$  
 in $L^2((0,T),L^2(D)) \times L^2((0,T),L^2(D,\mathbb R^N))$.
 We can extract a subsequence (denoted again by $u_n$), such that
 $\tilde u_n \rightharpoonup w$ in $L^2((0,T),L^2(D))$ and
 $\tilde \nabla u_n \rightharpoonup (v_1,\ldots,v_N)$ in $L^2((0,T),L^2(D, \mathbb R^N))$.
 Mosco condition $(M2')$ (from Theorem \ref{th:equiMosco})
 implies that $w \in L^2((0,T),H^1(\Omega))$.

 To show that $w=u$, we let $\xi \in H^1(\Omega)$ and
 $\phi \in \mathscr D([0,T))$ and then use Mosco convergence of $H^1(\Omega_n)$ to
 $H^1(\Omega)$. In the same way as the proof of Theorem \ref{th:weakConvSolDir},
 we get \eqref{eq:limitIsWsolDir} holds for all $\xi \in H^1(\Omega)$ and
 all $\phi \in \mathscr D([0,T))$.
 Hence by the uniqueness of solution, $w=u$ in 
 $L^2((0,T),H^1(\Omega))$ and the whole  sequence converges.
\end{proof}
\begin{lemma}
\label{lem:L2weakLimUnNeu}
Suppose $\tilde f_n \rightarrow \tilde f$ in $L^2((0,T),L^2(D))$ and
$\tilde u_{0,n}\rightarrow \tilde u_0$ in $L^2(D)$. If $H^1(\Omega_n)$
converges to $H^1(\Omega)$ in the sense of Mosco, then
for each $t \in [0,T]$ we have $\tilde u_n(t)_{|_{\Omega}} \rightharpoonup  u(t)$ in $L^2(\Omega)$.
\end{lemma}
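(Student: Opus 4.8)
The plan is to mimic the proof of Lemma \ref{lem:L2weakLimUnDir} line by line, replacing $H^1_0$ by $H^1$ throughout and working with the embedding into $V = L^2(D)\times L^2(D,\mathbb R^N)$ rather than into $H^1(D)$. First I would fix $t\in[0,T]$ and invoke the energy estimate \eqref{eq:boundOfSol} (with $\lambda=0$, which may be assumed without loss of generality): since $\tilde u_{0,n}$ and $\tilde f_n$ are uniformly bounded, $\max_{t\in[0,T]}\|\tilde u_n(t)\|_{L^2(D)}\le M$ for some $M>0$. Hence $\tilde u_n(t)_{|_\Omega}$ is bounded in $L^2(\Omega)$, and along a subsequence (not relabelled) $\tilde u_n(t)_{|_\Omega}\rightharpoonup w$ weakly in $L^2(\Omega)$ for some $w$. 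The goal is to show $w=u(t)$.

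Next I would fix $\xi\in H^1(\Omega)$ and $\phi\in\mathscr D((0,t])$ and apply Mosco condition $(M1)$ for $H^1(\Omega_n)\to H^1(\Omega)$ to obtain $\xi_n\in H^1(\Omega_n)$ with $\tilde\xi_n\to\tilde\xi$ in $L^2(D)$ and $\tilde\nabla\xi_n\to\tilde\nabla\xi$ in $L^2(D,\mathbb R^N)$. Testing the equation for $u_n$ against $\phi(s)\xi_n$ and applying the integration by parts formula \eqref{eq:intByParts} on $[0,t]$ yields, exactly as in the Dirichlet case, the identity
\[
-\int_0^t (u_n(s)|\xi_n)\phi'(s)\,ds + \int_0^t a_n(s;u_n(s),\xi_n)\phi(s)\,ds = -(\tilde u_n(t)|\tilde\xi_n)_{L^2(D)}\phi(t) + (\tilde u_{0,n}|\tilde\xi_n)_{L^2(D)}\phi(0) + \int_0^t \langle f_n(s),\xi_n\rangle\phi(s)\,ds.
\]
Here the bilinear form $a_n(s;u_n(s),\xi_n)$ is rewritten as an integral over $D$ in terms of $(\tilde u_n,\tilde\nabla u_n)$ and $(\tilde\xi_n,\tilde\nabla\xi_n)$, which is legitimate because both factors are supported in $\Omega_n$ and the coefficients $a_{ij},a_i,b_i,c_0$ lie in $L^\infty(D\times(0,T))$.

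I would then let $n\to\infty$. The weak convergences $\tilde u_n\rightharpoonup\tilde u$ and $\tilde\nabla u_n\rightharpoonup\tilde\nabla u$ from Theorem \ref{th:weakConvSolNeu}, together with the strong convergences $\tilde\xi_n\to\tilde\xi$ and $\tilde\nabla\xi_n\to\tilde\nabla\xi$, make every bilinear term converge by the weak--strong pairing, giving $\int_0^t a(s;u(s),\xi)\phi(s)\,ds$ in the limit. For the time-boundary term I would split $(\tilde u_n(t)|\tilde\xi_n)_{L^2(D)} = (\tilde u_n(t)|\tilde\xi_n)_{L^2(\Omega)} + (\tilde u_n(t)|\tilde\xi_n)_{L^2(D\backslash\Omega)}$; the first part tends to $(w|\xi)_{L^2(\Omega)}$ by weak--strong pairing, while the second tends to $0$ because $\tilde\xi$ vanishes on $D\backslash\Omega$, so $\tilde\xi_n_{|_{D\backslash\Omega}}\to 0$ against the bounded sequence $\tilde u_n(t)$. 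The initial term is handled the same way using $\tilde u_{0,n}\to\tilde u_0$. The resulting limit identity coincides with the one satisfied by the variational solution $u$, except with $(w|\xi)_{L^2(\Omega)}$ in place of $(u(t)|\xi)_{L^2(\Omega)}$; hence $(w|\xi)_{L^2(\Omega)}=(u(t)|\xi)_{L^2(\Omega)}$ for every $\xi\in H^1(\Omega)$. Since $H^1(\Omega)$ is dense in $L^2(\Omega)$, $w=u(t)$, and as the limit is independent of the subsequence, the whole sequence $\tilde u_n(t)_{|_\Omega}$ converges weakly to $u(t)$.

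The main obstacle is exactly the decomposition over $\Omega$ and $D\backslash\Omega$. Unlike the Dirichlet case, the trivial extensions do not lie in $H^1(D)$, and the weak $L^2(D)$-limit of $\tilde u_n(t)$ may carry mass on $D\backslash\Omega$, so $w$ can only be identified on $\Omega$ by pairing against recovery sequences $\xi_n$ whose limiting extension vanishes off $\Omega$. Verifying that the off-$\Omega$ contributions of the time-boundary and initial inner products vanish, using the strong convergence $\tilde\xi_n\to\tilde\xi$ furnished by $(M1)$, is the one genuine point of departure from the Dirichlet argument.
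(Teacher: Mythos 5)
Your proposal is correct and follows essentially the same route as the paper, which simply states that the Dirichlet argument of Lemma \ref{lem:L2weakLimUnDir} carries over with $H^1_0$ replaced by $H^1$ and with the density of $H^1(\Omega)$ in $L^2(\Omega)$. Your explicit treatment of the pair embedding $(\tilde\xi_n,\tilde\nabla\xi_n)$ into $L^2(D)\times L^2(D,\mathbb R^N)$ and of the off-$\Omega$ contributions is exactly the verification the paper leaves implicit.
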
 
\begin{proof}
We use the same argument as in the proof of Lemma
\ref{lem:L2weakLimUnDir} with Mosco convergence of
$H^1_0(\Omega_n)$ to $H^1_0(\Omega)$ replaced by Mosco convegence of 
$H^1(\Omega_n)$ to $H^1(\Omega)$
and the fact that $H^1(\Omega)$ is also dense in $L^2(\Omega)$.
\end{proof}
\begin{remark}
\label{rem:NeuWeakInitialF}
As remarked in the case of Dirichlet problems, 
we only require that $\tilde f_n |_{\Omega} \rightharpoonup f$ weakly in
$L^2((0,T),L^2(\Omega))$ and $\tilde u_{0,n}|_{\Omega} \rightharpoonup  u_0$ weakly in $L^2(\Omega)$ to obtain
the conclusion of Theorem \ref{th:weakConvSolNeu} and Lemma \ref{lem:L2weakLimUnNeu}.
\end{remark}
We next show the strong convergence.
\begin{theorem}
\label{th:strongConvSolNeu}
Suppose $\tilde f_n \rightarrow \tilde f$ in $L^2((0,T),L^2(D))$ and
$\tilde u_{0,n}\rightarrow \tilde u_0$ in $L^2(D)$. If $H^1(\Omega_n)$
converges to $H^1(\Omega)$ in the sense of Mosco, then
$\tilde u_n$ converges strongly to $\tilde u$ in $L^2((0,T),L^2(D))$ 
and $\tilde \nabla u_n$ converges strongly to $\tilde\nabla u$
in $ L^2((0,T),L^2(D,\mathbb R^N))$.
\end{theorem}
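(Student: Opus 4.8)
The plan is to transcribe the energy-comparison argument of Theorem~\ref{th:strongConvSolDir}, replacing the single norm $\|\cdot\|_{H^1(D)}$ by the norm of the product space $V=L^2(D)\times L^2(D,\mathbb R^N)$ and substituting the Neumann versions of the auxiliary results. First I would invoke Theorem~\ref{th:weakConvSolNeu} to obtain the weak convergences $\tilde u_n\rightharpoonup\tilde u$ in $L^2((0,T),L^2(D))$ and $\tilde\nabla u_n\rightharpoonup\tilde\nabla u$ in $L^2((0,T),L^2(D,\mathbb R^N))$. Then, applying Mosco condition $(M1')$ from Theorem~\ref{th:equiMosco} (to the sets $K_n,K$ realised through the Neumann embedding $v\mapsto(\tilde v,\tilde\nabla v)$), I would produce a recovery sequence $w_n\in L^2((0,T),H^1(\Omega_n))$ with $(\tilde w_n,\tilde\nabla w_n)\to(\tilde u,\tilde\nabla u)$ strongly in $L^2((0,T),V)$.

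Next I would define, for $t\in[0,T]$,
\[
d_n(t) := \frac{1}{2}\|\tilde u_n(t)-\tilde u(t)\|^2_{L^2(D)}
 + \alpha \int_0^t \Big( \|\tilde u_n(s)-\tilde w_n(s)\|^2_{L^2(D)}
 + \|\tilde\nabla u_n(s)-\tilde\nabla w_n(s)\|^2_{L^2(D,\mathbb R^N)} \Big)\,ds,
\]
and use the coercivity \eqref{eq:biFormCoerOnN} (with $\lambda=0$) to bound the time integral by $\int_0^t a_n(s;u_n(s)-w_n(s),u_n(s)-w_n(s))\,ds$, giving an inequality analogous to \eqref{eq:dNleqDir}. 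The point here is that, since $\tilde v_n$ and $\tilde\nabla v_n$ vanish off $\Omega_n$, each form $a_n(s;\cdot,\cdot)$ may be rewritten as an integral over all of $D$ against the extended pairs, so the comparison quantity is genuinely controlled by the $V$-norm of $u_n-w_n$.

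The bulk of the work is to pass to the limit in each resulting term, exactly along the lines of \eqref{eq:wUnsWnDir}--\eqref{eq:limUnPlusAnDir}. The cross terms $\int_0^t a_n(s;u_n,w_n)\,ds$, $\int_0^t a_n(s;w_n,u_n)\,ds$ and the diagonal term $\int_0^t a_n(s;w_n,w_n)\,ds$ each tend to $\int_0^t a(s;u,u)\,ds$ by pairing the weakly convergent $u_n$-factors against the strongly convergent $w_n$-factors in the presence of the bounded coefficients; the mixed term $(\tilde u_n(t)|\tilde u(t))_{L^2(D)}\to\|u(t)\|^2_{L^2(\Omega)}$ follows from Lemma~\ref{lem:L2weakLimUnNeu}; and the energy identity obtained from \eqref{eq:intByParts}, combined with the strong convergences $\tilde u_{0,n}\to\tilde u_0$ and $\tilde f_n\to\tilde f$, gives the Neumann analogue of \eqref{eq:limUnPlusAnDir}. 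Assembling these limits yields $d_n(t)\to 0$ for every $t\in[0,T]$, whence $\tilde u_n(t)\to\tilde u(t)$ in $L^2(D)$ pointwise and, on taking $t=T$, the strong convergence of $(\tilde u_n,\tilde\nabla u_n)$ to $(\tilde u,\tilde\nabla u)$ in $L^2((0,T),V)$, which is exactly the claimed statement.

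The step I expect to be the main obstacle is the cross-term convergence. Unlike the Dirichlet case, $\tilde\nabla u_n$ is genuinely not the distributional gradient of $\tilde u_n$, so no $H^1(D)$ structure is available and the pair $(\tilde u_n,\tilde\nabla u_n)$ must be carried through every estimate; one must verify that the weak-times-strong products in $L^2(D\times(0,T))$ pass to the limit term by term against the $L^\infty$ coefficients. Once this bookkeeping is handled, the remainder is a direct transcription of the proof of Theorem~\ref{th:strongConvSolDir}.
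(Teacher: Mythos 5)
Your proposal is correct and follows essentially the same route as the paper's own proof: both transcribe the energy-comparison argument of Theorem~\ref{th:strongConvSolDir} with the same quantity $d_n(t)$ (your $L^2(D)$-norms of the extended differences coincide with the paper's $H^1(\Omega_n)$-norms since the extensions vanish off $\Omega_n$), the same recovery sequence from $(M1')$, the same limit passages \eqref{eq:wUnsWnDir}--\eqref{eq:limUnPlusAnDir}, and Lemma~\ref{lem:L2weakLimUnNeu} for the mixed term. No gaps.
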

\begin{proof}
The proof is similar to the one in Theorem \ref{th:strongConvSolDir}. 
We show some details here for the sake of completeness. By Theorem
\ref{th:weakConvSolNeu}, $(\tilde u_n, \tilde \nabla u_n)$ 
converges weakly to $(\tilde u, \tilde \nabla u)$ 
in $L^2((0,T),L^2(D)) \times L^2((0,T),L^2(D,\mathbb R^2))$.
Since $u \in L^2((0,T),H^1(\Omega))$, Mosco condition $(M1')$
(from Theorem \ref{th:equiMosco}) implies that
there exists $w_n \in L^2((0,T),H^1(\Omega_n))$
such that $\tilde w_n \rightarrow \tilde u$ in $L^2((0,T),L^2(D))$ and 
$\tilde \nabla w_n \rightarrow \tilde\nabla u$ 
in $L^2((0,T),L^2(D,\mathbb R^N))$.
For $t \in [0,T]$, we consider 
\begin{equation}
\label{eq:dNNeu}
 \begin{aligned}
  d_n(t) &= \frac{1}{2} \| \tilde u_n(t) - \tilde u(t) \|^2_{L^2(D)}
          + \alpha \int_0^t \| u_n(s) -w_n(s) \|^2_{L^2(\Omega_n)} \;ds \\
  &\quad + \alpha \int_0^t \| \nabla u_n(s) - \nabla w_n(s) \|^2_{L^2(\Omega_n, \mathbb R^N)} \;ds \\
         &= \frac{1}{2} \| \tilde u_n(t) - \tilde u(t) \|^2_{L^2(D)}
          + \alpha \int_0^t \| u_n(s) -w_n(s) \|^2_{H^1(\Omega_n)} \;ds.
 \end{aligned}
\end{equation}
By \eqref{eq:biFormCoerOnN} (with $\lambda =0$), we can show that $d_n$
satisfies \eqref{eq:dNleqDir} for all $n \in \mathbb N$.
It can be easily seen from the weak convergence of $(\tilde u_n, \tilde \nabla u_n)$ and the strong
convergence of $(\tilde w_n, \tilde \nabla w_n)$ to $(\tilde u, \tilde \nabla u)$ 
in $L^2((0,T),L^2(D)) \times L^2((0,T),L^2(D,\mathbb R^N))$ that
\eqref{eq:wUnsWnDir} and \eqref{eq:sWnWnDir} also hold. By using Lemma
\ref{lem:L2weakLimUnNeu} instead of Lemma \ref{lem:L2weakLimUnDir}, we obtain \eqref{eq:limUnNormDir}. 
Finally, \eqref{eq:limUnPlusAnDir} is also valid for Neumann problem.
Hence, $d_n(t) \rightarrow 0$ for all $t \in [0,T]$. This shows pointwise
convergence of $\tilde u_n(t)$ to $\tilde u(t)$ in $L^2(D)$.
Moreover, by taking $t=T$ we get
\begin{displaymath}
\begin{aligned}
 &\int_0^T \|\tilde u_n(s)-\tilde u(s)\|^2_{L^2(D)} \;ds  \\
 & \quad \leq \int_0^T \|\tilde u_n(s)-\tilde w_n(s)\|^2_{L^2(D)} \;ds
  + \int_0^T \|\tilde w_n(s)-\tilde u(s)\|^2_{L^2(D)} \;ds \\
 & \quad \rightarrow 0, 
\end{aligned}
\end{displaymath}
and
\begin{displaymath}
\begin{aligned}
 &\int_0^T \|\tilde \nabla u_n(s)-\tilde\nabla u(s)\|^2_{L^2(D,\mathbb R^N)} \;ds  \\
 & \quad \leq \int_0^T \|\tilde \nabla u_n(s)-\tilde\nabla w_n(s)\|^2_{L^2(D,\mathbb R^N)} \;ds
  + \int_0^T \|\tilde \nabla w_n(s)-\tilde \nabla u(s)\|^2_{L^2(D,\mathbb R^N)} \;ds \\
 & \quad \rightarrow 0.
\end{aligned}
\end{displaymath}
This proves the strong convergence $\tilde u_n \rightarrow \tilde u$
in $L^2((0,T),L^2(D))$ and  $\tilde \nabla u_n \rightarrow \tilde\nabla u$
in $ L^2((0,T),L^2(D, \mathbb R^N))$.
\end{proof}
Recall that we embed the space $K = H^1(\Omega)$ in $V = L^2(D) \times L^2(D, \mathbb R^N)$.   
If $v$ is a function in  $W((0,T), H^1(\Omega), H^1(\Omega)')$, then
$v' \in L^2((0,T), H^1(\Omega)')$. It is not always true that we can embed 
$v'(t) \in V' = L^2(D) \times L^2(D, \mathbb R^N)$ a.e. $t \in (0,T)$ and claim that
$v \in W((0,T),V,V') \cap L^2((0,T),K)$. 
However, a similar argument as in the proof 
of Theorem \ref{th:equiMosco} $(i) \Rightarrow (iii)$ for Mosco condition (M1") 
gives the following result. 
\begin{lemma}
\label{lem:M1ddreplaceNeu}
Suppose that $H^1(\Omega_n)$ converges to $H^1(\Omega)$ in the sense of Mosco,
If $w \in C^{\infty}([0,T],H^1(\Omega))$ then there exists $w_n \in C^{\infty}([0,T],H^1(\Omega_n))$
such that $\tilde w_n$ converges to $\tilde w$ in $C^{\infty}([0,T],L^2(D))$.
\end{lemma}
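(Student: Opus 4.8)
The plan is to reuse the construction behind Mosco condition $(M1")$ in the proof of Theorem \ref{th:equiMosco}, now applied to $V := L^2(D) \times L^2(D,\mathbb R^N)$ with $K_n$, $K$ the images of $H^1(\Omega_n)$, $H^1(\Omega)$ under $v \mapsto (\tilde v, \tilde \nabla v)$. This embedding is an isometry of $H^1(\Omega)$ onto the closed subspace $K$, so the hypothesis $w \in C^\infty([0,T],H^1(\Omega))$ gives that $t \mapsto (\tilde w(t), \tilde \nabla w(t))$ lies in $C^\infty([0,T],V) \cap C([0,T],K)$. I will construct $w_n \in C^\infty([0,T],H^1(\Omega_n))$ whose embedded curves converge in $C^\infty([0,T],V)$; projecting onto the first factor yields the asserted convergence $\tilde w_n \to \tilde w$ in $C^\infty([0,T],L^2(D))$ (and in fact $\tilde \nabla w_n \to \tilde \nabla w$ as well).

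First I would stretch in time via the map $S_\delta$ of \eqref{eq:stretchMap}, setting $w_\delta := w \circ S_\delta^{-1} \in C^\infty([-\delta,T+\delta],V) \cap C([-\delta,T+\delta],K)$; since $w$ is smooth and $S_\delta^{-1}$ tends to the identity in $C^\infty([0,T])$, the restriction of $w_\delta$ to $[0,T]$ converges to $w$ in $C^\infty([0,T],V)$ as $\delta \to 0$. Next, as Mosco convergence of $H^1(\Omega_n)$ to $H^1(\Omega)$ is precisely condition $(M1)$ for $K_n$ and $K$, Proposition \ref{prop:convCombUdelta} supplies $w_{\delta,n} \in A_{\delta,n}$, of the form $\sum_i \phi_i(t) v_{i,n}$ with $v_{i,n} \in K_n$, such that $w_{\delta,n}(t) \to w_\delta(t)$ in $V$ uniformly on $[-\delta,T+\delta]$. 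As $K_n$ is a subspace, each value $w_{\delta,n}(t)$ lies in $K_n$, so $w_{\delta,n}$ corresponds to a curve taking values in $H^1(\Omega_n)$.

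I would then mollify in time: for $j > 1/\delta$ the translate $\eta_{1/j,t}$ belongs to $\mathscr D((-\delta,T+\delta))$ for every $t \in [0,T]$, whence the restriction of $\eta_{1/j} \ast w_{\delta,n}$ to $[0,T]$ lies in $C^\infty([0,T],H^1(\Omega_n))$. Using the derivative identity $\frac{d^r}{dt^r}(\eta_{1/j} \ast w_{\delta,n}) = \eta_{1/j} \ast \frac{d^r}{dt^r} w_{\delta,n}$ with the uniform convergence from Proposition \ref{prop:convCombUdelta}, one gets $\eta_{1/j} \ast w_{\delta,n} \to \eta_{1/j} \ast w_\delta$ in $C^\infty([0,T],V)$ as $n \to \infty$, while $\eta_{1/j} \ast w_\delta \to w_\delta$ in $C^\infty([0,T],V)$ as $j \to \infty$. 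A diagonal extraction over the three limits $\delta \to 0$, $j \to \infty$, $n \to \infty$, exactly as in the proof of $(i) \Rightarrow (iii)$, then yields a single sequence $w_n \in C^\infty([0,T],H^1(\Omega_n))$ with the desired $C^\infty([0,T],V)$ convergence.

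The hard part is not any single estimate but the need to control the full $C^\infty$ topology, i.e.\ uniform convergence of every time-derivative on $[0,T]$; this is exactly why the convolution derivative identity and the uniform (rather than merely $L^2$) control from Proposition \ref{prop:convCombUdelta} are used, and why the stretching by $S_\delta$ is needed so the mollifier has room near the endpoints of $[0,T]$. That the convex combinations remain in $H^1(\Omega_n)$ after mollification is automatic since $K_n$ is a subspace, and assembling the triple limit into one sequence is the routine diagonal bookkeeping.
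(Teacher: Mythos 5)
Your proposal is correct and follows essentially the same route as the paper: the paper's proof likewise reduces to the argument for $(i)\Rightarrow(iii)$ in Theorem \ref{th:equiMosco}, using the stretching map $S_\delta$, Proposition \ref{prop:convCombUdelta} for uniform-in-$t$ approximation in $V=L^2(D)\times L^2(D,\mathbb R^N)$, mollification in time, and the observation that smoothness of $w$ upgrades the convergence of $w_\delta$ to the $C^\infty([0,T],\cdot)$ topology. The only cosmetic difference is that you spell out the diagonal extraction and the projection onto the first factor, which the paper leaves implicit.
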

\begin{proof}
We note that Proposition \ref{prop:convCombUdelta} gives uniform convergence of the 
approximation sequence in $V= L^2(D) \times L^2(D, \mathbb R^N)$.
The proof follows the same arguments as in the proof 
of Theorem \ref{th:equiMosco} $(i) \Rightarrow (iii)$. 
The only difference is that we assume here $w \in C^{\infty}([0,T],H^1(\Omega))$. Hence
the stretched function $w_{\delta} = w \circ S_{\delta}^{-1}$ belongs to 
$C^{\infty}([-\delta,T+\delta],H^1(\Omega))$.  We point out that, by using 
uniform continuity of the $k$-th order derivative $w^{(k)}$ on $[0,T]$,   
the restriction of $w_{\delta}$ on $[0,T]$ converges to $w$ in $C^{\infty}([0,T],H^1(\Omega))$.
This gives the required convergence in $C^{\infty}([0,T],L^2(D))$.
\end{proof}
  
Using the above lemma, we show in the next theorem that the solution $u_n$ of \eqref{eq:paraAbsOmegaN} indeed converges
uniformly with respect to $t \in [0,T]$.
\begin{theorem}
\label{th:unifConvSolNeu}
Suppose $\tilde f_n \rightarrow \tilde f$ in $L^2((0,T),L^2(D))$ and
$\tilde u_{0,n}\rightarrow \tilde u_0$ in $L^2(D)$. If $H^1(\Omega_n)$
converges to $H^1(\Omega)$ in the sense of Mosco, then
$\tilde u_n$ converges strongly to $\tilde u$ in $C([0,T],L^2(D))$.
\end{theorem}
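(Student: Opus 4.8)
The plan is to follow the scheme of Theorem \ref{th:unifConvSolDir}, reducing the claim to the uniform (in $t$) convergence of the single delicate term \eqref{eq:limUnNormDir}, and to replace the capacity cut-off $\phi_n$ used there — which is unavailable without boundary vanishing — by the time-regular approximants supplied by Lemma \ref{lem:M1ddreplaceNeu}.

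First I would recall from the proof of Theorem \ref{th:strongConvSolNeu} that the energy functional $d_n$ of \eqref{eq:dNNeu} satisfies $\tfrac12\|\tilde u_n(t)-\tilde u(t)\|^2_{L^2(D)}\le d_n(t)$ and obeys the upper bound \eqref{eq:dNleqDir}, so it suffices to prove $\sup_{t\in[0,T]}d_n(t)\to0$. Each of the four limits \eqref{eq:wUnsWnDir}, \eqref{eq:sWnWnDir}, \eqref{eq:limUnPlusAnDir}, \eqref{eq:limUnNormDir}, already established pointwise, must be upgraded to uniform convergence. The three terms \eqref{eq:wUnsWnDir}, \eqref{eq:sWnWnDir}, \eqref{eq:limUnPlusAnDir} reduce (using the strong convergence of $(\tilde w_n,\tilde\nabla w_n)$, the weak convergence of $(\tilde u_n,\tilde\nabla u_n)$, and the energy identity from \eqref{eq:intByParts}) to a constant plus an integral $\int_0^t g_n(s)\,ds$ with $g_n\to g$ in $L^1((0,T))$; there the estimate $\big|\int_0^t(g_n-g)\big|\le\int_0^T|g_n-g|$ gives uniformity for free. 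The whole difficulty is concentrated in \eqref{eq:limUnNormDir}, i.e.\ in proving $(\tilde u_n(t)|\tilde u(t))_{L^2(D)}\to\|u(t)\|^2_{L^2(\Omega)}$ uniformly in $t$.

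For this term I would argue by approximation in two stages. Given $\epsilon>0$, use the density of $C^{\infty}([0,T],H^1(\Omega))$ in $W((0,T),H^1(\Omega),H^1(\Omega)')$ (Lemma \ref{lem:densityOfWK} with $K=H^1(\Omega)$) together with the embedding $W\hookrightarrow C([0,T],L^2(\Omega))$ to choose $v\in C^{\infty}([0,T],H^1(\Omega))$ with $\sup_t\|u(t)-v(t)\|_{L^2(\Omega)}<\epsilon$; then apply Lemma \ref{lem:M1ddreplaceNeu} to obtain $v_n\in C^{\infty}([0,T],H^1(\Omega_n))$ with $\tilde v_n\to\tilde v$ in $C^{\infty}([0,T],L^2(D))$. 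Writing $(\tilde u_n(t)|\tilde u(t))_{L^2(D)}=(u_n(t)|v_n(t))_{L^2(\Omega_n)}+(\tilde u_n(t)|\tilde u(t)-\tilde v_n(t))_{L^2(D)}$, the remainder is bounded by $M_0\big(\|u(t)-v(t)\|_{L^2(\Omega)}+\|\tilde v(t)-\tilde v_n(t)\|_{L^2(D)}\big)$ via a uniform bound $\|\tilde u_n(t)\|_{L^2(D)}\le M_0$ as in \eqref{eq:constM0}, hence is $O(\epsilon)$ uniformly once $n$ is large. For the main part, since $v_n(t)\in H^1(\Omega_n)$ is now an admissible test function, the integration-by-parts identity \eqref{eq:intByParts} applied to the pair $(u_n,v_n)$ together with $u_n'=f_n-A_n u_n$ expresses $(u_n(t)|v_n(t))_{L^2(\Omega_n)}$ through the data, the bilinear form $a_n$, and $\langle v_n',u_n\rangle$; passing to the limit (strong convergence of $\tilde v_n,\tilde v_n',\tilde f_n$ and weak convergence of $(\tilde u_n,\tilde\nabla u_n)$) yields convergence to the analogous expression for $(u,v)$, uniformly in $t$ by the same $\int_0^t$-argument, and this expression equals $(u(t)|v(t))_{L^2(\Omega)}$ by integration by parts for $(u,v)$. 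Since $(u(t)|v(t))\to\|u(t)\|^2_{L^2(\Omega)}$ within $O(\epsilon)$ by the choice of $v$, the $\epsilon$-argument closes.

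The main obstacle will be exactly this uniform control of \eqref{eq:limUnNormDir}: in the Dirichlet case the cut-off $\phi_n\in C^{\infty}_0(\Omega_n)$ kept the test function inside $H^1_0(\Omega_n)$, whereas here $u(t)\in H^1(\Omega)$ is not a legitimate test function on $\Omega_n$ and admits no $H^1(D)$-extension, so the estimate hinges entirely on producing the time-smooth Mosco-approximating sequence $v_n$ of Lemma \ref{lem:M1ddreplaceNeu} and on checking that the approximation of $u$ by $v$ is uniform in $t$ (which is why regularity in time, not merely in space, is required). Once $\sup_t d_n(t)\to0$ is established, $\tilde u_n\to\tilde u$ in $C([0,T],L^2(D))$ follows at once.
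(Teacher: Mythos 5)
Your proposal is correct and follows essentially the same route as the paper: reduce to the uniform convergence of $(\tilde u_n(t)\mid\tilde u(t))_{L^2(D)}$, approximate $u$ by some $w\in C^{\infty}([0,T],H^1(\Omega))$ uniformly in $L^2(\Omega)$ via the density of smooth functions in $W((0,T),H^1(\Omega),H^1(\Omega)')$ and the embedding into $C([0,T],L^2(\Omega))$, produce the time-smooth Mosco approximants $w_n$ from Lemma \ref{lem:M1ddreplaceNeu}, and handle the main term through the integration-by-parts identity for the pair $(u_n,w_n)$. Your two-term decomposition with a split remainder is the same three pieces as the paper's \eqref{eq:unifNeu3T}, so there is nothing further to add.
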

\begin{proof}
As in the proof of Theorem \ref{th:unifConvSolDir}, it requires to show uniform
convergence of $(\tilde u_n(t)|\tilde u(t))_{L^2(D)} \rightarrow (\tilde
u(t)|\tilde u(t))_{L^2(D)}$.

Let $\epsilon > 0$ arbitrary. By a similar argument as in the proof Lemma \ref{lem:densityOfWK},
we have the density of $C^{\infty}([0,T], H^1(\Omega))$ in $W((0,T),H^1(\Omega), H^1(\Omega)')$.
Since the solution $u$ is in the space $W((0,T),H^1(\Omega), H^1(\Omega)')$, there exists 
$w \in C^{\infty}([0,T], H^1(\Omega))$ such that
\begin{displaymath}
\|w - u\|_{W((0,T),H^1(\Omega), H^1(\Omega)')} \leq \varepsilon.
\end{displaymath} 
As $W((0,T),H^1(\Omega), H^1(\Omega)')$ is 
continuously embedded in $C([0,T], L^2(\Omega))$, we can indeed choose 
$w \in C^{\infty}([0,T], H^1(\Omega))$ such that
\begin{equation}
\label{eq:wuLessEps}
\|w(t) - u(t)\|_{L^2(\Omega)} \leq \varepsilon,
\end{equation}
for all $t \in [0,T]$. 
By Lemma \ref{lem:M1ddreplaceNeu}, there exists $w_n \in C^{\infty}([0,T], H^1(\Omega_n))$
such that $\tilde w_n \rightarrow \tilde w$ in $C^{\infty}([0,T],L^2(D))$. 
We can write
\begin{equation}
\label{eq:unifNeu3T}
\begin{aligned}
 \left | (\tilde u_n(t)- \tilde u(t)|\tilde u(t))_{L^2(D)} \right | 
 &\leq \left | (\tilde u_n(t)|\tilde w_n(t))_{L^2(D)} - (\tilde u(t)|\tilde w(t))_{L^2(D)} \right | \\
 &\quad + \left | (\tilde u_n(t) - \tilde u(t)|\tilde u(t) -\tilde w(t))_{L^2(D)} \right | \\
 &\quad + \left | (\tilde u_n(t)|\tilde w(t) - \tilde w_n(t))_{L^2(D)} \right |, \\
\end{aligned}
\end{equation}
for all $n \in \mathbb N$.
Since $u_n$ is a solution of \eqref{eq:paraAbsOmegaN}, 
\begin{displaymath}
\begin{aligned}
 (\tilde u_n(t) |\tilde  w_n(t))_{L^2(D)} 
   &=  (\tilde u_{0,n} |\tilde w_n(0))_{L^2(D)}
    + \int_0^t \langle f_n(s), w_n(s) \rangle \;ds  \\
    &\quad + \int_0^t \langle w_n'(s), u_n(s) \rangle \;ds
    - \int_0^t a_n(s; u_n(s),w_n(s)) \rangle \;ds, \\
\end{aligned}
\end{displaymath}
for all $n \in \mathbb N$.
Taking into consideration that $\tilde u_{0,n} \rightarrow \tilde u_0$ and 
$f_n \rightarrow f$, we conclude that 
\begin{equation}
\label{eq:firstTNeu}
(\tilde u_n(t)|\tilde w_n(t))_{L^2(D)} \rightarrow (\tilde u(t)|\tilde w(t))_{L^2(D)}
\end{equation}
uniformly with respect to $t \in [0,T]$. Moreover, by
\eqref{eq:wuLessEps} and the uniform boundedness of solutions as in
\eqref{eq:constM0},
\begin{equation}
\label{eq:secondTNeu}
\begin{aligned}
&\left | (\tilde u_n(t) - \tilde u(t)|\tilde u(t) -\tilde w(t))_{L^2(D)} \right |  \\
&\leq \| \tilde u_n(t) - \tilde u(t)\|_{L^2(D)} \|\tilde u(t) -\tilde w(t)\|_{L^2(D)} \\
&\leq 2M_0 \epsilon, \\
\end{aligned}
\end{equation}
for all $t \in [0,T]$ and for all $n \in \mathbb N$. 
Finally, as $\tilde w_n \rightarrow \tilde w$ in
$C^{\infty}([0,T],L^2(D))$, there exists $N_{\epsilon} \in \mathbb N$ such that
\begin{displaymath}
\| \tilde w_n(t) - \tilde w(t) \|_{L^2(D)} \leq \epsilon,
\end{displaymath}
for all $t \in [0,T]$ and for all $n \geq N_{\epsilon}$. Hence,
\begin{equation}
\label{eq:lastTNeu}
\begin{aligned}
 \left | (\tilde u_n(t)|\tilde w_n(t) -\tilde w(t))_{L^2(D)} \right |
  &\leq \|\tilde u_n(t)\|_{L^2(D)} \| \| \tilde w_n(t) -\tilde w(t)\|_{L^2(D)} \\
  &\leq M_0 \epsilon, \\
 \end{aligned}
\end{equation}
for all $t \in [0,T]$ and for all $n \geq N_{\epsilon}$.
Therefore, by \eqref{eq:unifNeu3T} -- \eqref{eq:lastTNeu},
there exists $\tilde N_{\epsilon} \in \mathbb N$ 
and a positive constant $C$ such that 
\begin{displaymath}
 \left |(\tilde u_n(t) -\tilde u(t)|\tilde u(t))_{L^2(D)} \right | \leq C \epsilon,
 \end{displaymath}
 for all $t \in [0,T]$ and for all $n \geq \tilde N_{\epsilon}$. As $\epsilon >0$
 was arbitrary, this proves the required uniform convergence of 
 $(\tilde u_n(t)|\tilde u(t))_{L^2(D)} \rightarrow (\tilde u(t)|\tilde
 u(t))_{L^2(D)}$ with respect to
 $t \in [0,T]$.
\end{proof} 
We can use the same criterion on $\Omega_n$ and $\Omega$ as in Neumann elliptic
problems to conclude the stability of solutions of Neumann parabolic equations
under domain perturbation. 
In particular, for domains in two dimensional spaces, the conditions on domains given in \cite[Theorem 3.1]{MR1822408} implies
convergence of solutions of non-autonomous parabolic equations \eqref{eq:paraOmegaN} subject to Neumann boundary condition.
%
%
%
\begin{remark}
\label{rem:domainCrackNeu} 
 The assumptions on strong convergence of $\tilde u_{0,n}$ and $\tilde f_n$
 can be weaken if we impose some regularity of the domains.
 We give an example of domains $\Omega_n$ satisfying the \emph{cone condition} (see \cite[Section 4.3]{MR0450957}) 
 uniformly with respect to $n \in \mathbb N$.
   
 Let $N=2$ and let 
 \begin{displaymath}
  \begin{aligned}
   \Omega &:= \{ x \in \mathbb R^2 : |x| <1 \} \backslash \{(x_1,0): 0 \leq x_1 <1 \}, \\ 
   \Omega_n &:= \{ x \in \mathbb R^2 : |x| <1 \} \backslash \{(x_1,0): \delta_n \leq x_1 <1 \}, 
  \end{aligned}
 \end{displaymath}
 where $\delta_n \searrow 0$.  
 This example is an exterior perturbation of the domain, that is
 $\Omega \subset \Omega_{n+1} \subset \Omega_n$ for all $n \in \mathbb N$.
 It is easy to see that $\Omega$ and $\Omega_n$  satisfy the cone condition
 uniformly with respect to $n \in \mathbb N$, but $H^1(\Omega)$ and $H^1(\Omega_n)$ do not have
 the extension property.
 Moreover, these domains satisfy the conditions in \cite{MR1822408}. 
 Hence, $H^1(\Omega_n)$ converges to $H^1(\Omega)$ in the sense of Mosco. 
 Note that here we take $D$ to be the open unit disk center at $0$ in $\mathbb R^2$.
 In this example, we only need that $\tilde f_n |_{\Omega} \rightharpoonup f$
 in $L^2((0,T),L^2(\Omega))$ and $\tilde u_{0,n} |_{\Omega} \rightharpoonup u_0$ in $L^2(\Omega)$ to conclude the 
 convergence of solutions $\tilde u_n \rightarrow \tilde u$ in $C([\delta, T], L^2(D))$ for all $\delta \in (0,T]$.
 In addition, if $\tilde u_{0,n} \rightarrow \tilde u_0$ in $L^2(D)$, then the assertion holds for $\delta = 0$.

  To see this, we note from Lemma \ref{lem:L2weakLimUnNeu} 
  (taking Remark \ref{rem:NeuWeakInitialF} into account) that $\tilde u_n(t)_{|_{\Omega}} \rightharpoonup u(t)$
  in $L^2(\Omega)$ weakly for all $t \in [0,T]$. Since $u \in L^2((0,T),H^1(\Omega))$, 
  we have $u(t) \in H^1(\Omega)$ for almost everywhere $t \in (0,T)$.
  Fix now such $t \in (0,T)$. By the continuity of the solutions $u_n \in C([0,T],L^2(\Omega_n))$, for each $n \in \mathbb N$
  we can choose $\rho_n > 0$ such that 
  \begin{displaymath}
    \|u_n(s) -u_n(t) \|_{L^2(\Omega_n)} \leq \frac{1}{n}
  \end{displaymath} 
  for all $s \in (t-\rho_n, t+\rho_n) \cap (0,T)$.  As $u_n \in L^2((0,T),H^1(\Omega_n))$ we can choose 
  $t_n \in (t-\rho_n, t+\rho_n) \cap (0,T)$ such that $u_n(t_n) \in H^1(\Omega_n)$ for all $n \in \mathbb N$.
  For these choices of $t_n$, we have $ \|u_n(t_n) -u_n(t) \|_{L^2(\Omega_n)} \rightarrow 0$ as $n \rightarrow \infty$.
  It follows that $\tilde u_n(t_n)_{|_{\Omega}} \rightharpoonup u(t)$ in $L^2(\Omega)$ weakly.
  Since $\Omega \subset \Omega_n$ for all $n \in \mathbb N$, the restriction 
  $ u_{n}(t_n)_{|_{\Omega}}$  belongs to $H^1(\Omega)$ for all $n \in \mathbb N$. 
  Hence it follows from the weak convergence of 
  $\tilde u_n(t_n)_{|_{\Omega}} =  u_n(t_n)_{|_{\Omega}} \rightharpoonup u(t)$ in $L^2(\Omega)$ 
  that
  \begin{displaymath}
    \int_{\Omega}  \partial_j \big( u_{n}(t_n)_{|_{\Omega}} \big )  \phi dx 
   =  -\int_{\Omega} u_{n}(t_n)_{|_{\Omega}} \partial_j \phi  dx
   \rightarrow -\int_{\Omega} u(t) \partial_j \phi  dx
    =  \int_{\Omega} \partial_j u (t) \phi dx,  
  \end{displaymath}
  for all $\phi \in C^{\infty}_c(\Omega)$ for $j =1,2$. 
  This means $\nabla u_{n}(t_n)_{|_{\Omega}} \rightharpoonup \nabla u(t)$
  in $L^2(\Omega, \mathbb R^2)$. Thus, $ u_{n}(t_n)_{|_{\Omega}}$ is bounded in $H^1(\Omega)$. 
  As $\Omega$ is bounded and satisfies 
  the cone condition, we have from the Rellich-Kondrachov theorem 
  that the embedding $H^1(\Omega) \hookrightarrow L^2(\Omega)$ is compact (see \cite[Theorem 6.2]{MR0450957}).
  Therefore, $u_{n}(t_n)_{|_{\Omega}}$ has a subsequence which converges strongly in $L^2(\Omega)$. 
  Since we have a prior 
  knowledge of weak convergence $u_n(t_n)_{|_{\Omega}} \rightharpoonup u(t)$ in $L^2(\Omega)$, we conclude that the whole 
  sequence  $u_n(t_n)_{|_{\Omega}} \rightarrow u(t)$ in $L^2(\Omega)$ strongly. 
  By the choices of $t_n$, we conclude that $u_n(t)_{|_{\Omega}} \rightarrow u(t)$ in $L^2(\Omega)$ strongly.
  Since the above argument works for almost everywhere 
  $t \in (0,T)$, we deduce from the dominated convergence theorem that
  $\tilde u_n |_{\Omega} = u_n |_{\Omega} \rightarrow u$ in $L^2((0,T),L^2(\Omega))$ strongly. 
  As the cutting line is a set of measure zero in $\mathbb R^2$, 
  we have $\tilde u_n  \rightarrow \tilde u$ in $L^2((0,T),L^2(D))$. 
  By extracting a subsequence (indexed again by $n$), 
  we can choose $\delta$ arbitrarily closed to zero in $(0,T]$
  such that $\tilde u_n(\delta) \rightarrow \tilde u(\delta)$ in $L^2(D)$. 
  The required convergence in $C([\delta,T],L^2(D))$ follows   
  from the argument in the proof of Theorem \ref{th:strongConvSolNeu} and Theorem \ref{th:unifConvSolNeu}
  with the integration taken over $[\delta,T]$ instead of $[0,T]$ (see also the proof of \cite[Therem 3.5]{MR1404388}). 
\end{remark}

\section{Application in Parabolic Variational Inequalities}
\label{sec:appParVar}

In the previous section we have seen some applications of Theorem \ref{th:equiMosco}
when $K_n$ and $K$ are closed subspaces of $V$. In this section, we consider
the case when $K_n$ and $K$ are just closed and 
convex subsets of $V$. We show here a similar convergence properties
of solutions of parabolic variational inequalities.

Let $K_n, K$ be closed and convex subsets in $V$. 
For each $t \in (0,T)$, suppose $a(t; \cdot, \cdot)$ is a continuous 
bilinear form on $V$ satisfying \eqref{eq:biFormCont} and \eqref{eq:biFormCoer}.
For simplicity, we assume that $\lambda = 0$ in \eqref{eq:biFormCoer}.
We denote by $A(t)$ the linear operator induced by $a(t; \cdot, \cdot)$.
Let us consider the following parabolic variational inequalities.
Given $u_{0,n} \in K_n$ and $f_n \in L^2((0,T),V')$, we want to find $u_n$ such that
for a.e. $t \in (0,T)$, $u_n(t) \in K_n$ and 
\begin{equation}
\label{eq:parVarIneqKn}
 \left\{ 
  \begin{aligned}
   \langle u'(t), v-u(t) \rangle  + \langle A(t)u(t), v-u(t) \rangle 
     - \langle f_n(t) , v - u(t) \rangle &\geq 0, \quad \forall v \in K_n \\
   u(0) &= u_{0,n}.
   \end{aligned}
  \right .
\end{equation}
When $K_n, f_n$ and $u_{0,n}$ converge to $K, f$ and $u_{0}$, 
we wish to obtain convergence results of weak solution of 
\eqref{eq:parVarIneqKn} to the following limit inequalities.
\begin{equation}
\label{eq:parVarIneqKK}
 \left\{ 
  \begin{aligned}
   \langle u'(t), v-u(t) \rangle  + \langle A(t)u(t), v-u(t) \rangle 
     - \langle f(t) , v - u(t) \rangle &\geq 0, \quad \forall v \in K \\
   u(0) &= u_{0}.
   \end{aligned}
  \right .
\end{equation}
Throughout this section, we denote the weak solution of \eqref{eq:parVarIneqKn}
by $u_n$ and the weak solution of \eqref{eq:parVarIneqKK} by $u$. The notion of
our weak solutions is given in Definition \ref{def:weakSolIneq}.

\begin{theorem}
\label{th:boundedParVar}
Suppose $f_n \rightarrow f$ in $L^2((0,T),V')$, $u_{0,n} \rightharpoonup u_0$ in $V$ 
and $u_{0,n} \rightarrow  u_0$ in $H$. 
Then the sequence of weak solutions $u_n$ is bounded in $L^2((0,T),V)$.
\end{theorem}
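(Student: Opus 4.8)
The plan is to test the weak formulation \eqref{eq:weakSolParVar} (written with $K_n$, $f_n$ and $u_{0,n}$ in place of $K$, $f$ and $u_0$) against the \emph{constant} function $v(t) \equiv u_{0,n}$. Since $u_{0,n} \in K_n$, this $v$ belongs to $W((0,T),V,V') \cap L^2((0,T),K_n)$ with $v' = 0$ and $v(0) = u_{0,n}$, so it is admissible and the boundary term $\tfrac12\|v(0)-u_{0,n}\|_H^2$ vanishes. This choice is the crucial point: because $u_n$ is only a \emph{weak} solution we have no control on $u_n'$ in $L^2((0,T),V')$ and so cannot test with $u_n$ itself; the constant test function yields an energy-type estimate without ever differentiating $u_n$.

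With this substitution \eqref{eq:weakSolParVar} collapses to
\begin{displaymath}
\int_0^T \langle A(t)u_n(t), u_n(t)-u_{0,n} \rangle \;dt
 \leq \int_0^T \langle f_n(t), u_n(t)-u_{0,n} \rangle \;dt.
\end{displaymath}
Splitting both pairings and moving the cross terms to the right, I would bound the left-hand side below using coercivity \eqref{eq:biFormCoer} (with $\lambda=0$) by $\alpha\|u_n\|_{L^2((0,T),V)}^2$. The remaining three terms are estimated by the continuity bound \eqref{eq:biFormCont}, the Cauchy--Schwarz inequality in $L^2((0,T),V)$ paired with $L^2((0,T),V')$, and Young's inequality: namely $\int_0^T \langle A(t)u_n(t),u_{0,n}\rangle\;dt \leq M\sqrt{T}\,\|u_{0,n}\|_V\,\|u_n\|_{L^2((0,T),V)}$ and $\int_0^T\langle f_n(t),u_n(t)\rangle\;dt \leq \|f_n\|_{L^2((0,T),V')}\,\|u_n\|_{L^2((0,T),V)}$, while $\int_0^T\langle f_n(t),u_{0,n}\rangle\;dt$ is a bounded constant. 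Absorbing two terms of weight $\alpha/4$ into the left-hand side then gives
\begin{displaymath}
\frac{\alpha}{2}\,\|u_n\|_{L^2((0,T),V)}^2
 \leq C\Big(\|u_{0,n}\|_V^2 + \|f_n\|_{L^2((0,T),V')}^2\Big),
\end{displaymath}
with $C$ depending only on $M$, $\alpha$ and $T$.

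It then remains to observe that the right-hand side is bounded uniformly in $n$. The quantity $\|f_n\|_{L^2((0,T),V')}$ is bounded because $f_n\to f$ strongly, and $\|u_{0,n}\|_V$ is bounded because the weakly convergent sequence $u_{0,n}\rightharpoonup u_0$ in $V$ is norm-bounded by the uniform boundedness principle; note that only this boundedness, and not the full weak convergence nor the convergence in $H$, is needed for the present estimate. This furnishes the desired uniform bound on $\|u_n\|_{L^2((0,T),V)}$. I do not anticipate a genuine obstacle here: the only delicate point is recognising that the constant function $u_{0,n}$ is the right test function, precisely because it circumvents the missing regularity of $u_n'$.
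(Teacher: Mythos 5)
Your proof is correct and follows essentially the same route as the paper: both test the weak formulation with the constant admissible function $v(t)\equiv u_{0,n}$, which kills the $v'$ term and the boundary term, and then combine coercivity with the continuity bound and the boundedness of $(u_{0,n})$ in $V$ and of $(f_n)$ in $L^2((0,T),V')$. The only (cosmetic) difference is that the paper applies coercivity to the difference $u_n-v_n$ and divides, whereas you apply it to $u_n$ directly and absorb the cross terms by Young's inequality; your side remark that only norm-boundedness of $u_{0,n}$, not the convergence in $H$, is needed here is also accurate.
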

\begin{proof}
Let $v \in W((0,T),V,V') \cap L^2((0,T),K)$ be the constant function
defined by $v(t) := u_0$ for $t \in [0,T]$. Similarly, 
$v_n \in W((0,T),V,V') \cap L^2((0,T),K_n)$ defined by 
$v_n(t) := u_{0,n}$ for $t \in [0,T]$. It follows that $v_n
\rightharpoonup v$ in $L^2((0,T),V)$. 
Since $u_n$ is a weak solution of \eqref{eq:parVarIneqKn},
\begin{displaymath}
\begin{aligned}
    &\int_0^T   \langle A(t)u_n(t), u_n(t)-v_n(t) \rangle \;dt \\
    & \quad \leq \int_0^T \langle v_n'(t), v_n(t)-u_n(t) \rangle
     - \langle f_n(t), v_n(t)-u_n(t) \rangle  \;dt 
     + \frac{1}{2} \|v_n(0) -u_{0,n} \|^2_H  \\
    & \quad = - \int_0^T \langle f_n(t), v_n(t)-u_n(t) \rangle  \;dt.
\end{aligned}
\end{displaymath}
Thus, 
\begin{displaymath}
\begin{aligned}
    &\int_0^T   \langle A(t)u_n(t) - A(t)v_n(t), u_n(t)-v_n(t) \rangle \;dt \\
    &\quad \leq \int_0^T  \langle A(t)v_n(t), v_n(t) -u_n(t) \rangle
     - \langle f_n(t), v_n(t)-u_n(t) \rangle  \;dt \\
    &\quad \leq \| A(t)v_n -f_n \|_{L^2((0,T),V')} \|v_n -u_n\|_{L^2((0,T),V)}.
\end{aligned}
\end{displaymath}
By the coerciveness of $A(t)$, 
\begin{displaymath}
 \alpha \|u_n -v_n\|_{L^2((0,T),V)} \leq \|A(t)v_n -f_n \|_{L^2((0,T),V')}.
\end{displaymath} 
We conclude from the weak convergences of $v_n$ and $f_n$ that $u_n$ is
bounded in $L^2((0,T),V)$.
\end{proof}
\begin{theorem}
\label{th:weakConvSolParVar}
Suppose $f_n \rightarrow f$ in $L^2((0,T),V')$, $u_{0,n} \rightharpoonup u_0$ in $V$ 
and $u_{0,n} \rightarrow  u_0$ in $H$. If $K_n$ converges to $K$ in the sense of Mosco,
then the sequence of weak solutions $u_n$ converges weakly to $u$ in $L^2((0,T),V)$.  
\end{theorem}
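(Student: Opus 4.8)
The plan is to extract a weakly convergent subsequence, identify its limit as a weak solution of the limit problem via the Mosco machinery of Theorem \ref{th:equiMosco}, and then invoke uniqueness. By Theorem \ref{th:boundedParVar} the sequence $(u_n)$ is bounded in the reflexive space $L^2((0,T),V)$, so after passing to a subsequence (not relabelled) I may assume $u_n \rightharpoonup w$ weakly in $L^2((0,T),V)$ for some $w$. Since $u_n \in L^2((0,T),K_n)$ and $K_n \to K$ in the sense of Mosco, Theorem \ref{th:equiMosco} furnishes Mosco condition $(M2')$, whence $w \in L^2((0,T),K)$. It therefore suffices to show that $w$ satisfies the weak formulation \eqref{eq:weakSolParVar} of \eqref{eq:parVarIneqKK}: uniqueness of the weak solution then forces $w=u$, and since the limit is independent of the extracted subsequence, the whole sequence converges weakly to $u$.

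To verify \eqref{eq:weakSolParVar} for $w$, I would fix an arbitrary test function $v \in W((0,T),V,V') \cap L^2((0,T),K)$ and use Mosco condition $(M1'')$ from Theorem \ref{th:equiMosco} to produce competitors $v_n \in W((0,T),V,V') \cap L^2((0,T),K_n)$ with $v_n \to v$ strongly in $W((0,T),V,V')$; in particular $v_n' \to v'$ in $L^2((0,T),V')$ and, through the embedding $W((0,T),V,V') \hookrightarrow C([0,T],H)$, $v_n(0) \to v(0)$ in $H$. Using $v_n$ in the weak inequality \eqref{eq:weakSolParVar} satisfied by $u_n$ gives, for every $n$,
\begin{displaymath}
\int_0^T \langle v_n'(t), v_n(t)-u_n(t) \rangle + a(t;u_n(t),v_n(t)-u_n(t)) - \langle f_n(t), v_n(t)-u_n(t) \rangle \;dt + \tfrac{1}{2}\|v_n(0)-u_{0,n}\|^2_H \geq 0.
\end{displaymath}
Since $v_n-u_n \rightharpoonup v-w$ weakly in $L^2((0,T),V)$, the duality term converges to $\int_0^T \langle v'(t), v(t)-w(t)\rangle\,dt$ (strong–weak pairing of $L^2((0,T),V')$ with $L^2((0,T),V)$), the source term converges to $\int_0^T \langle f(t),v(t)-w(t)\rangle\,dt$ using $f_n \to f$ strongly, and $\tfrac{1}{2}\|v_n(0)-u_{0,n}\|^2_H \to \tfrac{1}{2}\|v(0)-u_0\|^2_H$ using $u_{0,n}\to u_0$ in $H$. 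By continuity \eqref{eq:biFormCont}, $\psi \mapsto \int_0^T a(t;\psi(t),v(t))\,dt$ is a bounded linear functional on $L^2((0,T),V)$, so the mixed term $\int_0^T a(t;u_n,v_n)\,dt$ converges to $\int_0^T a(t;w,v)\,dt$.

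The one term that resists passage to the limit by weak convergence alone is the quadratic term $-\int_0^T a(t;u_n(t),u_n(t))\,dt$, and this is where the main work lies. The plan is to use weak lower semicontinuity coming from coercivity: since $\lambda=0$ in \eqref{eq:biFormCoer}, expanding the nonnegative quantity $\int_0^T a(t;u_n-w,u_n-w)\,dt \geq \alpha\int_0^T\|u_n-w\|^2_V\,dt \geq 0$ by bilinearity and letting the two cross terms converge (again as bounded linear functionals evaluated on a weakly convergent argument) to $\int_0^T a(t;w,w)\,dt$ yields
\begin{displaymath}
\liminf_{n\to\infty} \int_0^T a(t;u_n(t),u_n(t))\,dt \geq \int_0^T a(t;w(t),w(t))\,dt,
\end{displaymath}
equivalently $\limsup_n\bigl(-\int_0^T a(t;u_n,u_n)\,dt\bigr) \leq -\int_0^T a(t;w,w)\,dt$; note that no symmetry of $a$ is needed. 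Taking $\limsup$ in the displayed inequality for $u_n$, where every term but the quadratic one converges, and recombining $\int_0^T a(t;w,v)\,dt - \int_0^T a(t;w,w)\,dt = \int_0^T \langle A(t)w(t),v(t)-w(t)\rangle\,dt$, reproduces exactly \eqref{eq:weakSolParVar} for $w$. Thus $w$ is a weak solution, so $w=u$ by uniqueness and the whole sequence converges. The essential obstacle is precisely this lower-semicontinuity step for the coercive quadratic form; all remaining limits are routine weak–strong pairings.
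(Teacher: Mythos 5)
Your proposal is correct, and the overall skeleton (boundedness, subsequence extraction, $(M2')$ to place the weak limit in $L^2((0,T),K)$, $(M1'')$ to build recovery test functions $v_n$, identification via uniqueness, subsequence principle) coincides with the paper's. Where you genuinely diverge is in the treatment of the quadratic term $\int_0^T a(t;u_n,u_n)\,dt$. You handle it by weak lower semicontinuity of the coercive form: expanding $0\le\int_0^T a(t;u_n-w,u_n-w)\,dt$ and passing to the limit in the two cross terms gives $\liminf_n\int_0^T a(t;u_n,u_n)\,dt\ge\int_0^T a(t;w,w)\,dt$, after which a $\limsup$ of the inequality for $u_n$ yields \eqref{eq:weakSolParVar} for $w$. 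The paper instead introduces, via Mosco condition $(M1')$, an auxiliary sequence $w_n\in L^2((0,T),K_n)$ with $w_n\to\kappa$ strongly, writes $\langle A(t)w_n,v_n-u_n\rangle=\langle A(t)u_n,v_n-u_n\rangle+\langle A(t)w_n-A(t)u_n,w_n-u_n\rangle+\langle A(t)w_n-A(t)u_n,v_n-w_n\rangle$, discards the middle term by monotonicity (coercivity with $\lambda=0$), and passes to the limit in the remaining terms by strong--weak pairing; this is a Minty-type translation trick. The two arguments are close cousins, but yours is slightly leaner: it never uses $(M1')$, only $(M2')$ and $(M1'')$, and it works directly with the weak limit $w$ rather than an approximating sequence in $K_n$. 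Your reliance on $\lambda=0$ is legitimate since the paper explicitly assumes this in Section 5 (for general $\lambda$ one would additionally need strong $L^2((0,T),H)$ convergence of $u_n$, which is not available at this stage). The only point you should tighten is the mixed term: $\int_0^T a(t;u_n,v_n)\,dt$ has both arguments varying, so split it as $\int_0^T a(t;u_n,v)\,dt+\int_0^T a(t;u_n,v_n-v)\,dt$ and use boundedness of $(u_n)$ together with $v_n\to v$ strongly to kill the second piece; as written you only justify the limit of the first piece. This is routine and does not affect the validity of the argument.
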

\begin{proof}
By Theorem \ref{th:boundedParVar}, we can extract a subsequence of $u_n$
(denoted again by $u_n$) such that $u_n \rightharpoonup \kappa$ in
$L^2((0,T),V)$. Since $u_n \in L^2((0,T),K_n)$, we
apply Mosco condition $(M2')$ (from Theorem \ref{th:equiMosco}) 
to deduce that the weak limit $\kappa \in L^2((0,T),K)$.
By the uniqueness of weak solution, it suffices to prove
that $\kappa$ satisfies \eqref{eq:weakSolParVar} (with $u$ replaced by
$\kappa$) in the definition of weak solution.

By Mosco condition $(M1')$, there exists $w_n \in L^2((0,T),K_n)$ such
that $w_n \rightarrow \kappa$ in $L^2((0,T),V)$. Let $v \in W((0,T),V,V')
\cap L^2((0,T),K)$. We again apply Theorem \ref{th:equiMosco} for Mosco
condition $(M1")$ to get a sequence of functions
$v \in W((0,T),V,V')\cap L^2((0,T),K_n)$ such that 
$v_n \rightarrow v$ in $W((0,T),V,V')$. For each $n \in \mathbb N$,
\begin{displaymath}
\begin{aligned}
 &\langle A(t)w_n(t), v_n(t)-u_n(t) \rangle \\
 &\quad= \langle A(t)u_n(t), v_n(t)-u_n(t) \rangle  
   + \langle A(t)w_n(t) - A(t)u_n(t), v_n(t) -u_n(t) \rangle \\
 &\quad= \langle A(t)u_n(t), v_n(t)-u_n(t) \rangle  
   + \langle A(t)w_n(t) - A(t)u_n(t), w_n(t) -u_n(t) \rangle \\
   &\quad \quad + \langle A(t)w_n(t) - A(t)u_n(t), v_n(t) -w_n(t) \rangle.
\end{aligned}
\end{displaymath}
Hence, by definition of weak solution on $K_n$ and coerciveness of $A(t)$,
\begin{displaymath}
\begin{aligned}
 &\int_0^T \langle v_n'(t), v_n(t)-u_n(t) \rangle 
  + \langle A(t)w_n(t), v_n(t)-u_n(t) \rangle \;dt  \\
 &\quad - \int_0^T \langle f_n(t), v_n(t)-u_n(t) \rangle \;dt 
  + \frac{1}{2} \|v_n(0) - u_{0,n}\|^2_H  \\
 &= \int_0^T \langle v_n'(t), v_n(t)-u_n(t) \rangle 
  + \langle A(t)u_n(t), v_n(t)-u_n(t) \rangle \;dt  \\
 &\quad - \int_0^T \langle f_n(t), v_n(t)-u_n(t) \rangle \;dt 
  + \frac{1}{2} \|v_n(0) - u_{0,n}\|^2_H  \\
 &\quad + \int_0^T \langle A(t)w_n(t) - A(t)u_n(t), w_n(t) -u_n(t) \rangle \;dt \\
 & \quad + \int_0^T \langle A(t)w_n(t) - A(t)u_n(t), v_n(t) -w_n(t) \rangle \;dt \\
 & \geq \int_0^T \langle A(t)w_n(t) - A(t)u_n(t), v_n(t) -w_n(t) \rangle \;dt,
\end{aligned}
\end{displaymath}
for all $n \in \mathbb N$. Letting $n \rightarrow \infty$,
\begin{displaymath}
\begin{aligned}
&\int_0^T \langle v'(t), v(t)-\kappa(t) \rangle 
  + \langle A(t)\kappa(t), v(t)-\kappa(t) \rangle \;dt  \\
 &\quad - \int_0^T \langle f(t), v(t)-\kappa(t) \rangle \;dt 
  + \frac{1}{2} \|v(0) - u_0\|^2_H 
  \geq 0.
\end{aligned}
\end{displaymath}
This implies $\kappa$ is a weak solution of \eqref{eq:parVarIneqKK} 
as required.
\end{proof}
We finally prove strong convergence of solutions.
\begin{theorem}
\label{th:strongConvSolParVar}
Suppose $f_n \rightarrow f$ in $L^2((0,T),V')$, $u_{0,n} \rightharpoonup u_0$ in $V$ 
and $u_{0,n} \rightarrow  u_0$ in $H$. If $K_n$ converges to $K$ in the sense of Mosco,
then the sequence of weak solutions $u_n$ converges strongly to $u$ in $L^2((0,T),V)$. 
\end{theorem}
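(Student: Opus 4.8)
The plan is to upgrade the weak convergence $u_n \rightharpoonup u$ in $L^2((0,T),V)$ provided by Theorem \ref{th:weakConvSolParVar} to strong convergence by exploiting the coercivity of $A(t)$ (with $\lambda = 0$) together with the weak formulation \eqref{eq:weakSolParVar} of the inequality. First I would record the regularity input that the unique weak solution $u$ of \eqref{eq:parVarIneqKK} is in fact a \emph{strong} solution, i.e.\ $u \in W((0,T),V,V')$ with $u(0) = u_0$; this follows from the existence theory cited after Definition \ref{def:weakSolIneq} since $u_0 \in K$. Because $u \in W((0,T),V,V') \cap L^2((0,T),K)$, Mosco condition $(M1")$ from Theorem \ref{th:equiMosco} supplies a sequence $v_n \in W((0,T),V,V') \cap L^2((0,T),K_n)$ with $v_n \to u$ strongly in $W((0,T),V,V')$; in particular $v_n \to u$ in $L^2((0,T),V)$, $v_n' \to u'$ in $L^2((0,T),V')$, and $v_n(0) \to u(0) = u_0$ in $H$.

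Using the coerciveness \eqref{eq:biFormCoer} with $\lambda = 0$, I would write
\[
\alpha\int_0^T\|u_n(t)-v_n(t)\|_V^2\,dt \le \int_0^T\langle A(t)u_n(t), u_n(t)-v_n(t)\rangle\,dt - \int_0^T\langle A(t)v_n(t), u_n(t)-v_n(t)\rangle\,dt.
\]
Since $v_n \to u$ strongly in $L^2((0,T),V)$ we have $A(\cdot)v_n \to A(\cdot)u$ strongly in $L^2((0,T),V')$, while $u_n - v_n \rightharpoonup 0$ weakly in $L^2((0,T),V)$; hence the last integral tends to $0$. For the first integral I would insert $v_n$ into the weak-solution inequality \eqref{eq:weakSolParVar} for $u_n$ (admissible because $v_n \in W((0,T),V,V') \cap L^2((0,T),K_n)$), rearranged as
\[
\int_0^T\langle A(t)u_n(t), u_n(t)-v_n(t)\rangle\,dt \le \int_0^T \langle v_n'(t), v_n(t)-u_n(t)\rangle - \langle f_n(t), v_n(t)-u_n(t)\rangle\,dt + \tfrac12\|v_n(0)-u_{0,n}\|_H^2.
\]

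Passing to the limit, the strong convergences $v_n' \to u'$ and $f_n \to f$ paired against the weak convergence $v_n - u_n \rightharpoonup 0$ make the two integrals on the right vanish, while $v_n(0) \to u_0$ and $u_{0,n} \to u_0$ in $H$ kill the boundary term. Therefore $\limsup_n \int_0^T\langle A u_n, u_n - v_n\rangle\,dt \le 0$, and combining the two estimates gives $\alpha\limsup_n\int_0^T\|u_n - v_n\|_V^2\,dt \le 0$. Thus $u_n - v_n \to 0$ in $L^2((0,T),V)$, and since $v_n \to u$ we conclude $u_n \to u$ strongly in $L^2((0,T),V)$.

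The step I expect to be the main obstacle is precisely the regularity input that the limit weak solution $u$ is a strong solution, $u \in W((0,T),V,V')$ with $u(0) = u_0$, which is what makes $u$ itself an admissible limit for the test functions $v_n$ (so that $v_n' \to u'$ and the boundary term closes). If one only knows $u \in L^2((0,T),K)$, the argument must instead be run with a general $v \in W((0,T),V,V') \cap L^2((0,T),K)$ and a sequence $w_n \to u$ from $(M1')$, yielding $\alpha\limsup_n\int_0^T\|u_n - w_n\|_V^2\,dt \le R(v)$, where $R(v) \ge 0$ is exactly the left-hand side of \eqref{eq:weakSolParVar} evaluated at $v$. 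One would then have to prove $\inf_v R(v) = 0$ by approximating $u$ with such $v$, and the delicate point there is controlling $\int_0^T\langle v'(t), v(t)-u(t)\rangle\,dt$ when $v \to u$ holds only in $L^2((0,T),V)$.
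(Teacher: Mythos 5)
Your argument has a genuine gap at exactly the point you flag as the ``main obstacle'': the claim that the weak solution $u$ of the limit inequality is a strong solution, $u \in W((0,T),V,V')$ with $u(0)=u_0$, is not available. The existence theorem cited after Definition \ref{def:weakSolIneq} only produces a \emph{weak} solution in $L^2((0,T),K)\cap L^\infty((0,T),H)$; upgrading to a strong solution requires extra hypotheses (regularity of $f$, $u_0$, compatibility with $K$) that are not assumed here. Without $u\in W((0,T),V,V')$ you cannot invoke $(M1'')$ to approximate $u$ itself by admissible test functions $v_n$, so your main chain of estimates does not start. Your fallback correctly isolates the remaining difficulty --- controlling $\int_0^T\langle v'(t),v(t)-u(t)\rangle\,dt$ and $\|v(0)-u_0\|_H^2$ when $v\to u$ only in $L^2((0,T),V)$ --- but leaves it unresolved, and that is precisely the nontrivial content of the theorem.

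The paper closes this gap with an elliptic (Yosida-type) regularization: for $\epsilon>0$ it defines $u_\epsilon$ by $\epsilon u_\epsilon' + u_\epsilon = u$, $u_\epsilon(0)=u_0$. Then $u_\epsilon(t)$ is an average of $u_0$ and the values $u(s)\in K$, so $u_\epsilon \in W((0,T),V,V')\cap L^2((0,T),K)$ by closedness and convexity of $K$, and $u_\epsilon\to u$ in $L^2((0,T),V)$ as $\epsilon\to 0$. The decisive features are that $u_\epsilon - u = -\epsilon u_\epsilon'$, whence
\begin{displaymath}
\int_0^T \langle u_\epsilon'(t), u_\epsilon(t)-u(t)\rangle\,dt = -\epsilon\int_0^T\|u_\epsilon'(t)\|_H^2\,dt \le 0,
\end{displaymath}
and that $\|u_\epsilon(0)-u_0\|_H^2=0$; these give the correct sign for exactly the two terms you could not control. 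One then applies $(M1'')$ to $u_\epsilon$ (which \emph{is} in $W((0,T),V,V')\cap L^2((0,T),K)$), tests the inequality for $u_n$ with the resulting $u_{\epsilon,n}$, passes to the limit first in $n$ and then in $\epsilon$, and obtains $\limsup_n\int_0^T\langle A(t)u_n - A(t)u, u_n-u\rangle\,dt\le 0$, after which coercivity yields strong convergence just as in your final step. So your overall strategy (coercivity plus a one-sided limit from the weak formulation) matches the paper's, but the regularization $u_\epsilon$ is the missing idea that makes it work without assuming $u$ is a strong solution.
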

\begin{proof}
By the coerciveness of $A(t)$, 
\begin{equation}
\label{eq:limInfSol}
 \liminf_{n \rightarrow \infty}
 \int_0^T \langle A(t)u_n(t) - A(t)u(t), u_n(t) -u(t) \rangle \;dt \geq  0. 
\end{equation}
For each $\epsilon > 0$, we define $u_{\epsilon}$ by 
\begin{displaymath}
 \begin{aligned}
 \epsilon u'_{\epsilon} + u_{\epsilon}& = u \\
                      u_{\epsilon} (0)& = u_0.
 \end{aligned}
\end{displaymath}
Then $u_{\epsilon} \in W((0,T),V,V') \cap L^2((0,T),K)$ and
$u_{\epsilon} \rightarrow u$ in $L^2((0,T),V)$ as $\epsilon \rightarrow
0$ (see in the proof of \cite[Theorem 2.3]{MR0296479}). 
For each $\epsilon > 0$, Mosco condition $(M1")$ 
(from Theorem \ref{th:equiMosco}) implies that there exists
$u_{\epsilon,n} \in W((0,T),V,V') \cap L^2((0,T),K_n)$ such that 
$u_{\epsilon,n} \rightarrow u_{\epsilon}$ in $W((0,T),V,V')$ as
$n \rightarrow \infty$. Since $u_n$ is a weak solution of \eqref{eq:parVarIneqKn},
\begin{displaymath}
\begin{aligned}
 &\int_0^T \langle A(t)u_n(t), u_n(t)-u(t) \rangle \;dt \\
 &\leq \int_0^T \langle v'_n(t),v_n(t)- u_n(t) \rangle \;dt
  -  \int_0^T \langle f_n(t),v_n(t)- u_n(t) \rangle \;dt \\
 &\quad + \frac{1}{2} \|v_n(0) - u_{0,n} \|^2_H 
  + \int_0^T \langle A(t)u_n(t),v_n(t)- u(t) \rangle \;dt,
\end{aligned}
\end{displaymath}
for all $v \in W((0,T),V,V') \cap L^2((0,T),K)$. In particular, taking
$v_n = u_{\epsilon,n}$, 
\begin{displaymath}
\begin{aligned}
 &\int_0^T \langle A(t)u_n(t), u_n(t)-u(t) \rangle \;dt \\
 &\leq \int_0^T \langle u'_{\epsilon,n}(t),u_{\epsilon,n}(t)- u_n(t) \rangle \;dt
  - \int_0^T \langle f_n(t),u_{\epsilon,n}(t)- u_n(t) \rangle \;dt \\
 &\quad + \frac{1}{2} \|u_{\epsilon,n}(0) - u_{0,n} \|^2_H 
  + \int_0^T \langle A(t)u_n(t),u_{\epsilon,n}(t)- u(t) \rangle \;dt.
\end{aligned}
\end{displaymath}
Letting $n \rightarrow \infty$, we obtain
\begin{displaymath}
\begin{aligned}
 &\limsup_{n \rightarrow \infty}
  \int_0^T \langle A(t)u_n(t), u_n(t)-u(t) \rangle \;dt \\
 &\leq \int_0^T \langle u'_{\epsilon}(t),u_{\epsilon}(t)- u(t) \rangle \;dt
  -  \int_0^T \langle f(t),u_{\epsilon}(t)- u(t) \rangle \;dt \\
 &\quad + \frac{1}{2} \|u_{\epsilon}(0) - u_0 \|^2_H 
  +  \int_0^T \langle A(t)u(t),u_{\epsilon}(t)- u(t) \rangle \;dt \\
 &= -\epsilon \int_0^T \|u'_{\epsilon}(t)\|^2_H \;dt
    -  \int_0^T \langle f(t),u_{\epsilon}(t)- u(t) \rangle \;dt \\
 &\quad + \int_0^T \langle A(t)u(t),u_{\epsilon}(t)- u(t) \rangle \;dt \\
 &\leq  - \int_0^T \langle f(t),u_{\epsilon}(t)- u(t) \rangle \;dt 
 + \int_0^T \langle A(t)u(t),u_{\epsilon}(t)- u(t) \rangle \;dt.
\end{aligned}
\end{displaymath}
This is true for any $\epsilon > 0$. Hence, by letting $\epsilon
\rightarrow 0$, 
\begin{displaymath}
 \limsup_{n \rightarrow \infty}
  \int_0^T \langle A(t)u_n(t), u_n(t)-u(t) \rangle \;dt \leq 0.
\end{displaymath}
On the other hand, the weak convergence of $u_n$ in Theorem
\ref{th:weakConvSolParVar} implies
\begin{displaymath}
 \lim_{n \rightarrow 0} 
  \int_0^T \langle A(t)u(t), u_n(t)-u(t) \rangle \;dt = 0.
\end{displaymath}
Thus,
\begin{equation}
\label{eq:limSupSol}
 \limsup_{n \rightarrow \infty}
 \int_0^T \langle A(t)u_n(t) - A(t)u(t), u_n(t) -u(t) \rangle dt \leq  0. 
\end{equation}
It follows from the coerciveness of $A(t)$, \eqref{eq:limInfSol} and
\eqref{eq:limSupSol} that
\begin{displaymath}
 \alpha \|u_n -u \|^2_{L^2((0,T),V)}  
 \leq  \int_0^T \langle A(t)u_n(t) - A(t)u(t), u_n(t) -u(t) \rangle \;dt
 \rightarrow 0,
\end{displaymath}
as $n \rightarrow \infty$. Therefore, $u_n \rightarrow u$ in $L^2((0,T),V)$.
\end{proof}
 
\section*{Acknowledgments} 
The author would like to thank D. Daners for helpful discussions and suggestions.
%

\bibliography{database}{}
\bibliographystyle{abbrv}

%

%
%
%

\end{document}